\documentclass[11pt]{article}
\usepackage[margin=1in]{geometry}
\usepackage{amsthm,amsmath,amsfonts,amssymb,mathtools}
\usepackage[numbers]{natbib}
\usepackage{enumitem}
\usepackage{booktabs}
\usepackage{tikz}
\usetikzlibrary{positioning,arrows.meta,calc}
\usepackage[colorlinks,citecolor=blue,urlcolor=blue]{hyperref}

\newcommand{\NN}{\mathbb{N}}
\newcommand{\PP}{\mathbb{P}}
\newcommand{\EE}{\mathbb{E}}
\newcommand{\RR}{\mathbb{R}}
\newcommand{\ZZ}{\mathbb{Z}}

\DeclareMathOperator{\Exp}{Exp}

\DeclareMathOperator{\GEM}{GEM}

\theoremstyle{plain}
\newtheorem{theorem}{Theorem}[section]
\newtheorem{proposition}[theorem]{Proposition}
\newtheorem{lemma}[theorem]{Lemma}
\newtheorem{corollary}[theorem]{Corollary}

\theoremstyle{definition}
\newtheorem{definition}[theorem]{Definition}
\newtheorem{example}[theorem]{Example}
\newtheorem{remark}[theorem]{Remark}

\title{The Dual-Population Benchmark Model}
\author{Alexander Gnedin\\
\small School of Mathematical Sciences, Queen Mary University of London\\
\small \texttt{a.gnedin@qmul.ac.uk}}
\date{}

\begin{document}
\maketitle

\begin{abstract}
Motivated by Fisher's and Hill's \cite{Fisher,Hill} ideas of ranking and pivotal quantities,
we introduce a  planar Poisson process (PPP) model in which the negative component \(\Pi_-\), acted upon by a choice operator {\rm C}, supplies a set of past benchmarks that divide the future points of the positive component \(\Pi_+\) into ranked categories. The benchmarks act as separators in an ordered paintbox while simultaneously acquiring the dual role of past standards established by the choice operator. The exceptional homogeneity properties of the PPP offer an infinitude of possibilities which absorb  many existing combinatorial structures
and enrich the toolbox of Bayesian distribution-free inference. A particular benchmark
generating mechanism considered here (the exponential race) amounts to the
 device of splitting into spacings of  nonhomogeneous order statistics.

On the methodological side,  the paper aims to highlight the role of order  as important characteristic of an exchangeable structure, complementary to the description in terms
of the components size. Moreover, we  advocate the viewpoint  that the order induced by a latent strength parameter is {\it intrinsically} inherited
from the distant-past temporal sampling  order, hence  the decoupled orders may  coexist within the framework of population duality without disturbing
size-biasedness of components and the full exchangeability within the sample.
This implies that the indistinguishability of components in the nonlinear CRP, sometimes regarded as nonexchangeability, does not in fact destroy exchangeability
and should be reconciled with the arrival ordering within the paradigm of  ordered structures.
\end{abstract}

\noindent{\it MSC 2020 subject classifications:} Primary 60C05, 60G09; secondary 60J70.

\noindent{\it Keywords:} infinite exchangeability, alphabet symmetry, planar Poisson point process, ranking, order, social choice, size-biasing, regeneration.

\section{Introduction}
\label{sec:intro}

Motivated by Fisher's and Hill's \cite{Fisher,Hill} ideas of ranking and pivotal quantities -- Fisher's fiducial thinking about ranks and order statistics providing the historical root, Hill's treatment of the ranks of continuous observations as pivotal, uniformly distributed independently of the unknown continuous law, the explicit technical development -- we introduce the dual-population benchmark model of this paper.

The boundary theory of Markov processes offers two views on the evolution. By the forward view, the initial state $A$ of the process directs, by the law of large numbers,
 the path to a terminal state $W$ on the boundary. By backward view, the destination $W$ is fixed and ties  the path by the terminal condition. The classic example is the 
exchangeable urn process  with some initial composition $A$, and $W$ appearing at the end of the journey as the vector of frequencies. Other way round, conditioning on $W$,
which acts now as parameter, outputs a process of independent   draws.

In this paper we take an intermediate  relativistic view on  population processes,   in which the present time follows the infinite past and precedes the infinite future. 
The past process has reached the present time and revealed all parameters
 \({W}\), that  act as a matrix  impacting the future population,
 that  remains exchangeable within  its own members. The background idea of the dual model to follow is that the past  choice operator 
created benchmarks which shape both size and ordering of the future generations through testing against the latent benchmarks.

Taking distribution-free, combinatorial view,   we will focus on the central   infinite-component structure: the random partition of \(\mathbb{Z}_{>0}\). The same distribution-free stance underlies much of statistical learning and pattern recognition practice \cite{merkov2011}, where a population is classified before its generating law is known. The source of the infinite-component 
exchangeability is the factorial code of a permutation, introduced in terms of relative ranks by Laisant \cite{Laisant1888}
and unlocked to infinity by Rubin in a short abstract on the secretary problem \cite{Rubin1966}. 
In  terms of the cycle insertion the code elevated
to the  virtual permutations of the representation theory of the infinite symmetric group ${\mathfrak S}_\infty$ \cite{BorodinOlshanski}.
For fixed \(n\) these twin codes are related by the fundamental bijection that splits total ranking row
at records and closes the broken parts in cycles, 
   but in the limit they completely disentangle.
Cycle insertion highlights the masses and is intrinsic to alphabet-symmetry: equal-size components are indistinguishable. The resulting preferential attachment has become  mainstream in the
work on distribution-free. The former interest in  the rank code \cite{Hill}
has drifted away. The dual-population model attempts to bring it back.
Retrospectively, the  historic development shows many lost opportunities;
 for instance, should Bayes admit that the predictive probability of the next failure after \(n-k\) successes equals \(2^k/(n+1)!\), we would have learned about the Karamata--Stirling 
trials and the Ewens sampling formula much earlier.

In the foundational paper Pitman \cite{PitmanPTRF} observed that arranging the  frequencies in the order of starters brings enormous analytic simplification in the representation
of the probability function as moments of the infinite-dimensional frequency vector, replacing infinite symmetric series inherent to Kingman's representation. 
Following this,   Kerov \cite{KerovSubordinators}, interpreted the connection as transition from Kingman branching of partitions to   the linear comb graph of Stanley \cite{StanleyFibonacciLattice}.
In the dual population picture the spine of temporally ranked ancestors stays in the past, affecting 
division of the future population in ranked categories, whose members  occupy their place 
in the new hierarchy by testing their standard latent variable against the inhomogeneous hierarchy of benchmarks.
The ranking mechanism is the factorial code of the permutation, which itself is a detailed version of the Law of Succession.

Exchangeability of ordered structures appears under different guises in many ranking procedures,
in extreme value-theory, sequential choice \cite{secretaryProblem},
homogeneous  and inhomogeneous stick-breaking
 \cite{  PitmanTran2015,   iksanov2016renewal, GnedinPitman2005, GnedinIksanovMarynych2010},
population models \cite{mailler}, Bayesian distribution-free inference
\cite{LijoiPruenster2010} and machine learning,\cite{pouxmedard2021powered}.
The PPP framework accommodates many of the familiar  models naturally and offers new. 
The model is distribution-free in the sense that any product of a finite measure and Lebesgue measure yields an equivalent ordering structure; we work throughout with exponential strengths merely for analytic convenience.  We may or may not  remain inside the gamma--beta universe, but in any case the algebra shifts from addition to comparison.

The analytical and combinatorial elements we use are mostly well known in combinatorial stochastic processes \cite{Pitman2006}, extreme-value theory \cite{resnick1987extreme}, algebraic combinatorics  \cite{StanleyEC1}, Bayesian inference and    social-choice theory \cite{Aleskerov}, although some elements may be new. A substantial element where we step away  from the tradition is that we 
assume   a realised scattered population together with a mechanism that selects distinguished past representatives by a social-choice operator. These representatives act as benchmark setters against which a future population is compared. The emphasis therefore shifts from updating frequencies to updating residual strengths of benchmarks.




The benchmark model begins with a hierarchy extracted from the past and tests future observations against this hierarchy. 
The model at this stage is neutral to the orientation of the order: reversing the strength scale is interpreted as the same structure, though shuffles can be readily introduced \cite{gnedin2006coherent}.
We assume that past and future populations are two interacting particle systems whose principal mode of interaction is comparison against a hierarchy of benchmarks extracted from the past by a social-choice operator. Each future particle is ordered \emph{locally} and the lowest-rank element is delegated to tests against the benchmarks (a construction common in the best-choice problem of optimal stopping \cite{gnedin2026memoryless}). The outcome updates both the rank of the particle and the residual strengths available to subsequent particles. The interaction is therefore neither collision, repulsion nor reinforcement, but pure comparison. The resulting sampled ordered structure is the primary observable; the exchangeable  {\it ordered}  partition (i.e. {\it composition structure})  is recovered in some form representing the ordered paintbox 
in the infinite-sample limit only.

\section{Preliminaries}
\label{sec:prelim}
\subsection{Boundary theory of Markov chains through The Law of Succession}

The boundary theory of Markov processes starts with, and is most simply illustrated by, the Bayes--Laplace random walk, reintroduced by Markov (1917) and P\'olya (1923) and widely known as the P\'olya urn.

The walk moves on the Pascal graph \(\ZZ_{\ge0}^2\), and its state \((a_1,a_2)\) at time \(n=a_1+a_2\) also carries the odds for the next move. The dynamics is additive: a step adds one of the two basis vectors, \((a_1,a_2)\mapsto(a_1,a_2)+e_1\) or \(+e_2\). Odds are a separate, projective view of the same lattice point: \((a_1,a_2)\) as odds names the ray through it, so that \((a_1,a_2)\) and \((2a_1,2a_2)\) are the same odds though different states of the walk. The object \(\boldsymbol0\) is the walk started at \((0,0)\), the symmetric Bernoulli walk fluctuating about the diagonal; starting instead at \((w_1,w_2)\) gives the Markov--P\'olya two-colour urn, fluctuating about the ray through \((w_1,w_2)\).

Exchangeability in this picture appears through Vershik-Kerov \cite{BorodinOlshanski} centrality or Dynkin  
sufficiency \cite{DiaconisFridmanSufficiencyIndianInst};
given a point of the discrete simplex  
$$
D_n=\{(a_1,a_2):a_1,a_2\ge0,\ a_1+a_2=n\}$$ is reached by the walk, the history is forgotten, that is, in retrospect
all \(\binom{a_1+a_2}{a_1}\) paths connecting $(a_1,a_2)$ to the root $(0,0)$ are equally likely; thus $D_n$ parametrises elementary $n$-step walks. The boundary of the graph, the weak closure of \(\bigcup_nD_n\), was identified, from different viewpoints, by Hausdorff and de Finetti with the unit interval. 
$$\Delta_1=\{(w_1,w_2)\in [0,1]^2: w_1+w_2=1\}.$$

Started at \((a_1,a_2)\), the walk converges almost surely -- by martingale convergence, the odds process being itself a bounded martingale -- to a random limit \(W\sim\mathrm{Beta}(a_1,a_2)\) on the boundary. The limit is genuinely random: with probability one \((a_1,a_2)\) is not proportional to \(W\), the sole exception being the start \((0,0)\), which has no ray of its own to compare against and whose limit is the
symmetric-in-the-odds nonrandom success probability
 \(1/2\), reached by the Bernoulli law of large numbers.

Looking backward, conditionally on the limit being proportional to normalised \((w_1,w_2)\), the walk is an i.i.d.\ Bernoulli sequence with parameter $w_1$ -- de Finetti's theorem in this coordinate system. This is the sense in which the boundary state of Bayes-Laplace walk is alphabet-asymmetric: the walk's dynamics never distinguished the two boxes, but its terminal state, conditioned on, produces a biased coin distinguishing them by frequency.

The duality carries forward as well as back. A boundary point \((w_1',w_2')\), not necessarily rational, becomes the initial state of a walk into the future, staying in the cone 
$$(w_1',w_2')+\ZZ_+^2$$ and following the same Law of Succession that produced it. The present is the state in which the past has already shaped the parameters of the particle being observed. The planar Poisson process model realises this with a finite, rather than infinite, past: a pure model when a single benchmark's parameters are known outright, a mixed model when only a group of benchmarks is known and which of them prevails in a comparison among themselves -- a tournament, in the sense of Section~\ref{sec:insertion} -- is not observed.

The uniform law on \(D_n\) -- an equal prior weight on every one of the \(n+1\) odds \(0:n,\,1:(n-1),\,\dots,\,n:0\) -- is the right starting assumption for a reason internal to alphabet symmetry itself, not an extraneous convenience. Among these odds there is no apriori reason to prefer one to another: this is exactly the assertion that the box-permutation group acts on our judgment the same way it acts on the state space, alphabet symmetry stated as an epistemic indifference rather than as a fact about dynamics. If this indifference is granted as the null hypothesis, the Law of Succession follows for the next observation without further assumption, by conditioning the uniform law on \(D_n\) on the observed outcomes and passing \(n\to\infty\): no separate device, such as Laplace's own argument by elimination over sequences of heads and tails, is required. The indifference is asserted entirely among lattice points of \(D_n\), with no ray or coordinate chosen at any stage; the projective object \(\Delta_1\) only appears afterward, as the limit of an already-fixed discrete construction, so the usual difficulty attached to a ``uniform'' continuous prior -- that uniformity depends on a choice of parametrisation -- does not arise.
\subsection{Two symmetric groups: balls and boxes}

Two distinct symmetric groups act on a partition of a population into types, on two distinct index sets. The group \(\mathfrak S_\infty\) acts on the population units -- the balls -- permuting which unit carries which value; invariance under this action is exchangeability in the classical, de Finetti sense. A second, independent group acts on the types -- the boxes, or tables in the language of the Chinese restaurant process -- permuting box names among themselves. Invariance under this second action is alphabet symmetry, in the information-theoretic sense that a box name carries no content on its own: entropy, counts, and every quantity of interest should be blind to which box is called what.

The two groups act on different sets, and neither is a special case of the other. Alphabet symmetry, stated this way, has nothing to do with ranking a priori: it is a statement about the box-permutation group, independent of any order imposed on the boxes. Ranking is what discharges it. Once a rule assigns each configuration a canonical order on its boxes -- discovery order, size order, or any rule intrinsic to the configuration -- a box permutation that would disturb this order is no longer a symmetry of the ranked object. Alphabet symmetry is used up in selecting the canonical representative of each orbit; this is a combinatorial move, choosing one representative per orbit, and carries no probabilistic content. What extensions of \(\mathrm{EPY}(\alpha,\theta)\) that tilt positions in the hierarchy \cite{GnedinPartitionsConstraints, GnedinBiasRecord, Favaro, mailler, LijoiPruenster2010} disturb is only this second symmetry, once a ranking has been fixed; the underlying sample remains fully exchangeable in the first, classical sense throughout. Ball exchangeability cannot be discharged by any choice of representative: the sample either arose from an exchangeable mechanism or it did not, and no relabelling convention manufactures the property after the fact.
The alphabet-symmetry of boxes, if desired, can be gained by the uniform symmetrisation, which forgets the order, and typically replaces a nice factored distribution formula by a sum over the suitable symmetric group.

We iterate the above in the form of  matrix of the relation  `partition of balls in boxes'.
Note that
partition of ${\mathbb Z}_{\geq 0}$  is an equivalence relation, best representable  by an incidence matrix with coordinates $({\rm ball, box})$, which is most usefully 
 randomised by  Rubin's marking device which multiplies each row by an independent uniform random variable, latent mark.  The full group acting on random partitions is the bisymmetric group ${\mathfrak S}_\infty\times {\mathfrak S}_\infty$ (equivalent to Gelfand pair see \cite{BorodinOlshanski}). Balls exchangeability is invariance under shuffling the rows, boxes alphabet-symmetry -- columns; in both cases it is sufficient to require invariance of distribution under adjacent transpositions only. 
Classification of  extreme (ergodic, indecomposable)  ({\it per definition bisymmetric}) partitions is the Kingman paintbox representation by a boundary point on Kingman's simplex.

Now, introducing an independent of marking {\it weak} order and leaving only
 row exchangeability results in a much larger class of {\it ordered} ball-exchangeable partitions classified by elements of  the space of closed subsets of $K\subset [0,1]$, whose complementary opens sets $[0,1]\setminus K$ provide `percentile intervals'  for blocks with positive frequency, while $K$ itself is a source 
of singleton dust: these are the ordered paintboxes from
 \cite{GnedinRepresentationofCompositionStructures}, alternatively characterised by the class of quasi-uniform distributions on $[0,1]$ that appear as probability integral transform of the general probability distribution on 
${\mathbb R}$. Every continuous distribution is mapped to ${\rm U}[0,1]$ and will output sampling dust
comprising Rubin's infinite permutation,
while random paths of subordinators or NTR ditributions will induce homogeneous or nonhomogeneous stick-breaking ordered partitions.

An equivalent natural choice of the ground measure is ${\rm Exp}(1)$, 
the convention adopted in the exponential race to follow; where the benchmarks receive latent variables
of the kind $E/w$.
The future population is exchangeable in the ball sense: its law is invariant under \(\mathfrak S_\infty\) acting on individuals, and after ranking, components appear in the size-biased order of nonhomogeneous order statistics. What is sometimes read in the statistical literature as nonexchangeability of such constructions is a disturbance of alphabet symmetry once tables are ranked by arrival, not a disturbance of ball exchangeability -- the ranking of tables should be updated each time a new representative, arriving at time \(n\), tests against the benchmarks differently from how the first \(n-1\) did. Alphabet symmetry is regained formally by a final shuffle of the tables under the box group.

\subsection{Ranking conventions}

Order in the distribution-free framework suggests peculiar views on familiar concepts, developed systematically in Section~\ref{sec:algebra}. Two remarks fix the analytic conventions used throughout.

Ranking occurs by comparing latent variables of the population units. For the sampled members these are i.i.d.; for the benchmarks, in general, not. The expected value of a latent variable acquires the meaning of a measure of relative strength -- the ability to rank lower or higher in comparisons. The direction of ranking is fixed for convenience only and carries no interpretation (though we speak of strength and winners). Recalling integration by parts, for an independent random variable \(Y\ge0\) and a uniform reference \(U\stackrel{\rm d}{=}\mathrm{U}[0,1]\),
\[
\EE Y = \PP[Y>U].
\]
Higher binomial moments are interpreted similarly. This is the fiducial device of Fisher and Hill \cite{Fisher,Hill} in its minimal form: \(U\) is a pivotal quantity, its law carrying no information about \(Y\)'s unknown distribution, and the inversion -- known pivot in, unknown-law feature out -- recovers \(\EE Y\) same way as a rank recovers Hill's \(A_n\) statistic distribution-free of the underlying law.

The PPP setting involves random variables supported on all of \(\RR_+\), unbounded, so the reference above must be adapted: no single bounded pivot can test \(X\) at every scale, and the fiducial comparison becomes a family of pivots indexed by rate rather than one fixed \(U\). In that case the probability of ranking higher than a ``uniform variable on \(\RR_+\)'' relates to the mean via regularisation,
\[
\EE X = \int_0^\infty \PP[X>x]\,{\rm d}x = \lim_{\delta\to0}\PP[X>E/\delta]/\delta,
\]
where \(E\) is a standard exponential random variable acting as a scaling benchmark: this is the mechanism by which the benchmarks of Section~\ref{sec:setup} extract a comparison scale from an otherwise translation-invariant strength axis. More generally, the regularised probability that \(X\) survives one independent ``uniform on \([0,\infty)\)'' of intensity \(w_1\) and fails against a second of intensity \(w_2\) is
\[
\lim_{\delta\to0}\PP\bigl[X>E_1/(w_1\delta),\ X<E_2/(w_2\delta)\bigr]/\delta
=w_1\,\EE X\,,
\]
from \(\PP[X>E_1/(w_1\delta),X<E_2/(w_2\delta)]=\EE\bigl[(1-e^{-w_1\delta X})e^{-w_2\delta X}\bigr]\sim w_1\delta\,\EE X\) as \(\delta\to0\), which recovers the elementary size-biased pick (survive one uniform) and the anti-size-biased deletion (fail against one uniform) that appear in the deletion kernels of \cite{GnedinHaulkPitman2010}.

\section{The geometric setup}
\label{sec:setup}
 
Let \(\Pi\) be a homogeneous planar Poisson point process of unit intensity on \(\RR\times\RR_+\). Each atom \((t,x)\) carries a birth time \(t\) and a latent strength \(x\). Split the process according to the sign of time:
\begin{align*}
\Pi_- &= \Pi\cap\bigl((-\infty,0)\times\RR_+\bigr)\qquad\text{(past population)},\\
\Pi_+ &= \Pi\cap\bigl((0,\infty)\times\RR_+\bigr)\qquad\text{(future population)}.
\end{align*}
The two populations are independent. The past generates benchmarks; the future is ranked relative to those benchmarks.
 
For an interval \(I\subset\RR\) write \(\Pi_I=\Pi\cap(I\times\RR_+)\). The minimal strength \(M(I)\) in \(\Pi_I\) is distributed as \(\Exp(|I|)\) and is independent across disjoint intervals.
 
Minima with larger weight tend to be ranked lower (closer to the origin).
 
\begin{definition}[Social choice operator]
\label{def:choice}
A social choice operator {\rm C}, or simply {\it choice} is a measurable map that sends a configuration \(A\subset\Pi_-\) to a (finite or countable) subset \({\rm C}(A)\subset A\). A global choice operator satisfies \({\rm C}(A)={\rm C}(\Pi_-)\cap A\) and automatically obeys path-independence.
\end{definition}
Per default in this paper the social choice operator is global.

\begin{example}
\begin{enumerate}[label=(\roman*)]
\item For a system of disjoint intervals \(I_i\) the set of minima \(M(I_i)\) consists of the exponential-race winners associated with those intervals.
\item A point \((t,x)\in\Pi_-\) is a south-east record if the rectangle \([t,0]\times[0,x)\) is empty. The projected heights of the south-east records form a scale-invariant Poisson process of intensity \(1/x\,{\rm d}x\). The set of such records is Pareto boundary.
\end{enumerate}
\end{example}
The sampling operator \({\rm S}(\theta)\) extracts, from each successive interval $[\theta(n-1),\theta n]$ of length \(\theta\) in the positive half-plane, the point of lowest strength,
that is ranked $1$ on the strength scale in the population above the time slot; this sample element is regarded as representative elected to represent the local population.
The resulting sample strengths are i.i.d.\ \(\Exp(\theta)\).
 
\section{Fixed points, dust, multinomial sampling}
\label{sec:dust}
 
The PPP is {\it marginally simple}, that is has zero probability to occupy a line, neither on the time scale nor on strength. 
Thus to create absolute standards above zero we need to use conditioning, which is not in the province of the choice operator $\rm C$. However, fixed points can be considered as degenerate dust.
 
Let \(J=[a,b]\subset\RR_+\) be an interval on the strength scale. Consider the trinomial choice function \({\rm C}(\Pi_-)=\Pi_-\cap(\RR_-\times J)\), interpreted as interval or fuzzy testing. This divides the future population into three strength categories: below \(J\), within \(J\), above \(J\). Since the strength values of \(J\)-benchmarks are dense in \(J\), ranking of the \(\Pi_+\) population within \(J\) produces dust of singletons.
 In Kingman's theory this diffuse component is commonly considered a special category, which in exchangeable coalescent processes appears as a source of new progenitors, and in fragmentation processes as a terminal state. From the viewpoint of ordered partitions (or unlabelled composition structures) this is a legitimate splitter.
 Testing against \(J\)-benchmarks generates a trinomial partition of \(\Pi_+\) with frequencies coming from the percentiles of the exponential distribution. Bernoulli splitting appears then as a limit as \(J\) degenerates to a single point \(x\); no conditioning on an atom of fixed strength is required.
 
Alternatively, we may fix a distinguished benchmark, namely the latest arrival in the past population with strength in \(J\). Letting \(|J|\) shrink to a point \(x\) produces a Bernoulli partition of \(\Pi_+\)
by  independent binary marking.

Choosing arbitrary domain in the past and projecting this restricted Poisson scatter in the future yields the  splitting by points of univariate Poisson process of arbitrary intensity, equivalently providing the most general structure of hazard rates.

This construction can be iterated in many ways. Ranking against nested interval hierarchies produces Cantor sets, characteristic of the Poisson--Kingman partitions \cite{Pitman2006} or regenerative composition structures
\cite{GnedinPitman2005}.
 
Furthermore, setting \({\rm C}(\Pi_-)=\Pi_-\cap(I\times J)\), with \(I\) an interval on the temporal scale, restricts the choice to a finite Poisson scatter of benchmarks in the strength band \(J\) arriving during \(I\), hence partitions the future population into a random number of categories -- a natural combinatorial mixing of the binary (interval) test above with the fuzzy splitter.
 

\section{Splitting constructions}
This section employs elementary examples to illustrate the simple transition from splitting $[0,1]$ by nonhomogeneous uniform order statistics (boxes) and dropping further $n$ uniform points (balls) to splitting
$[0,\infty)$ by $n$ nonhomogeneous exponential order statistics (benchmarks), dropping further exponentials (strengths) and counting combinatorial residuals at splits. 
This is most transparent when tilting occurs on the combinatorial level, with parameters derived from the initial configuration.
The term {\it composition} of integer $n$ means here an integer vector $(a_1,\ldots,a_k)$ whose total is $n$ and the parts may be $0$.

\subsection{Binary splitting}
\subsubsection{One benchmark and the two-colour Markov-P\'olya urn}

\label{sec:one-benchmark}
We start with the iconic example going back to Bayes and Laplace.
Partition the past population into successive unit-length time slots and let the choice operator retain only the minimum of a single fixed slot. Call this minimum \(B\stackrel{\rm d}{=}\Exp(1)\). A future sample of size \(n\) generated by \({\rm S}\) consists of \(n\) i.i.d.\ \(\Exp(1)\) strengths independent of \(B\).
 
The benchmark divides the sample into two types: ``below'' (strength \(<B\)) and ``above'' (strength \(>B\)). Let \(A_1\) (resp.\ \(A_2\)) be the corresponding counts, so \(A_1+A_2=n\). Because the \(n+1\) variables are i.i.d.\ continuous, the rank of \(B\) among them is uniform on \(\{1,\dots,n+1\}\). Consequently every composition is equally likely:
\begin{equation}
\label{eq:uniform}
\PP[A_1=a_1,A_2=a_2]=\frac1{n+1},\qquad a_1+a_2=n,
\end{equation}
which is 
the step $n$ occupation law of a two-colour Markov--P\'olya urn that starts with one ball of each colour.
 
\subsection{Two past slots, merged benchmark}
\label{sec:two-slots}
 
Retain the minima \(B_1,B_2\) of two distinct unit slots and merge them into the single benchmark \(B=\min(B_1,B_2)\). Then \(B\stackrel{\rm d}{=}\Exp(2)\). Averaging the binomial likelihood against the \(\Exp(2)\) density yields
for $n=a_1, a_2$

\begin{equation}
\label{eq:merged-unit}
\PP[A_1=a_1,A_2=a_2]=\frac{2(a_2+1)}{(a_1+a_2+1)(a_1+a_2+2)}.
\end{equation}
This is the step $n$ law of a P\'olya urn started with one ball of colour ``below'' and two balls of colour ``above''.
\subsection{Two distinct weights, merged benchmark}
The next formula  is again standard
Let \(B=\min(B_1,B_2)\) with \(B_i\stackrel{\rm d}{=}\Exp(w_i)\), \(w_1,w_2>0\). Then \(B\stackrel{\rm d}{=}\Exp(s)\), \(s=w_1+w_2\). For a future sample of size \(n\) generated by \({\rm S}\),
\begin{equation}
\label{eq:merged-weighted}
\PP[A_1=a_1]=\binom{n}{a_1}\frac{{\rm B}(1+a_1,s+n-a_1)}{{\rm B}(1,s)},\qquad a_1=0,\dots,n.
\end{equation}

 
Only the total weight \(s\) appears; the split between the two slots is invisible after merging.
 

\subsection{Non-binary splitting}
\subsubsection{Two separate benchmarks: joint composition}
\label{sec:two-benchmarks}
 
Keep the two weighted benchmarks \(B_1\stackrel{\rm d}{=}\Exp(w_1)\) and \(B_2\stackrel{\rm d}{=}\Exp(w_2)\) unmerged. They divide \((0,\infty)\) into three cells. Write \(A_0,A_1,A_2\) for the occupation numbers of a future sample of size \(n\), and set \(s=w_1+w_2\).
 
\begin{proposition}[Joint composition law]
\label{thm:joint-two}
\begin{eqnarray*}
\PP[A_0=a_0,A_1=a_1,A_2=a_2]
=\\
\binom{n}{a_0,a_1,a_2}w_1 w_2\,B(a_0+1,s+n-a_0)
\bigl[B(a_1+1,a_2+w_1)+B(a_1+1,a_2+w_2)\bigr].
\end{eqnarray*}
\end{proposition}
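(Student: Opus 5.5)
The plan is to condition on the benchmarks and integrate, splitting according to which benchmark is smaller. Take the future sample $X_1,\dots,X_n$ i.i.d.\ $\Exp(1)$, produced by ${\rm S}$ with unit slots as in Section~\ref{sec:one-benchmark}. These are independent of $B_1\sim\Exp(w_1)$ and $B_2\sim\Exp(w_2)$. The cells are $(0,\min)$, $(\min,\max)$ and $(\max,\infty)$. Since $\PP[B_1=B_2]=0$, the event splits into the two disjoint cases $\{B_1<B_2\}$ and $\{B_2<B_1\}$. The two summands in the bracket should correspond to these two cases.

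On $\{B_1<B_2\}$, write $B_1=m$ and $B_2=m+d$. The joint density factorises as
\[
w_1w_2\,e^{-sm}\,e^{-w_2 d},\qquad m,d>0 .
\]
This is the memoryless property: given the minimum, the overshoot of $B_2$ is $\Exp(w_2)$, independent of $m$. Given $(m,d)$, the cell counts are multinomial with cell probabilities $1-e^{-m}$, $e^{-m}(1-e^{-d})$ and $e^{-m}e^{-d}$. The conditional probability of $(a_0,a_1,a_2)$ is therefore
\[
\binom{n}{a_0,a_1,a_2}(1-e^{-m})^{a_0}e^{-m(a_1+a_2)}(1-e^{-d})^{a_1}e^{-da_2}.
\]
This expression factorises into an $m$-part and a $d$-part, so the double integral is a product of two single integrals.

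Both integrals reduce to Beta integrals under the substitution $u=e^{-m}$ (resp.\ $u=e^{-d}$). For the first,
\[
\int_0^\infty (1-e^{-m})^{a_0}e^{-m(s+n-a_0)}\,{\rm d}m=\int_0^1(1-u)^{a_0}u^{s+n-a_0-1}\,{\rm d}u=B(a_0+1,s+n-a_0).
\]
This step uses $a_1+a_2=n-a_0$. In the same way, $\int_0^\infty(1-e^{-d})^{a_1}e^{-d(a_2+w_2)}\,{\rm d}d=B(a_1+1,a_2+w_2)$. The case $\{B_1<B_2\}$ thus contributes
\[
\binom{n}{a_0,a_1,a_2}w_1w_2\,B(a_0+1,s+n-a_0)\,B(a_1+1,a_2+w_2).
\]
Swapping the roles of the indices, the case $\{B_2<B_1\}$ gives the same expression with $w_2$ replaced by $w_1$ in the last Beta factor. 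Adding the two contributions gives the stated formula.

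There is no real obstacle here; the only points needing care are bookkeeping ones. First, the surplus $a_2+w_j$ must carry the weight of the \emph{larger} benchmark, because that is the rate of the overshoot. Second, the first Beta factor involves the total rate $s$, consistent with the merged formula \eqref{eq:merged-weighted}. As a sanity check, summing over $a_1+a_2=n-a_0$ should recover \eqref{eq:merged-weighted}.
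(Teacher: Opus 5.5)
Your proof is correct. The joint density $w_1w_2e^{-sm}e^{-w_2d}$ of (minimum, overshoot) is right, as are the three cell probabilities and the factorisation of the multinomial weight. The two Beta integrals after $u=e^{-m}$, $u=e^{-d}$ (using $a_1+a_2=n-a_0$) then reproduce exactly the paper's intermediate term on $\{B_1<B_2\}$.

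Your route differs from the paper's, however. The paper never integrates over benchmark values. It runs the discrete exponential race on the $n+2$ items $B_1,B_2,X_1,\dots,X_n$: each next arrival is chosen with probability proportional to its rate among the items not yet placed. It multiplies these discovery probabilities along the pattern ``$a_0$ future points, then $B_1$, then $a_1$ future points, then $B_2$, then the rest''. The resulting rate products $\prod_{j=0}^{a_0}(n+s-j)^{-1}$ and $\prod_{j=0}^{a_1}(a_1+a_2+w_2-j)^{-1}$, together with the factorials from orderings within cells, collapse into the two Beta functions. That argument is purely combinatorial and gives the law of the full joint ranking of benchmarks and sample. It also makes the three-colour urn reading stated after the proposition immediate: weight $s$ until $B_{(1)}$ is revealed, then $w_1$ or $w_2$.

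Your argument instead exposes the de Finetti mixing structure. Conditionally on the benchmarks, the counts are multinomial with stick-breaking cell probabilities. Your minimum/overshoot decomposition is the $k=2$ case of Theorem~\ref{thm:spacings}, with $D_1\sim\Exp(s)$, $D_2\sim\Exp(w_{i_2})$ and $\PP[\sigma]=w_{i_1}/s$. In effect you have written out the paper's own proof of Theorem~\ref{thm:general} for two benchmarks. Your substitutions $u=e^{-m}$, $u=e^{-d}$ play the role of $V_r=e^{-D_r}$ together with the moment identity $\EE[(1-V)^pV^q]=W\,{\rm B}(q+W,p+1)$.
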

 
\begin{proof}
The \(n+2\) variables \(B_1,B_2,X_1,\dots,X_n\) are independent exponentials with rates 
\(w_1,w_2,1,\dots,1\). By memorylessness their arrival order can be generated sequentially: at each step, among the items not yet placed, the next to arrive (on the strength scale) is chosen with probability proportional to its rate, and the survivors remain independent exponentials with their original rates.
 
Condition on the event that \(B_1\) arrives before \(B_2\) (the complementary event is symmetric under \(w_1\leftrightarrow w_2\)). On this event the arrival pattern is: \(a_0\) future points, then \(B_1\), then \(a_1\) future points, then \(B_2\), then the remaining \(a_2\) future points. Multiplying the successive discovery probabilities and simplifying the resulting product of Gamma functions gives
\begin{eqnarray*}
\PP[B_1\text{ first},A_0=a_0,A_1=a_1,A_2=a_2]
=\\
\binom{n}{a_0,a_1,a_2}w_1 w_2\,B(a_0+1,s+n-a_0)B(a_1+1,a_2+w_2).
\end{eqnarray*}
 
The factor \(B(a_0+1,s+n-a_0)\) depends on \((w_1,w_2)\) only through \(s=w_1+w_2\), since it governs the placement of the \(a_0\) points before whichever of \(B_1,B_2\) arrives first, and \(B_{(1)}\stackrel{\rm d}{=}\Exp(s)\) regardless of which benchmark this turns out to be. Adding the symmetric ``\(B_2\) first'' term yields the claimed formula.
\end{proof}
 
Summing over \(a_1+a_2=n-a_0\) recovers the merged law \eqref{eq:merged-weighted}.

The joint law is the marginal law of a three-colour urn scheme: one ball for ``below \(B_{(1)}\)'' of weight \(s=w_1+w_2\)---bundling the two benchmarks together, since neither the future sample nor the other benchmark can distinguish \(B_1\) from \(B_2\) until one of them has actually been revealed---followed, once \(B_{(1)}\) is revealed, by a split of the remaining weight into \(w_1\) or \(w_2\) with probabilities \(w_1/s\) and \(w_2/s\) according to which benchmark arrived first.

\subsubsection{Matched rates and the Dirichlet--multinomial}
\label{sec:matched}
 This is  the example of $k$-colour  P{\'o}lya urn with starting composition $(1,\ldots,1)$, corresponding
to $k$ past benchmarks exchangeable with the  future sample 
\begin{proposition}[Matched-rate embedding]
\label{prop:matched}
Let \(B_1,\dots,B_k\) be i.i.d.\ \(\Exp(1)\) benchmarks (unmerged) and let the future sample be generated by \({\rm S}\). The resulting composition \((A_0,\dots,A_k)\) is uniformly distributed over the \(\binom{n+k}{k}\) compositions of \(n\) into \(k+1\) parts. Equivalently, it is $n$-state of a \((k+1)\)-colour P\'olya urn started with one ball of each colour (Dirichlet-multinomial\((1,\dots,1)\)).
\end{proposition}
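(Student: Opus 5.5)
The plan is to reduce everything to the exchangeability of \(n+k\) i.i.d.\ continuous variables, exactly as in the one-benchmark case leading to \eqref{eq:uniform}. The benchmarks \(B_1,\dots,B_k\) and the sample strengths \(X_1,\dots,X_n\) produced by \({\rm S}\) form \(n+k\) i.i.d.\ \(\Exp(1)\) variables. Since they are continuous, there are almost surely no ties, and the induced total order is a uniform random permutation of the \(n+k\) labels. The composition \((A_0,\dots,A_k)\) depends only on the relative ranks. \(A_0\) counts the \(X\)'s below \(B_{(1)}\), \(A_j\) counts those between \(B_{(j)}\) and \(B_{(j+1)}\), and \(A_k\) counts those above \(B_{(k)}\). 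This matches the convention of Proposition~\ref{thm:joint-two}, where the cells are bounded by the order statistics of the benchmarks rather than by the labelled \(B_i\).

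Next I will forget the labels. Under the uniform permutation, the word in the two-letter alphabet \(\{b,x\}\), recording whether the \(i\)th smallest item is a benchmark or a sample point, is uniform over the \(\binom{n+k}{k}\) words containing exactly \(k\) letters \(b\). This holds because each such word is realised by exactly \(k!\,n!\) permutations. The map sending a word to the sequence of run lengths of \(x\)'s between consecutive \(b\)'s is the stars-and-bars bijection onto the set of compositions of \(n\) into \(k+1\) parts, zero parts allowed. Hence each composition has probability \(\binom{n+k}{k}^{-1}\), which proves the first claim.

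For the urn equivalence I will recall that the \((k+1)\)-colour P\'olya urn started from \((1,\dots,1)\) has, after \(n\) draws, the Dirichlet--multinomial law. Any fixed sequence of colours with counts \((a_0,\dots,a_k)\) occurs with probability \(\prod_j a_j!\big/\bigl(k+1\bigr)^{(n)}\), where \((k+1)^{(n)}=(k+1)(k+2)\cdots(k+n)\) is the rising factorial. Multiplying by the multinomial coefficient gives \(n!\,k!/(n+k)!=\binom{n+k}{k}^{-1}\), a constant. As a cross-check, the case \(k=2\), \(w_1=w_2=1\) of Proposition~\ref{thm:joint-two} should reduce to \(2/((n+1)(n+2))\), which I would verify using Beta-function identities.

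The only point needing care is the labelling convention: the cells must be defined by the order statistics \(B_{(j)}\), not by the indices of the \(B_i\). With the labelled convention the composition would carry an extra random permutation and would not be uniform. Everything else is routine counting.
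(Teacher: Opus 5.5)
Your proposal is correct and follows the paper's argument: exchangeability of the \(n+k\) i.i.d.\ continuous variables makes the set of ranks occupied by the benchmarks uniform over the \(\binom{n+k}{k}\) possible subsets, and the composition is a deterministic (in fact stars-and-bars bijective) function of that set. Your explicit Dirichlet--multinomial computation for the urn equivalence, and the \(k=2\) check against Proposition~\ref{thm:joint-two} (which does reduce to \(2/((n+1)(n+2))\)), fill in details the paper leaves implicit.
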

 
\begin{proof}
All \(n+k\) variables are i.i.d.\ continuous, so every subset of size \(k\) is equally likely to be the set of ranks occupied by the benchmarks. The composition is a deterministic function of that subset and is therefore uniform.
\end{proof}

\begin{remark}
A combinatorial derivation of the joint distribution used only exchangeability: the original elements are exchangeable  with the sample, therefore ranking by strength yields a uniform permutation
of $k$ indistinguishable benchmarks and $n$ data points, which immediately yields the distribution of the composition. In the P{\'o}lya urn the exchangeability acts on the temporal scale and is much less obvious due to the sequential nature of the Markov chain. 
The matching of rates is essential: if the common benchmark weight differs from the future sampling rate the composition ceases to be uniform.
\end{remark}

\subsection{The exponential race with finite set of benchmarks}
\label{sec:race}

Let \(B_1,\dots,B_k\) have arbitrary positive weights \(w_1,\dots,w_k\),  and let the future sample \(X_1,\dots,X_n\) be i.i.d.\ \(\Exp(1)\), independent of the benchmarks. Write \(\sigma=(i_1,\dots,i_k)\) for the discovery order of the benchmarks and \(W_r(\sigma)=\sum_{s=r}^k w_{i_s}\) for the residual weight at stage \(r\).
The following observation builds on  ranking properties of the nonhomogeneous exponential order statistics.
\begin{proposition}[Size-biased order]
\label{prop:size-biased}
\[
\PP[\sigma=(i_1,\dots,i_k]]=\prod_{r=1}^k\frac{w_{i_r}}{W_r(\sigma)}.
\]
\end{proposition}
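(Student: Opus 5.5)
The plan is to prove the formula by the sequential memoryless construction already used for Proposition~\ref{thm:joint-two}, this time applied to the benchmarks alone. The future sample does not enter at all: \(\sigma\) is the ranking of the independent variables \(B_1,\dots,B_k\) with \(B_i\stackrel{\rm d}{=}\Exp(w_i)\). We proceed by induction on the stage \(r\), revealing the benchmarks one at a time in increasing order of strength.

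\emph{Step 1 (first arrival).} Among independent exponentials with rates \(w_{i_1},\dots,w_{i_k}\), the minimum has law \(\Exp(W_1(\sigma))\) with \(W_1(\sigma)=\sum_j w_j\). The probability that it is attained by \(B_{i_1}\) is
\[
\PP[B_{i_1}<\min_{j\ne i_1}B_j]=\int_0^\infty w_{i_1}e^{-w_{i_1}x}e^{-(W_1-w_{i_1})x}\,{\rm d}x=\frac{w_{i_1}}{W_1(\sigma)}.
\]
We also record the standard fact that the identity of the argmin is independent of the value of the minimum.

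\emph{Step 2 (regeneration).} Condition on the event that \(B_{i_1}\) is the minimum and on its value \(m\). By the memoryless property, the overshoots \(B_j-m\), \(j\ne i_1\), are again independent \(\Exp(w_j)\) variables, and they are independent of \(m\) and of the event. This can be checked directly by factorising the joint density on \(\{B_{i_1}=m<B_j\ \forall j\}\) as \(w_{i_1}e^{-W_1 m}\prod_{j\ne i_1}w_je^{-w_j(b_j-m)}\). The relative order of the remaining benchmarks is unaffected by subtracting \(m\). So the remaining problem is the same race on the index set \(\{i_2,\dots,i_k\}\) with total rate \(W_2(\sigma)\).

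\emph{Step 3 (induction).} Applying Step 1 to the reduced race gives the factor \(w_{i_2}/W_2(\sigma)\), and iterating yields the product \(\prod_{r=1}^k w_{i_r}/W_r(\sigma)\). The last factor is \(w_{i_k}/w_{i_k}=1\). Formally, we induct on \(k\): the base case \(k=1\) is trivial, and the step combines Step 1 with the conditional independence from Step 2.

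The only delicate point is Step 2: we must justify that conditioning on the identity and value of the first arrival leaves the residual family i.i.d.-exponential with the original rates, and in particular that the argmin is independent of the minimum value. The explicit density factorisation above settles this in one line. As an optional cross-check, the product formula can be seen to sum to one over all \(\sigma\) by summing first over \(i_k\), then \(i_{k-1}\), and so on. This also identifies \(\sigma\) as the size-biased permutation of the weights, which is the interpretation the statement's name refers to.
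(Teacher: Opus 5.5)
Your proposal is correct and takes the same route as the paper. The paper states this proposition without a separate proof, but the proof of Proposition~\ref{thm:joint-two} relies on the same memoryless sequential discovery: the next benchmark is picked with probability proportional to its rate, and the survivors remain independent exponentials with their original rates. Your density factorisation on \(\{B_{i_1}=m<B_j\ \forall j\ne i_1\}\) makes that regeneration step rigorous.
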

 
\begin{theorem}[Independent residual spacings]
\label{thm:spacings}
Conditional on \(\sigma\), the spacings \(D_r=B_{(r)}-B_{(r-1)}\) (with \(B_{(0)}=0\)) are independent and \(D_r\mid\sigma\stackrel{\rm d}{=}\Exp(W_r(\sigma))\).
\end{theorem}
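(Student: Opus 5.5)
The plan is to compute the joint density of the order statistics together with the discovery order directly, and then read off both Proposition~\ref{prop:size-biased} and the conditional independence of the spacings from its factorisation. The \(B_i\) are independent with densities \(w_i e^{-w_i x}\). Fix a permutation \(\sigma=(i_1,\dots,i_k)\). On the event \(\{\sigma\}\) we have \(B_{(r)}=B_{i_r}\), so the joint density of \((B_{(1)},\dots,B_{(k)})\) restricted to \(\{\sigma\}\) at \(0<b_1<\dots<b_k\) is
\[
\prod_{r=1}^k w_{i_r}e^{-w_{i_r}b_r}.
\]

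Next I would change variables to the spacings \(d_r=b_r-b_{r-1}\), with \(b_0=0\). The Jacobian is \(1\), and the ordered region maps onto \((0,\infty)^k\). The key algebraic identity is the summation by parts
\[
\sum_{r=1}^k w_{i_r}b_r=\sum_{r=1}^k w_{i_r}\sum_{j\le r}d_j=\sum_{j=1}^k d_j\sum_{r\ge j}w_{i_r}=\sum_{j=1}^k W_j(\sigma)\,d_j .
\]
Hence the density of \((D_1,\dots,D_k)\) on \(\{\sigma\}\) is
\[
\prod_{r=1}^k w_{i_r}\,e^{-W_r(\sigma)d_r}
=\Bigl(\prod_{r=1}^k\frac{w_{i_r}}{W_r(\sigma)}\Bigr)\prod_{r=1}^k W_r(\sigma)e^{-W_r(\sigma)d_r}.
\]

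Integrating over \(d\in(0,\infty)^k\) gives \(\PP[\sigma]=\prod_r w_{i_r}/W_r(\sigma)\), which is Proposition~\ref{prop:size-biased} proved along the way. Dividing by it, the conditional density is the product of \(\Exp(W_r(\sigma))\) densities. This gives both the independence and the stated marginals.

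As a cross-check I would also give the memoryless argument already used in the proof of Proposition~\ref{thm:joint-two}. The first arrival is \(\min_i B_i\sim\Exp(\sum w_i)\), and it is independent of the identity of the argmin, which equals \(i\) with probability \(w_i/\sum w\). Given these, the remaining overshoots are independent \(\Exp(w_j)\), and one inducts on \(k\). The only delicate point in that route is the independence of the minimum value and the argmin together with the fresh-start property. The density computation above settles this point rigorously, so I expect no real obstacle beyond the bookkeeping of the summation-by-parts identity. The sample \(X_1,\dots,X_n\) plays no role, being independent of the benchmarks.
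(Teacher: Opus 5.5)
Your proof is correct. The paper gives no written proof of this theorem; it presents it as an observation about nonhomogeneous exponential order statistics. The only argument nearby is the sequential memoryless description in the proof of Proposition~\ref{thm:joint-two}: at each step the next arrival is chosen with probability proportional to its rate, and the survivors restart as fresh exponentials with their original rates. That is essentially your cross-check route, which gives Proposition~\ref{prop:size-biased} and Theorem~\ref{thm:spacings} by induction on $k$.

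Your main route is genuinely different. You write the joint density of the order statistics on the event $\{\sigma\}$ and pass to spacings with unit Jacobian. The summation by parts $\sum_r w_{i_r}b_r=\sum_j W_j(\sigma)d_j$ then factors the density as $\prod_r \bigl(w_{i_r}/W_r(\sigma)\bigr)\cdot\prod_r W_r(\sigma)e^{-W_r(\sigma)d_r}$. The law of $\sigma$ and the conditional product-exponential law of the spacings follow in one line.

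What this buys is self-containedness. It proves Proposition~\ref{prop:size-biased} at the same time. It also establishes directly the independence of the minimum from the argmin and the fresh-start property, which the memoryless argument takes for granted.

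What the sequential route buys is the dynamic reading the paper exploits afterwards. There $W_r(\sigma)$ is the total rate still in the race after $r-1$ discoveries, which is the basis of the urn interpretation following Proposition~\ref{thm:joint-two}.
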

 
Set \(V_r=e^{-D_r}\). Then \(V_r\mid\sigma\stackrel{\rm d}{=}{\rm B}(W_r(\sigma),1)\) are independent. The stick-breaking coordinates
\begin{align*}
P_1&=1-V_1, &
P_r&=(1-V_r)\prod_{j<r}V_j, &
P_{k+1}&=\prod_{j=1}^k V_j
\end{align*}
form a random partition of unity, and \(P_r\) is the conditional probability that a future point falls into the \(r\)-th cell.
\begin{theorem}[General composition law]
\label{thm:general}
For \(a_0+\cdots+a_k=n\),
\begin{eqnarray}\nonumber
\label{eq:general}
\PP[A_0=a_0,\dots,A_k=a_k]
=\\
\binom{n}{a_0,\dots,a_k}\Bigl(\prod_{i=1}^k w_i\Bigr)
\sum_{\sigma\in\mathfrak{S}_k}
\prod_{r=1}^k {\rm B}\bigl(A_r(\sigma)+a_r+\cdots+a_k,\,a_{r-1}+1\bigr).
\end{eqnarray}
\end{theorem}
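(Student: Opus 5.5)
The plan is to condition on the discovery order \(\sigma\) and the benchmark spacings, which makes the future sample multinomial. I will then integrate out the spacings using Theorem~\ref{thm:spacings} and average over \(\sigma\) using Proposition~\ref{prop:size-biased}.

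I read the symbol \(A_r(\sigma)\) in the statement as the residual weight \(W_r(\sigma)\). This is a notational point: \(A_r(\sigma)\) is not defined in the paper, and it must be \(W_r(\sigma)\) for the formula to reduce, at \(k=1\), to the merged law \eqref{eq:merged-weighted}. Indeed, for \(k=1\) the statement gives \(\binom{n}{a_0,a_1}w\,{\rm B}(w+a_1,a_0+1)\), while \eqref{eq:merged-weighted} with \(s=w\) gives \(\binom{n}{a_0}{\rm B}(1+a_0,w+a_1)/{\rm B}(1,w)=\binom{n}{a_0}\,w\,{\rm B}(a_0+1,w+a_1)\). These agree.

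\textbf{Conditional multinomial law.} Fix \(\sigma\) and the ordered benchmarks \(B_{(1)}<\dots<B_{(k)}\). The \(X_j\) are i.i.d.\ \(\Exp(1)\), so each falls independently into cell \(r\in\{0,\dots,k\}\), the interval \((B_{(r)},B_{(r+1)})\) with \(B_{(0)}=0\), \(B_{(k+1)}=\infty\). The probability of cell \(r\) is \(e^{-B_{(r)}}-e^{-B_{(r+1)}}\). With \(V_r=e^{-D_r}\), the cell probabilities are:
\begin{itemize}
\item cell \(0\): \(1-V_1\);
\item cell \(r\), for \(1\le r\le k-1\): \((1-V_{r+1})\prod_{j\le r}V_j\);
\item cell \(k\): \(\prod_{j\le k}V_j\).
\end{itemize}
This is the stick-breaking vector \((P_1,\dots,P_{k+1})\), shifted by one index. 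Hence the conditional probability of \((a_0,\dots,a_k)\) is \(\binom{n}{a_0,\dots,a_k}\) times the product of these cell probabilities raised to the counts. Collecting the powers of each \(V_r\), this product equals
\[
\prod_{r=1}^k (1-V_r)^{a_{r-1}}\,V_r^{\,a_r+\cdots+a_k}.
\]

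\textbf{Integrating out the spacings.} By Theorem~\ref{thm:spacings}, given \(\sigma\) the \(V_r\) are independent with density \(W_r v^{W_r-1}\) on \([0,1]\), writing \(W_r=W_r(\sigma)\). The expectation therefore factorises into one Beta integral per stage:
\[
\EE\Bigl[\prod_{r=1}^k (1-V_r)^{a_{r-1}}V_r^{a_r+\cdots+a_k}\,\Big|\,\sigma\Bigr]=\prod_{r=1}^k W_r(\sigma)\,{\rm B}\bigl(W_r(\sigma)+a_r+\cdots+a_k,\;a_{r-1}+1\bigr).
\]

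\textbf{Averaging over the discovery order.} Proposition~\ref{prop:size-biased} gives \(\PP[\sigma]=\prod_r w_{i_r}/W_r(\sigma)\). The factors \(W_r(\sigma)\) cancel against those produced in the previous step. Since \(\sigma\) is a permutation, \(\prod_r w_{i_r}=\prod_i w_i\) regardless of \(\sigma\), so this factor comes out of the sum. Summing over \(\sigma\in\mathfrak S_k\) yields \eqref{eq:general}.

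\textbf{Where the difficulty lies.} The only substantive input is Theorem~\ref{thm:spacings}, which I take as given. If it needs justification, it follows from memorylessness exactly as in the proof of Proposition~\ref{thm:joint-two}: after the \((r-1)\)-th benchmark is revealed, the remaining benchmarks are independent exponentials with total rate \(W_r(\sigma)\). The rest is index bookkeeping, namely matching cell \(r-1\) with the factor \(1-V_r\). As a consistency check, the computation is the \(k\)-benchmark analogue of Proposition~\ref{thm:joint-two}, whose formula it reproduces at \(k=2\).
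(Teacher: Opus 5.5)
Your proposal is correct and follows the paper's proof essentially step for step. You condition on \(\sigma\) to get a multinomial law with stick-breaking cell probabilities, factorise the expectation into one Beta integral per stage using Theorem~\ref{thm:spacings}, and cancel the \(W_r(\sigma)\) factors against the size-biased order probabilities before summing over \(\sigma\). Your reading of \(A_r(\sigma)\) as \(W_r(\sigma)\) is exactly the quantity the paper's proof uses.
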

 
\begin{proof}
Given \(\sigma\) the vector \(A\) is multinomial with parameters \(P_r\). Expanding the powers of the stick-breaking coordinates,
\[
\prod_{r=1}^{k+1}P_r^{a_{r-1}}=\prod_{j=1}^k(1-V_j)^{a_{j-1}}V_j^{a_j+\cdots+a_k}.
\]
By independence of the \(V_j\) and \(\EE[(1-V)^p V^q]=W\,{\rm B}(q+W,p+1)\) for \(V\stackrel{\rm d}{=}{\rm B}(W,1)\),
\[
\PP[A=a\mid\sigma]=\binom{n}{a_0,\dots,a_k}\prod_{j=1}^kW_j(\sigma)\,{\rm B}\bigl(W_j(\sigma)+a_j+\cdots+a_k,\,a_{j-1}+1\bigr).
\]
Multiplying by \(\PP[\sigma]=\prod_j w_{i_j}/W_j(\sigma)\) cancels the \(W_j(\sigma)\) factors and leaves \(\prod_i w_i\), independent of \(\sigma\). Summing over \(\sigma\in\mathfrak{S}_k\) produces \eqref{eq:general}.
\end{proof}

 

\section{The Dirichlet integral and quasisymmetric functions }
\label{sec:algebra}
In this section we write the Dirichlet integral in the survival variables \cite{PaisleyBleiJordan2012},
which on the combinatorial level amounts to manipulation with stick breaking factors.
In the limiting process  the ordered Kingman paintbox appears  in survival coordinates. 
The algebra of symmetric functions of preferential attachment is replaced by 
quasisymmetric.

\subsection{The Dirichlet integral in survival coordinates}

\label{sec:DI}
Consider the \(k\)-simplex 
$$\Delta_k=\{\boldsymbol s=(s_1,\ldots,s_k):1=s_0>s_1>\cdots>s_k>0\},$$ 
and the combinatorial analogue of a decreasing vector, a strictly decreasing integer sequence \(n=0>A_1>\cdots>A_k>0\), with \(s_{k+1}:=0\), \(A_{k+1}:=0\).

\begin{lemma}[Dirichlet integral in homogeneous survival coordinates with surface differential]
\label{lem:DI}
\begin{equation}
\label{eq:DI}
\Bigl(\prod_{i=1}^k\binom{A_{i-1}}{A_i}\Bigr)\int_{\Delta_k}\prod_{i=1}^{k+1}(s_{i-1}-s_i)^{A_{i-1}-A_i-1}\,{\rm d}\boldsymbol s
=\frac{A_0}{\prod_{i=1}^{k+1}(A_{i-1}-A_i)}.
\end{equation}
\end{lemma}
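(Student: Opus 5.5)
The plan is to pass from the survival coordinates \(\boldsymbol s\) to their spacings, which turns the integral into the classical Dirichlet integral. Then the binomial prefactor telescopes into a multinomial coefficient, and the two combine into the right-hand side.

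\emph{Step 1 (spacings).} Set \(u_i=s_{i-1}-s_i\) for \(i=1,\dots,k+1\), with \(s_0=1\) and \(s_{k+1}=0\). Then \(u_i>0\) and \(u_1+\cdots+u_{k+1}=1\). The map \((s_1,\dots,s_k)\mapsto(u_1,\dots,u_k)\) is linear and triangular with unit diagonal up to sign, so \({\rm d}\boldsymbol s={\rm d}u_1\cdots{\rm d}u_k\). It maps \(\Delta_k\) bijectively onto the standard simplex \(\{u_i>0,\ \sum_{i\le k}u_i<1\}\), with \(u_{k+1}=1-\sum_{i\le k}u_i\).

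Write \(m_i=A_{i-1}-A_i\). Strict decrease of the integer sequence together with \(A_{k+1}=0\) gives \(m_i\ge1\), so all exponents \(m_i-1\) are nonnegative. The exponents also telescope: \(\sum_{i=1}^{k+1}m_i=A_0=n\).

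\emph{Step 2 (Dirichlet integral).} By the classical Dirichlet formula,
\[
\int_{\Delta_k}\prod_{i=1}^{k+1}(s_{i-1}-s_i)^{m_i-1}\,{\rm d}\boldsymbol s=\frac{\prod_{i=1}^{k+1}\Gamma(m_i)}{\Gamma(n)}=\frac{\prod_{i=1}^{k+1}(m_i-1)!}{(n-1)!}.
\]
If one prefers to avoid citing this formula, it follows by iterating the one-dimensional Beta integral \(\int_0^{c}x^{p-1}(c-x)^{q-1}{\rm d}x=c^{p+q-1}{\rm B}(p,q)\), integrating out \(s_k,s_{k-1},\dots\) in turn. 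This is exactly the stick-breaking manipulation used in the proof of Theorem~\ref{thm:general}.

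\emph{Step 3 (prefactor and combination).} The product of binomials telescopes:
\[
\prod_{i=1}^k\binom{A_{i-1}}{A_i}=\frac{A_0!}{m_1!\cdots m_k!\,A_k!}=\frac{n!}{\prod_{i=1}^{k+1}m_i!},
\]
where the last step uses \(A_k=m_{k+1}\). Multiplying by the result of Step 2 gives
\[
\frac{n!}{(n-1)!}\prod_{i=1}^{k+1}\frac{(m_i-1)!}{m_i!}=\frac{A_0}{\prod_{i=1}^{k+1}(A_{i-1}-A_i)},
\]
which is \eqref{eq:DI}.

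The only delicate point is bookkeeping. One must check that the boundary conventions \(s_0=1\), \(s_{k+1}=0\), \(A_{k+1}=0\) make the last spacing \(u_{k+1}=s_k\) carry exponent \(A_k-1\). One must also read the garbled condition ``\(n=0>A_1\)'' as \(A_0=n>A_1>\cdots>A_k>0\). With these in place the computation is routine.
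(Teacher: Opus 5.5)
Your proposal is correct and takes essentially the same route as the paper: identify the integral as the classical Dirichlet integral $\prod_{i}\Gamma(m_i)/\Gamma(n)$, telescope the binomial prefactor to $n!/\prod_{i=1}^{k+1}m_i!$ using $A_k=m_{k+1}$, and multiply. Your explicit change of variables to spacings (with unit Jacobian) and your reading of the index convention as $A_0=n>A_1>\cdots>A_k>0$ just spell out steps the paper's proof takes for granted.
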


\begin{proof}
Write \(a_i=A_{i-1}-A_i>0\) for \(i=1,\ldots,k+1\), so \(\sum_i a_i=A_0=n\). The integral on the left is the standard Dirichlet integral over the ordered simplex \(\Delta_k\), equal to \(\prod_i\Gamma(a_i)/\Gamma(A_0)=\prod_i(a_i-1)!/(A_0-1)!\). Using \(\binom{A_{i-1}}{A_i}=A_{i-1}!/(A_i!\,\mu_i!)\), the product \(\prod_{i=1}^k\binom{A_{i-1}}{A_i}\) telescopes to \(A_0!/(A_k!\prod_{i=1}^k a_i!)\). Multiplying the two displays,
\[
\frac{A_0!}{A_k!\prod_{i=1}^k a_i!}\cdot\frac{\prod_{i=1}^k(a_i-1)!\,(A_k-1)!}{(A_0-1)!}
=A_0\cdot\frac1{A_k}\cdot\prod_{i=1}^k\frac1{a_i}
=\frac{ A_0} {\prod_{i=1}^{k+1}a_i},
\]
using \(a_{k+1}=A_k-A_{k+1}=A_k\); this is the claimed right-hand side.
\end{proof}

The domain \(\Delta_k\) is a single chamber of the full weight simplex \(\{s_i\ge0,\sum s_i\le1\}\), singled out by the genuine order \(s_1>\cdots>s_k\) of the benchmark hierarchy. Summing the integrand of \eqref{eq:DI} over all \(k!\) chambers (equivalently, over all orderings of the parts \(\mu_i\)) recovers the fully symmetric classical Dirichlet integral. Restricting to the one chamber forced by the hierarchy's actual order is exactly what breaks full symmetry down to \emph{quasi}-symmetry: the integrand, and hence \eqref{eq:DI}, is invariant only under reindexings that preserve the order of the \(a_i\) -- the defining symmetry of quasisymmetric functions, taken up below.

\begin{remark}[Connection to the general composition law]
\label{rem:DI-general}
Lemma~\ref{lem:DI} is the survival-coordinate form of the computation underlying Theorem~\ref{thm:general}: the identity 
$$\EE[(1-V)^pV^q]=w {\rm B}(q+w,p+1)$$ 
for \(V\stackrel{\rm d}{=}{\rm B}(w,1)\),
applied along one fixed discovery order \(\sigma\), is an
 instance of \eqref{eq:DI} with the \(W_i\) identified with the residual comparison counts \(W_i(\sigma)+a_i+\cdots+a_k\) of that proof. Theorem~\ref{thm:general}'s sum over \(\sigma\in\mathfrak S_k\) is  the statement that a \emph{general} weighted race integrates over all \(k!\) chambers, not the single ordered one of \(\Delta_k\); Section~\ref{sec:regenerative} identifies exactly when this sum collapses back to the single-chamber form \eqref{eq:DI}.
\end{remark}

The classic predicting rule in these combinatorial coordinates becomes a splitting/insertion rule: a new point finds the gap between \(A_r\) and \(A_{r-1}\) with probability \((A_{r-1}-A_r)/A_0\) and settles there at random, creating the transition
\begin{equation}
(A_0,A_1,\ldots, A_k) \longrightarrow (A_0+1, \ldots, A_{r-1}+1, \widetilde{A},  A_r,\ldots,A_{k+1}),
\end{equation}
where the newly generated intermediate level \(\widetilde{A}\) is distributed uniformly over the discrete range \(\{A_r+1,\ldots,A_{r-1}\}\).

Starting from strengths \(X\geq0\), apply the probability integral transform. For exponential strengths one may write \(U=e^{-X}\). More generally, \(U=1-F(X)\). The benchmark hierarchy becomes an ordered sequence
$1>s_1>s_2>\cdots>0,$
which is a convenient distribution-free coordinate system.

Let \(a=(a_1,a_2,\dots)\) record occupancies between neighbouring benchmark levels. Define tail counts
\[
A_i=a_i+a_{i+1}+\cdots.
\]
Then \(a_i=A_i-A_{i+1}\). The quantities \(A_i\) count future observations exceeding benchmark level \(s_i\). They are therefore comparison counts rather than occupancy counts.

\begin{definition}[Dimension formula]
\label{def:dimension}
Let \(A_1>A_2>\cdots>A_d>0\), \(A_{d+1}=0\). Then
\[
\dim(A)=\frac{A_1!}{\prod_i(A_i-A_{i+1})!}=\prod_i\binom{A_{i-1}}{A_i},
\]
with \(A_0:=A_1\). The multinomial dimension factors into a chain of binomial coefficients.
\end{definition}

\begin{remark}[Boundary via survival functions]
\label{rem:boundary}
The boundary point is represented by the benchmark sequence \(1>s_1>s_2>\cdots>0\). The Martin kernel takes the form
\[
K_A(s)=\Bigl(\prod_i\binom{A_{i-1}}{A_i}\Bigr)\prod_i(s_i-s_{i+1})^{A_i-A_{i+1}}.
\]
The quantities \(s_i\) are deterministic coordinates of a boundary point, obtained as limiting survival probabilities \(s_i=\lim_n\PP[X>s_i\mid\text{hierarchy at level }n]\); randomness enters only through the random benchmark process which generates those coordinates. This is the survival-coordinate representation of the classical Martin-boundary description of the Kingman graph and its refinement by compositions.
\end{remark}

\subsection{From decreasing to increasing signatures}

The natural coordinates of the benchmark model are the strictly decreasing tail sequences 
\[
W=(W_1>W_2>\cdots>W_d>0),\qquad W_i\in\mathbb{N}.
\]
(The weak form \(W_1\geq W_2\geq\cdots\) is obtained by allowing equal consecutive values, corresponding to empty occupancy intervals.) These sequences are the \emph{decreasing signatures}. The associated occupancy (difference) composition is
\[
w_i=W_i-W_{i+1}\qquad(W_{d+1}=0),
\]
so that \(w=(w_1,w_2,\dots)\) is a composition of \(W_1\) -- distinct from a benchmark rate despite the shared letter, since here \(w_i\) is a coordinate of a single fixed signature rather than an environment weight. The reverse (increasing) signature is obtained by reading the parts of \(w\) from right to left, or equivalently by conjugating the diagram.

In the theory of quasisymmetric functions \cite{stanley1972,StanleyEC2,GesselQuasi1984} the two natural bases are indexed by these signatures. The monomial basis \(M_w\) is adapted to the occupancy coordinates \(w\); the fundamental (Gessel) basis \(F_W\) is adapted to the tail coordinates \(W\):
\[
F_W=\sum_{\Gamma\succeq W}M_\Gamma,
\]
where the sum runs over all refinements. The involution that reverses a composition interchanges the two bases (up to the reverse map) and therefore interchanges the geometric meanings ``occupancy between consecutive benchmarks'' and ``number of future points that exceed a given benchmark level''.

The branching graph of decreasing signatures has vertices the finite strictly decreasing sequences of positive integers, graded by the top entry \(n=W_1\). An edge \(W\to W'\) exists when \(W'\) is obtained from \(W\) by increasing one entry by \(1\) while preserving the strict inequalities (or by adjoining a new final entry equal to \(1\)). A few levels are shown below (labels written in compact form: \(21\) means \((2,1)\), \(32\) means \((3,2)\), etc.).

\begin{center}
\begin{tikzpicture}[
  every node/.style={font=\footnotesize, draw, rounded corners, minimum width=0.9cm, minimum height=0.48cm, inner sep=1.2pt, align=center},
  arr/.style={-{Stealth[length=1.4mm]}, thick, gray!65},
  level/.style={font=\scriptsize\itshape, gray}
  ]
  \node (empty) at (0,0) {$\emptyset$};
  \node (1) at (0,-1.25) {$1$};
  \node (2) at (-1.8,-2.5) {$2$};
  \node (21) at (1.8,-2.5) {$21$};
  \node (3) at (-3.6,-3.75) {$3$};
  \node (32) at (-1.2,-3.75) {$32$};
  \node (31) at (1.2,-3.75) {$31$};
  \node (321) at (3.6,-3.75) {$321$};
  \draw[arr] (empty) -- (1);
  \draw[arr] (1) -- (2);
  \draw[arr] (1) -- (21);
  \draw[arr] (2) -- (3);
  \draw[arr] (2) -- (32);
  \draw[arr] (21) -- (32);
  \draw[arr] (21) -- (31);
  \draw[arr] (21) -- (321);
  \node[level, left] at (-5.0,0) {$n=0$};
  \node[level, left] at (-5.0,-1.25) {$n=1$};
  \node[level, left] at (-5.0,-2.5) {$n=2$};
  \node[level, left] at (-5.0,-3.75) {$n=3$};
\end{tikzpicture}
\end{center}


\subsection{Sampling formula}

The general composition law of Theorem~\ref{thm:general} specialises, under the decreasing-signature coordinates, to an explicit sampling formula for the occupancy vector \(\mu\) (or equivalently for the tail sequence \(W\)). When the benchmarks are the ordered points of a unit-rate Poisson process (matched-rate case) the composition is uniform on all compositions of \(n\), which in \(W\)-coordinates becomes

\[
\PP[W]=\frac1{\binom{n+k}{k}}\qquad\text{for every }W\text{ with }W_1=n\text{ and  }k\text{ parts}.
\]
In the general weighted case the probability of a concrete path that realises a given \(W\) is the product of the successive Beta integrals appearing in~\eqref{eq:general}. Sequentially, the predictive probability that the next future point falls into the cell governed by residual weight \(W\) is proportional to the normalised residual weight:
\[
\PP[\text{next point in cell }r\mid\text{current residuals}]\ \propto\ W_r.
\]
This is the direct sampling rule induced by the size-biased residual spacings of Theorem~\ref{thm:spacings}.

\section{Nonhomogeneous stick-breaking and the Bernoulli sieve}
\label{sec:ewens}

The Ewens sampling formula itself is classical and is not re-derived at length here; what is worth deriving carefully is the more general mechanism it sits inside, since equal slot widths are only the simplest choice available.

Partition the past into consecutive slots of widths \(\theta_1,\theta_2,\dots>0\), not necessarily equal,
\[
w_j=\Bigl[-\textstyle\sum_{i\le j}\theta_i,\,-\sum_{i<j}\theta_i\Bigr),\qquad j=1,2,3,\dots,
\]
and let \(\beta_j\) be the minimum strength in slot \(w_j\). Since disjoint regions of a PPP are independent, \(\beta_1,\beta_2,\dots\) are independent with \(\beta_j\stackrel{\rm d}{=}\Exp(\theta_j)\) -- identical only if the widths are.

\begin{proposition}[Nonhomogeneous stick-breaking]
\label{prop:gem}
Let \(X\stackrel{\rm d}{=}\Exp(1)\) be a single future point, independent of the hierarchy, and let \(P_j=\PP[X\in C_j\mid\beta_1,\beta_2,\dots]\), \(C_j=(S_{j-1},S_j)\), \(S_j=\beta_1+\cdots+\beta_j\). Then \(V_j:=e^{-\beta_j}\stackrel{\rm d}{=}{\rm B}(\theta_j,1)\) independently across \(j\), and \(P_j=(1-V_j)\prod_{l<j}V_l\). At \(\theta_j\equiv\theta\) this is the defining stick-breaking representation of \(\GEM(\theta)\); for general \((\theta_j)\) it is an independent, not necessarily homogeneous, stick-breaking sequence.
\end{proposition}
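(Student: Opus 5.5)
Conditioning on the hierarchy \((\beta_j)\) makes the proof a direct computation with the exponential survival function. The distributional claim about \(V_j\) comes first. Since the slots \(w_j\) are disjoint time intervals of lengths \(\theta_j\), the restrictions of \(\Pi_-\) to the strips \(w_j\times\RR_+\) are independent Poisson processes. Hence the minima \(\beta_j\) are independent, with \(\beta_j\stackrel{\rm d}{=}\Exp(\theta_j)\), as recorded in Section~\ref{sec:setup}. For \(v\in(0,1)\),
\[
\PP[e^{-\beta_j}\le v]=\PP[\beta_j\ge -\log v]=v^{\theta_j},
\]
which is the distribution function of \({\rm B}(\theta_j,1)\). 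Independence of the \(V_j\) is inherited from that of the \(\beta_j\).

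Next comes the cell probability. Given the hierarchy, the future point \(X\stackrel{\rm d}{=}\Exp(1)\) is independent of it, so
\[
\PP[X\in(S_{j-1},S_j)\mid\beta]=e^{-S_{j-1}}-e^{-S_j}=e^{-S_{j-1}}\bigl(1-e^{-\beta_j}\bigr).
\]
Since \(e^{-S_{j-1}}=\prod_{l<j}e^{-\beta_l}=\prod_{l<j}V_l\), this equals \((1-V_j)\prod_{l<j}V_l\), as claimed. This is the \(k\to\infty\) analogue of the stick-breaking coordinates already used for Theorem~\ref{thm:general}. There the spacings were residual exponentials; here they are the \(\beta_j\) themselves, laid end to end.

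The only point needing care, and the step I would flag as the main (mild) obstacle, is that \((P_j)\) is a genuine partition of unity, i.e. \(S_j\to\infty\) almost surely. Equivalently, \(\prod_{l\le j}V_l\to0\). Without this the cells would leave a defect mass \(e^{-S_\infty}\) above \(S_\infty=\lim S_j\). By independence and the Kolmogorov three-series (or two-series) theorem, \(\sum_j\beta_j=\infty\) a.s. if and only if \(\sum_j\EE\min(\beta_j,1)=\infty\). Since \(\EE\min(\beta_j,1)\asymp\min(1,1/\theta_j)\), this holds if and only if \(\sum_j\min(1,1/\theta_j)=\infty\). The condition is automatic whenever \(\theta_j\) stays bounded, but not for rapidly growing widths. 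I would therefore either state this mild condition explicitly or note that otherwise the residual mass \(\prod_j V_j\) is an extra top cell; this is harmless to the formula for the \(P_j\) themselves, which holds in all cases.

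Finally, specialise to \(\theta_j\equiv\theta\). The \(V_j\) are then i.i.d. \({\rm B}(\theta,1)\), so the \(1-V_j\) are i.i.d. \({\rm B}(1,\theta)\). The sequence \((P_j)\) is therefore exactly the defining \(\GEM(\theta)\) stick-breaking, and the convergence condition above trivially holds.
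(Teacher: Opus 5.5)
Your proof is correct and follows the paper's argument: $\PP[e^{-\beta_j}\le v]=v^{\theta_j}$ identifies ${\rm B}(\theta_j,1)$, independence comes from the disjoint slots, and $P_j=e^{-S_{j-1}}-e^{-S_j}$ telescopes to $(1-V_j)\prod_{l<j}V_l$. Your extra remark is also correct, and the paper leaves it unaddressed: the $P_j$ sum to one only when $\sum_j\min(1,1/\theta_j)=\infty$, and otherwise a defect mass $\prod_j V_j$ remains. This does not affect the stated formula for each $P_j$.
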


\begin{proof}
As in Section~\ref{sec:race}, \(\beta_j\sim\Exp(\theta_j)\) gives \(V_j=e^{-\beta_j}\sim{\rm B}(\theta_j,1)\), and \(P_j=e^{-S_{j-1}}-e^{-S_j}=(1-V_j)\prod_{l<j}V_l\) by telescoping. Independence of the \(\beta_j\) across disjoint slots gives independence of the \(V_j\).
\end{proof}

\begin{theorem}[Ewens sampling formula; Ewens 1972, Kingman 1975]
\label{thm:esf}
At \(\theta_j\equiv\theta\), with \(X_1,\dots,X_n\) a future sample of size \(n\) and \(A_j=\#\{i:X_i\in C_j\}\), the exchangeable partition induced by ``same cell'' has distribution
\begin{equation}
\label{eq:esf}
\PP[a_1,\dots,a_n]=\frac{n!}{\theta^{(n)}}\prod_{j=1}^n\Bigl(\frac\theta j\Bigr)^{a_j}\frac1{a_j!},
\qquad\theta^{(n)}=\theta(\theta+1)\cdots(\theta+n-1).
\end{equation}
\end{theorem}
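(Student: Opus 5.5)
The plan is to reduce to the stick-breaking representation of Proposition~\ref{prop:gem} with \(\theta_j\equiv\theta\), and to compute the probability of a given \emph{ordered} outcome by a sequential, Chinese-restaurant-type predictive rule. The unordered partition law \eqref{eq:esf} then follows by counting the ordered outcomes compatible with a given block-size pattern. Here \(a_j\) denotes the number of blocks of size \(j\).

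First, I work with the record (discovery) order of occupied cells rather than the cell index. By memorylessness, as in the proof of Proposition~\ref{thm:joint-two}, the strengths \(X_1,\dots,X_n\) together with the benchmarks \(S_1<S_2<\cdots\) can be revealed in increasing order on the strength scale. Consider the current lowest unrevealed cell boundary \(S_j\). At each stage the competitors are the remaining future points, each of rate \(1\), and the next boundary, with residual rate \(\theta\); the latter is constant because all slots have width \(\theta\) and the spacings \(\beta_j\) are i.i.d.\ \(\Exp(\theta)\).

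Rather than reveal order, I will use the equivalent marginal computation. Conditionally on \(V\), the sample is i.i.d.\ from \((P_j)\). Integrating over independent \(V_j\sim{\rm B}(\theta,1)\) as in the proof of Theorem~\ref{thm:general}, the probability that the occupied cells have, in increasing order, sizes \(n_1,\dots,n_k\) (empty cells are allowed in between) is
\[
\binom{n}{n_1,\dots,n_k}\prod_{r=1}^k \frac{\theta\,(n_r-1)!\,\Gamma(\theta+N_{r+1})}{\Gamma(\theta+N_r)}\cdot(\text{empty-cell factors}),
\qquad N_r=n_r+\cdots+n_k .
\]
Each empty cell preceding a block contributes \(\theta/(\theta+N_r)\). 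Summing the resulting geometric series over the number of empty cells gives the factor \(\bigl(1-\theta/(\theta+N_r)\bigr)^{-1}=(\theta+N_r)/N_r\).

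Collecting terms, the product telescopes to the classical size-biased formula
\[
\frac{n!\,\theta^k}{\theta^{(n)}}\prod_{r=1}^k\frac{1}{N_r}.
\]
This is the \(\GEM(\theta)\) composition law in the Donnelly--Tavaré form, and it is the analogue of the single-chamber Dirichlet collapse of Lemma~\ref{lem:DI}, with the \(N_r\) playing the role of the tail counts \(A_i\).

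The final step passes from the ordered composition \((n_1,\dots,n_k)\) to the unordered partition of \([n]\), and I expect this to be the main obstacle. I will sum \(\prod_r 1/N_r\) over all orderings of the multiset of block sizes. The key identity is that, for fixed sizes \(m_1,\dots,m_k\),
\[
\sum_{\pi\in\mathfrak S_k}\prod_{r=1}^k\frac{1}{m_{\pi(r)}+\cdots+m_{\pi(k)}}=\prod_{i=1}^k\frac1{m_i}.
\]
It follows by induction on \(k\), conditioning on the first block and using \(\sum_i m_i/n=1\); equivalently it is the size-biased permutation property of Proposition~\ref{prop:size-biased}. Two bookkeeping points must be handled carefully: the multinomial counts labelled set-assignments, and orderings of equal-size blocks must be divided by \(\prod_j a_j!\). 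After these, the probability of a given set partition is \(\theta^k\prod_i(m_i-1)!/\theta^{(n)}\). Multiplying by the number \(n!/\prod_j (j!)^{a_j}a_j!\) of set partitions of type \((a_1,\dots,a_n)\) yields \eqref{eq:esf}.
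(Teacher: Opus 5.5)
Your strategy works and your endpoints are right, but one displayed step is wrong as written and must be corrected. The paper gives no proof of this theorem: it cites Ewens and Kingman, and later identifies the construction with the $(0,\theta)$ regenerative composition of Gnedin--Pitman. Your route fits the paper's own framework. You integrate the multinomial likelihood against independent ${\rm B}(\theta,1)$ factors as in Theorem~\ref{thm:general}, sum out the empty cells, and then symmetrise with the size-biased identity. Once the step below is repaired, this is a complete derivation.

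\textbf{The occupied-cell factor is off by one.} For the cell carrying block $r$ the integrand is $(1-V)^{n_r}V^{N_{r+1}}$. By $\EE[(1-V)^pV^q]=\theta\,\mathrm{B}(q+\theta,p+1)$, the identity used in the proof of Theorem~\ref{thm:general}, its mean is
\[
\frac{\theta\,n_r!\,\Gamma(\theta+N_{r+1})}{\Gamma(\theta+N_r+1)}.
\]
Your factor $\theta\,(n_r-1)!\,\Gamma(\theta+N_{r+1})/\Gamma(\theta+N_r)$ is $\theta\,\mathrm{B}(\theta+N_{r+1},n_r)$ instead. With your factor the product does not telescope to the formula you state. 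It overshoots by $\prod_r(\theta+N_r)/n_r$, and at $n=k=1$ it gives $\theta+1>1$. With the correct factor three cancellations occur:
\begin{enumerate}[label=(\roman*)]
\item the extra $\theta+N_r$ inside $\Gamma(\theta+N_r+1)$ cancels against your geometric factor $(\theta+N_r)/N_r$;
\item $n_r!$ cancels the multinomial denominator;
\item $\prod_r\Gamma(\theta+N_{r+1})/\Gamma(\theta+N_r)=1/\theta^{(n)}$.
\end{enumerate}
This gives $n!\,\theta^k/(\theta^{(n)}\prod_rN_r)$ as claimed. The reveal-order computation you set aside also yields this law directly, with no Beta integrals. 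With $m$ points left, a specific point comes next with probability $1/(m+\theta)$, and the next boundary with probability $\theta/(m+\theta)$.

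\textbf{The final bookkeeping mixes two routes.} Taken literally, your description would double count $\prod_ja_j!$. There are two clean ways to finish; use one of them.
\begin{enumerate}[label=(\alph*)]
\item Via set partitions. The $k!$ orderings of the blocks of a fixed set partition are $k!$ distinct ordered set partitions, since the blocks are distinct sets even when they have equal size. So you divide the composition law by the multinomial and sum over all of $\mathfrak S_k$ using your identity. This gives $\theta^k\prod_i(m_i-1)!/\theta^{(n)}$ with no division by $\prod_ja_j!$. That factor enters only through the count $n!/\prod_j(j!)^{a_j}a_j!$ of set partitions of type $(a_1,\dots,a_n)$.
\item More directly, sum the composition law over the $k!/\prod_ja_j!$ distinct rearrangements of the size multiset. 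Your identity then gives
\[
\frac{n!\,\theta^k}{\theta^{(n)}}\cdot\frac{1}{\prod_ja_j!\,\prod_jj^{a_j}},
\]
which is \eqref{eq:esf} with no further counting.
\end{enumerate}
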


The genuinely general case -- widths \((\theta_j)\) varying arbitrarily rather than constant or growing linearly as in Section~\ref{sec:constrained} -- produces an infinite-dimensional family of independent stick-breaking hierarchies, of the kind studied under the name of the \emph{Bernoulli sieve} \cite{GnedinPartitionsConstraints,GnedinIksanovMarynych2010,gnedin2010bernoulli} and, in its full record-dependent generality, in \cite{GnedinBiasRecord}. As one explicit instance, geometrically decaying widths \(\theta_j=\theta r^{j-1}\), \(r\in(0,1)\), give \(V_j\sim{\rm B}(\theta r^{j-1},1)\) independently, and
\[
\PP[X\in C_1]=1-e^{-\theta},\qquad \PP[X\in C_j\mid X\notin C_1,\dots,C_{j-1}]=1-e^{-\theta r^{j-1}}\to0\text{ as }j\to\infty,
\]
a hierarchy whose cells shrink geometrically in capture probability rather than staying stochastically constant as GEM's do. Whether the general \((\theta_j)\) construction reaches every law studied under the Bernoulli-sieve name, or only the sub-class with Beta-distributed survival factors at each step, is left open here.

\begin{remark}[Consistency checks]
\label{rem:esf-check}
\(P_1=1-e^{-\beta_1}\stackrel{\rm d}{=}{\rm B}(1,\theta)\), so \(\PP[A_1=a]=\binom na\,\theta\,{\rm B}(a+1,\theta+n-a)\) -- exactly the single-benchmark law of Section~\ref{sec:one-benchmark} applied to \(\beta_1\stackrel{\rm d}{=}\Exp(\theta)\). At \(\theta=1\) the slots \(w_j\) are the original unit slots of Section~\ref{sec:one-benchmark}, and \(\PP[A_1=a]=1/(n+1)\) for every \(a=0,\dots,n\) recovers \eqref{eq:uniform}.
\end{remark}

\section{The finite exponential race and Kingman's coalescent}
\label{sec:coalescent}

Kingman's coalescent \cite{Kingman1982} is the continuous-time Markov chain on partitions of \([n]\), starting from the partition into \(n\) singletons, in which every pair of currently existing blocks merges at rate \(1\), independently of the sizes of the blocks. When there are \(b\) blocks, the waiting time to the next merger is \(\Exp\binom{b}{2}\), and the process runs until a single block remains.

We record here the exact connection between this genealogical process and the finite, value-ranked benchmark hierarchy of Sections~\ref{sec:matched}--\ref{sec:race}, and we are careful to distinguish it from a different, and non-equivalent, construction one might be tempted to write down.

\begin{theorem}[Coalescent block sizes via matched-rate benchmarks]
\label{prop:coalescent-blocks}
Let \(B_1,\dots,B_k\) be i.i.d.\ \(\Exp(1)\) benchmarks and let the future sample of size \(n\) be generated by \({\rm S}\). Conditionally on every one of the \(k+1\) cells being nonempty, the sizes \((A_0,\dots,A_k)\), regarded as an unordered multiset attached to specific future individuals, have  the same distribution as the block sizes of Kingman's \(n\)-coalescent at the moment it has \(b=k+1\) blocks.
\end{theorem}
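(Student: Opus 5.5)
The plan is to reduce the claim to a comparison of two exchangeable random partitions of \([n]\), one on each side, and then to match them through the explicit law of Kingman's coalescent.

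\emph{Benchmark side.} Label the future individuals \(1,\dots,n\). The partition of \([n]\) into the cells \(C_0,\dots,C_k\) cut out by \(B_{(1)}<\dots<B_{(k)}\) is exchangeable, because the strengths \(X_i\) are i.i.d.\ and independent of the benchmarks. By Proposition~\ref{prop:matched}, the ordered size vector \((A_0,\dots,A_k)\) is uniform over all compositions of \(n\) into \(k+1\) parts. Conditioning on every part being positive leaves the uniform law over the \(\binom{n-1}{k}\) compositions with positive parts.

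Given the composition, the identities of the individuals in each cell form a uniformly random set partition, again by exchangeability of the \(X_i\). Fix a set partition \(\pi\) of \([n]\) with \(b=k+1\) blocks of sizes \(\lambda_1,\dots,\lambda_b\). It arises from \(b!\) ordered assignments of its blocks to cells. Each assignment has probability
\[
\binom{n-1}{b-1}^{-1}\frac{\prod_i\lambda_i!}{n!},
\]
the first factor being the composition probability and the second the probability of that particular filling given the sizes. Summing over the \(b!\) assignments gives
\[
\PP[\pi]=\frac{b!\,(b-1)!\,(n-b)!}{n!\,(n-1)!}\prod_{i=1}^b\lambda_i! .
\]

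\emph{Coalescent side.} I would prove Kingman's formula: the partition of the \(n\)-coalescent at the moment it has \(b\) blocks has exactly the same law,
\[
\PP[\pi]=c_{n,b}\prod_i\lambda_i!,\qquad c_{n,b}=\frac{b!\,(b-1)!\,(n-b)!}{n!\,(n-1)!}.
\]
Only the jump chain matters, since every pair of blocks merges at the same rate \(1\). The proof is by downward induction on \(b\), starting from \(b=n\), where \(c_{n,n}=1\) and \(\pi\) is the partition into singletons.

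For the step, \(\pi\) with \(b\) blocks is reached from a partition \(\pi'\) with \(b+1\) blocks exactly when \(\pi'\) is obtained by splitting one block of \(\pi\) into two nonempty pieces. The merger is then chosen with probability \(1/\binom{b+1}{2}\). For a block of size \(\lambda\), the sum of \(j!\,(\lambda-j)!\) over unordered splits is
\[
\tfrac12\sum_{j=1}^{\lambda-1}\binom{\lambda}{j}\,j!\,(\lambda-j)!=\tfrac12(\lambda-1)\,\lambda!.
\]
Summing over the block being split, and using \(\sum_i(\lambda_i-1)=n-b\), yields the recursion
\[
c_{n,b}=c_{n,b+1}\,\frac{n-b}{2\binom{b+1}{2}}=c_{n,b+1}\,\frac{n-b}{b(b+1)}.
\]
The closed form above satisfies this recursion, which completes the induction.

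\emph{Conclusion and main obstacle.} Since the two laws of \(\pi\) coincide, the block sizes attached to individuals agree in distribution, and in particular so do the unordered multisets of sizes. The step I expect to be the main obstacle is the inductive verification of Kingman's formula, specifically the correct counting of unordered splits (the factor \(\tfrac12\)) and checking the constant \(c_{n,b}\) against the recursion. The benchmark side is a direct consequence of Proposition~\ref{prop:matched}.
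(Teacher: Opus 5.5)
Your proposal is correct and follows essentially the same route as the paper: on both sides you compute the probability of a specific labelled set partition as a constant times $\prod_i\lambda_i!$, and for the coalescent you use downward induction from $b=n$ with the same half-count of unordered splits and the same recursion $c_{n,b}=c_{n,b+1}\,(n-b)/(b(b+1))$. The only cosmetic difference is that you write the constant in closed form, $b!\,(b-1)!\,(n-b)!/(n!\,(n-1)!)$, and check it against the recursion, whereas the paper checks that the benchmark constant $b!/(n!\binom{n-1}{b-1})$, which is the same quantity, satisfies the recursion.
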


\begin{proof}
Write \(b=k+1\). It suffices to compare, for every specific labelled partition of \(\{1,\ldots,n\}\) into blocks of sizes \(n_1,\ldots,n_b>0\) (summing to \(n\)), the probability each model assigns to it.

\emph{Benchmark model.} Conditioned on nonemptiness, \((A_0,\ldots,A_k)\) is uniform over the \(\binom{n-1}{b-1}\)  compositions of \(n\) with \(b\) positive parts, and given the composition, the assignment of specific future individuals to specific cells is uniform over the \(n!/\prod n_i!\) possibilities. A specific labelled \emph{set} partition corresponds to \(b!\) such outcomes (one for each assignment of the \(b\) specific blocks to the \(b\) ordered cells), each of probability \(\bigl[1/\binom{n-1}{b-1}\bigr]\cdot\bigl[\prod n_i!/n!\bigr]\). Hence
\begin{equation}
\label{eq:cbench}
\PP_{\rm bench}(\text{specific partition})=c_b^{\rm bench}\prod_i n_i!,\qquad
c_b^{\rm bench}=\frac{b!}{n!\binom{n-1}{b-1}}.
\end{equation}

\emph{Coalescent.} We show by downward induction on \(b\), from \(b=n\) to \(b=1\), that 
$$\PP_{\rm coal}(\text{specific partition})=c_b^{\rm coal}\prod_i n_i!$$
for a constant \(c_b^{\rm coal}\) depending only on \((n,b)\), not on the partition. At \(b=n\), the unique partition into singletons has probability \(1=c_n^{\rm coal}\), so \(c_n^{\rm coal}=1\). For the inductive step, fix a target partition at \(b-1\) blocks of sizes \(n_1,\ldots,n_{b-1}\). Since the coalescent merges a uniformly random \emph{pair} of the \(b\) current blocks, independently of block sizes, the target is reached exactly when, at the previous step, some target block \(i\) was instead present as two separate parts \((A,B)\) with \(A\sqcup B\) equal to that block, and this specific pair merged. An elementary count gives, summed over the unordered nonempty splits of an \(m\)-set,
\[
\sum_{A\subsetneq[m],\,A\ne\emptyset}\tfrac12\,|A|!\,(m-|A|)!=\tfrac12\sum_{j=1}^{m-1}\binom mj j!(m-j)!=\tfrac12(m-1)\,m!.
\]
Summing over which of the \(b-1\) target blocks was the freshly merged one,
\[
\PP_{\rm coal}(\text{target})=\frac1{\binom b2}\sum_{i=1}^{b-1}c_b^{\rm coal}\Bigl(\prod_{j\ne i}n_j!\Bigr)\cdot\tfrac12(n_i-1)\,n_i!
=c_b^{\rm coal}\Bigl(\prod_jn_j!\Bigr)\cdot\frac{\sum_i(n_i-1)}{2\binom b2},
\]
and since \(\sum_{i=1}^{b-1}(n_i-1)=n-(b-1)\), this gives a recursion independent of the target partition:
\begin{equation}
\label{eq:crecur}
c_{b-1}^{\rm coal}=c_b^{\rm coal}\cdot\frac{n-b+1}{b(b-1)},\qquad c_n^{\rm coal}=1.
\end{equation}

\emph{Matching.} The constant \(c_b^{\rm bench}\) of \eqref{eq:cbench} satisfies \(c_n^{\rm bench}=1\) (immediate at \(b=n\)) and, using  \(\binom{n-1}{b-1}=\binom{n-1}{b-2}\cdot\frac{n-b+1}{b-1}\),
\[
c_b^{\rm bench}\cdot\frac{n-b+1}{b(b-1)}
=\frac{(b-2)!}{n!}\cdot\frac{n-b+1}{\binom{n-1}{b-1}}
=\frac{(b-2)!}{n!}\cdot\frac{b-1}{\binom{n-1}{b-2}}
=\frac{(b-1)!}{n!\binom{n-1}{b-2}}=c_{b-1}^{\rm bench},
\]
giving recursion~\eqref{eq:crecur}. Since \(c^{\rm coal}\) and \(c^{\rm bench}\) satisfy the same recursion from the same boundary value at \(b=n\), they coincide for every \(1\le b\le n\); the two laws on specific labelled partitions -- hence on unordered block sizes -- are identical.
\end{proof}


The correspondence is a genuine identity between two combinatorially different objects: a benchmark cell may be empty (no future point at all falls below the first benchmark), whereas a coalescent block is by construction never empty. Conditioning on nonemptiness is therefore not a technicality to be dropped; it is exactly what reconciles the two pictures.


\section{Sampling versus insertion}
\label{sec:insertion}

Sampling classifies a future point against a fixed hierarchy: \(A\mapsto A+e_r\). Insertion allows a future point to become a new benchmark, altering the hierarchy itself. The distinction is not as sharp as the data alone can show: the factorial code of a permutation represents it as a sequence of ranks-among-those-seen-so-far, and this Lehmer-code sequence is generated identically whether the \(i\)-th future point is ranked against a fixed external hierarchy that happens to have revealed exactly \(i\) cells by that time, or against the growing set of the first \(i-1\) future points themselves, now acting as their own benchmarks. Sampling and insertion are different mechanisms, with different consistency properties, but the same rank data is compatible with either story -- an identifiability paradox that the rest of this section resolves by treating the two as distinct constructions rather than attempting to tell them apart after the fact.

Sampling remains fundamental despite this: Section~\ref{sec:ewens} shows that sampling alone, against a suitably infinite and slot-ordered hierarchy, already produces the Ewens sampling formula, with no reinforcement of the past by the future required. Section~\ref{sec:record-model} realises insertion concretely: each future point tests itself against the current extremes of everything seen so far and, with the appropriate probability, becomes a new benchmark in its own right. This yields the two-parameter \((\theta,\zeta)\) record family, sufficient for the joint count of lower and upper records -- a genuine second parameter, but not the Ewens--Pitman discount; that one is reached differently, in Section~\ref{sec:constrained}.

Predictive insertion probabilities, when insertion is present, follow at once from the composition law by Bayes:
\[
\PP[A\to A+e_r\mid A]\propto\frac{\PP[A+e_r]}{\PP[A]}.
\]
No separate construction is required.

A choice operator need not retain a single point per slot: it may instead run a tournament among several points and retain the outcome of a comparison rule with more than one round. Fix an integer \(\nu\), pair up \(2\nu\) i.i.d.\ points, retain the pairwise minima, and then retain the maximum of those minima. The resulting residual weight follows the Topp--Leone law, density \(2\nu\,p^{\nu-1}(1-p)(2-p)^{\nu-1}\) on \((0,1)\), with rational binomial moments and a closed-form predictive rule for every integer \(\nu\); at \(\nu=1\) it is the ordinary matched-rate case of Section~\ref{sec:matched}. This is a genuinely non-monotone choice operator -- the first round favours small values, the second favours the larger of the survivors -- unlike every coordinatewise-minimum construction used elsewhere in this paper, and it is the source of the tournament structure taken up again in Section~\ref{sec:power-product}.

Insertion mechanisms richer than Section~\ref{sec:record-model}'s -- an arbitrary sub-process of the past quadrant used as a splitter, or a choice operator producing fixed points -- are left to a companion paper; a full treatment needs to weigh the exchangeable-selection framework of \cite{GnedinKrengel1996} against the present geometry, which is more than a single section can honestly carry.

\section{Insertion realised: the Foster--Stuart record model}
\label{sec:record-model}

We now realise the insertion mechanism promised in Section~\ref{sec:insertion}, following \cite{GnedinRecords2007,GnedinBiasedRecords2011}
Let \(X_1,X_2,\dots\) be the future population sampled by \({\rm S}\), regarded as a growing sequence rather than a fixed batch of size \(n\). Instead of testing \(X_n\) against a separately drawn past \(\Pi_-\), let the benchmarks be generated by the sequence itself: \(X_j\) is a \emph{lower record} if \(X_j<\min(X_1,\dots,X_{j-1})\), an \emph{upper record} if \(X_j>\max(X_1,\dots,X_{j-1})\), and \(X_1\) is the (improper) \emph{centre}. Write \(\ell,u\) for the numbers of proper lower and upper records among \(X_1,\dots,X_n\), and \(\pi_n\) for the ranking permutation of \((X_1,\dots,X_n)\). The pair \((\ell,u)\) is exactly the statistic underlying the classical Foster--Stuart test for trend \cite{FosterStuart1954}, a linear function of the numbers of upper and lower records in a time series; the model below is its natural two-parameter exchangeable generalisation.

\begin{proposition}[Two-parameter sufficiency, {\cite[Prop.~2]{GnedinRecords2007}}]
\label{prop:record-family}
For \(\theta,\zeta>0\),
\[
\PP^{(\theta,\zeta)}_n(\pi_n)=\frac{\theta^\ell\zeta^u}{(\theta+\zeta)_{n-1}},\qquad
(\theta+\zeta)_{n-1}:=(\theta+\zeta)(\theta+\zeta+1)\cdots(\theta+\zeta+n-2),
\]
defines a law on rankings of \(n\) points under which \(\pi_n\) is uniform conditionally on \((\ell,u)\); at \(\theta=\zeta=1\) it is the ranking law of an i.i.d.\ sample, as throughout Sections~\ref{sec:one-benchmark}--\ref{sec:race}.
\end{proposition}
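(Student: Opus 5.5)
The plan is to build $\pi_n$ sequentially through the factorial (Lehmer) code and show the proposed weights come from a consistent chain of conditional insertion probabilities. Describe $\pi_n$ by the relative ranks $r_j\in\{1,\dots,j\}$ of $X_j$ among $X_1,\dots,X_j$, for $j=1,\dots,n$. The map $\pi_n\leftrightarrow(r_1,\dots,r_n)$ is a bijection onto $\prod_j\{1,\dots,j\}$. For $j\ge2$, $X_j$ is a proper lower record iff $r_j=1$ and a proper upper record iff $r_j=j$. Hence $\ell$ and $u$ are additive functionals of the code:
\[
\ell=\#\{j\ge2:r_j=1\},\qquad u=\#\{j\ge2:r_j=j\}.
\]

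Next, define a sequential insertion rule. Given the first $j-1$ ranks, place the $j$-th point so that $r_j=1$ with probability $\theta/(\theta+\zeta+j-2)$, $r_j=j$ with probability $\zeta/(\theta+\zeta+j-2)$, and each of the $j-2$ interior positions with probability $1/(\theta+\zeta+j-2)$. The numerators sum to $\theta+\zeta+j-2$, so this is a probability distribution for every $j\ge2$. At $j=2$ there are no interior slots and the denominator is $\theta+\zeta$, as it should be.

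Multiplying these factors along the code gives $\theta^{\ell}\zeta^{u}$ in the numerator and $\prod_{j=2}^n(\theta+\zeta+j-2)=(\theta+\zeta)_{n-1}$ in the denominator. This yields exactly the stated formula, so the formula defines a probability law on rankings of size $n$. Normalisation is then automatic, and no separate generating-function identity $\sum_{\pi}\theta^\ell\zeta^u=(\theta+\zeta)_{n-1}$ is needed, although that identity follows as a corollary.

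Consistency in $n$ is also immediate. The restriction of $\pi_{n+1}$ to its first $n$ points is $\pi_n$, and summing out $r_{n+1}$ gives total mass one. So the laws form a projective family, and via Kolmogorov they define a random infinite ranking, equivalently an insertion process of the kind described in Section~\ref{sec:insertion}.

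The probability of a ranking depends on $\pi_n$ only through $(\ell,u)$. It follows that $\pi_n$ is uniform on each level set $\{\ell=l,u=m\}$, which is the sufficiency claim.

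For $\theta=\zeta=1$ each factor is $1/j$, so the product is $1/n!$. This is the uniform ranking law of an i.i.d.\ continuous sample, because Rényi's theorem gives independent uniform relative ranks.

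The only delicate point is bookkeeping. The first point must carry weight one: $X_1$ is the centre and is not counted as a record, which is why the Pochhammer symbol has $n-1$ factors starting at $\theta+\zeta$. The interior weights must be exactly $1$ so that the step-$j$ total matches $\theta+\zeta+j-2$. I expect no real obstacle beyond checking this indexing at $j=2$.
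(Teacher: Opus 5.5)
Your proof is correct and follows essentially the route behind the cited result: the weight $\theta^\ell\zeta^u/(\theta+\zeta)_{n-1}$ is exactly the product, along the Lehmer code, of the insertion probabilities stated in Proposition~\ref{prop:insertion-rule}, and your bookkeeping (no interior slots at $j=2$, denominator $\theta+\zeta+j-2$) matches that rule at $j=n+1$. The paper itself only cites the result and reads the identity in the opposite direction, obtaining the insertion rule from the formula by the Bayes ratio of Section~\ref{sec:insertion}; starting from the insertion rule, as you do, gives normalisation and coherence in $n$ without further work.
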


\begin{proposition}[Insertion rule, {\cite[eq.~(5)]{GnedinRecords2007}}]
\label{prop:insertion-rule}
Under \(\PP^{(\theta,\zeta)}\), the point \(X_{n+1}\), tested against the current extremes, becomes a new lower record with probability \(\theta/(\theta+\zeta+n-1)\), a new upper record with probability \(\zeta/(\theta+\zeta+n-1)\), and otherwise falls into one of the \(n-1\) already-resolved interior ranks, each with probability \(1/(\theta+\zeta+n-1)\).
\end{proposition}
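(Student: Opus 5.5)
The plan is to derive the insertion rule from the ranking law of Proposition~\ref{prop:record-family} by the Bayes recipe of Section~\ref{sec:insertion}:
\[
\PP[\pi_{n+1}\mid\pi_n]=\frac{\PP_{n+1}^{(\theta,\zeta)}(\pi_{n+1})}{\PP_n^{(\theta,\zeta)}(\pi_n)},
\]
where \(\pi_{n+1}\) ranges over the \(n+1\) extensions of \(\pi_n\). The first thing to check is that the family is \emph{consistent}: restricting \(\pi_{n+1}\) to its first \(n\) entries must give \(\pi_n\) with the law \(\PP_n\). Consistency follows once the ratios above are computed and seen to sum to one, so I will compute them first and verify the sum afterwards.

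An extension of \(\pi_n\) is determined by the relative rank \(r\in\{1,\dots,n+1\}\) of \(X_{n+1}\) among \(X_1,\dots,X_{n+1}\); this is the last entry of the factorial code. The key observation is that the record statistics of the first \(n\) points are unchanged by appending a point. So \((\ell,u)\) for \(\pi_{n+1}\) equals \((\ell,u)\) for \(\pi_n\), plus \((1,0)\) if \(r=1\) (a new minimum, i.e.\ a lower record), plus \((0,1)\) if \(r=n+1\) (a new maximum, an upper record), and plus nothing for the \(n-1\) interior positions \(2\le r\le n\).

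For \(n\ge1\) these positions are distinct. The case \(n=1\) is degenerate: there \(X_1\) is the improper centre and only two positions exist, both extreme, and the formula still holds with \(n-1=0\) interior ranks.

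Substituting into the product formula, the Pochhammer denominators give
\[
\frac{(\theta+\zeta)_{n-1}}{(\theta+\zeta)_n}=\frac1{\theta+\zeta+n-1},
\]
so the ratio equals \(\theta/(\theta+\zeta+n-1)\), \(\zeta/(\theta+\zeta+n-1)\), or \(1/(\theta+\zeta+n-1)\) according to the three cases. These sum to \((\theta+\zeta+n-1)/(\theta+\zeta+n-1)=1\), which simultaneously proves consistency and identifies the ratios as genuine conditional probabilities. Because the ratios depend only on \(n\), and not on \(\pi_n\) or the past records, the rule is Markov in the sense stated: \(X_{n+1}\) is tested only against the current extremes.

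The only delicate point I expect is the bookkeeping at the boundary. I need to confirm that the convention in Proposition~\ref{prop:record-family} counts \(X_1\) as neither a lower nor an upper record, so that the \(\ell,u\) increments are exactly as claimed and the normalisation \((\theta+\zeta)_{n-1}\) (empty product \(=1\) at \(n=1\)) matches. A check at small \(n\) with \(\theta=\zeta=1\), recovering the uniform \(1/(n+1)\) insertion of an i.i.d.\ sample, will confirm it.
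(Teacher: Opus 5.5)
Your proposal is correct and follows the route the paper indicates. The paper cites \cite{GnedinRecords2007} and describes the rule as a concrete instance of the Bayes identity of Section~\ref{sec:insertion} applied to the ranking law of Proposition~\ref{prop:record-family}, which is exactly your ratio $\PP_{n+1}(\pi_{n+1})/\PP_n(\pi_n)=\theta^{\Delta\ell}\zeta^{\Delta u}/(\theta+\zeta+n-1)$; your check that these ratios sum to one, giving consistency and hence genuine conditional probabilities, is a detail the paper leaves implicit.
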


This is a concrete instance of the Bayes identity of Section~\ref{sec:insertion}: the two extreme outcomes are exactly the moves that extend the benchmark hierarchy, weighted \(\theta\) and \(\zeta\) against a unit weight for every already-open interior cell.

\begin{theorem}[Two-sided GEM limit, {\cite[Prop.~7]{GnedinRecords2007}}]
\label{thm:two-sided-gem}
As \(n\to\infty\) under \(\PP^{(\theta,\zeta)}\), the scaled record values converge a.s.\ to a limit \((\rho_k)_{k\in\ZZ}\), with \(\rho_0\stackrel{\rm d}{=}{\rm B}(\theta,\zeta)\); conditionally on \(\rho_0\), the sequences \((\rho_k)_{k<0}\) and \((\rho_k)_{k>0}\) are independent, and
\[
\rho_k=\rho_0T_kT_{k+1}\cdots T_{-1}\ (k<0),\qquad
\rho_k=1-(1-\rho_0)Z_1Z_2\cdots Z_k\ (k>0),
\]
with \(T_j\stackrel{\text{i.i.d.}}{\sim}{\rm B}(\theta,1)\), \(Z_j\stackrel{\text{i.i.d.}}{\sim}{\rm B}(\zeta,1)\), all independent of \(\rho_0\).
\end{theorem}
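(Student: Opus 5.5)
The plan is to derive the limit from the insertion rule of Proposition~\ref{prop:insertion-rule}, read as a generalised Pólya urn on the ranks, and to use the conditional uniformity of Proposition~\ref{prop:record-family} to identify the limit laws. Fix the centre $X_1$. Let $L_n$ and $U_n$ be the numbers of points among $X_1,\dots,X_n$ that lie strictly below and strictly above $X_1$. Each lower record lies below the centre, and so does each interior point inserted below it; the same holds above. By the insertion rule, the next point falls below $X_1$ with probability $(\theta+L_n)/(\theta+\zeta+n-1)$: it is either a new lower record (weight $\theta$) or one of the $L_n$ interior slots below the centre (unit weights). So $(L_n,U_n)$ is a two-colour Pólya urn started from $(\theta,\zeta)$, in the sense of Section~\ref{sec:one-benchmark}. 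Martingale convergence then gives $L_n/n\to\rho_0$ a.s., with $\rho_0\sim{\rm B}(\theta,\zeta)$. I read ``scaled record value'' as the limiting fraction of the sample lying below that record, which is the survival-coordinate normalisation of Section~\ref{sec:algebra}.

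Next, condition on the split of points below and above the centre. The subsequence of arrivals below $X_1$ is again an insertion process of the same type, but one-sided: the current minimum carries weight $\theta$ and each interior slot weight $1$. The arrivals above $X_1$ behave the same way with $\zeta$. The two subsequences are driven by independent randomisations given the colour sequence, which gives the conditional independence of $(\rho_k)_{k<0}$ and $(\rho_k)_{k>0}$ given $\rho_0$.

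Within the lower side, take the current lowest record $R$ with $m$ points below the centre. The number of later lower-side points falling below $R$ against those falling between $R$ and the centre is again a two-colour urn, started from $(\theta,1)$. Here the ``1'' is the single slot freshly created at the time $R$ became a record. The fraction of lower-side mass below $R$ therefore converges to ${\rm B}(\theta,1)$. The same holds for the next record relative to $R$, and so on, giving $\rho_{-1}=\rho_0T_{-1}$, $\rho_{-2}=\rho_0T_{-1}T_{-2}$, and in general the product form $\rho_0T_k\cdots T_{-1}$. On the upper side, the fraction of mass \emph{above} each successive upper record is the analogous product of ${\rm B}(\zeta,1)$ factors, which gives $1-\rho_k=(1-\rho_0)Z_1\cdots Z_k$.

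The independence of the factors follows from the regeneration property of the one-sided urn. By Proposition~\ref{prop:record-family}, the ranking is uniform given the record counts, so after a new record is established, the configuration below it evolves like a fresh copy of the process, independent of the past ratios. This is exactly the nonhomogeneous stick-breaking structure of Proposition~\ref{prop:gem}, with constant parameter $\theta$ (respectively $\zeta$).

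The main obstacle is this regeneration step. Each new record starts an urn at a random time, so I must show that the successive limiting ratios are jointly independent Beta variables rather than merely marginally so. My first attempt is a direct moment computation. I would evaluate $\EE\prod_j T_j^{p_j}$ via the explicit product formula $\theta^\ell\zeta^u/(\theta+\zeta)_{n-1}$, summing over permutations with prescribed record positions, and check that it factorises into ${\rm B}(\theta,1)$ moments. Failing that, I would embed the process in the exponential race of Section~\ref{sec:race}, where independence of spacings is given by Theorem~\ref{thm:spacings}.
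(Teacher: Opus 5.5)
The paper gives no proof of this theorem. It quotes \cite{GnedinRecords2007} and only remarks that each side reproduces the $\GEM$ stick-breaking of Section~\ref{sec:ewens}. Your overall plan is the right one: a P\'olya urn for the centre, a split by side, and nested urns for the records. However, two steps do not stand as written.

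\textbf{The nested urn is misstated.} Take the $k$-th lower record $R_k$ with $k\ge2$. When $R_k$ arrives there are $m\ge k$ lower-side points, and all $m$ unit gaps lie between $R_k$ and the centre. So ``below $R_k$ versus between $R_k$ and the centre'' is an urn started from $(\theta,m)$, not $(\theta,1)$. Correspondingly, the fraction of lower-side mass below $R_k$ is $T_1\cdots T_k$, not ${\rm B}(\theta,1)$. The $(\theta,1)$ urn is the one restricted to arrivals falling below the previous record $R_{k-1}$. There the freshly created gap $(R_k,R_{k-1})$ is the only unit slot. After $a$ further points below $R_k$ and $b$ between $R_k$ and $R_{k-1}$, the weights are $\theta+a$ and $1+b$. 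This restricted urn is what your product formula actually uses.

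\textbf{The joint independence is left open, and this is the real gap.} You flag the independence of $\rho_0$, the $T_j$ and the $Z_j$ as the main obstacle, and none of the routes you sketch closes it.
\begin{itemize}
\item Conditional uniformity given $(\ell,u)$ is a fixed-$n$ statement. By itself it does not say that the evolution below a new record is independent of earlier ratios.
\item Proposition~\ref{prop:gem} describes sampling against an external hierarchy, not the a.s.\ limit of a self-generated one.
\item Theorem~\ref{thm:spacings} concerns finitely many external benchmarks with fixed weights.
\item Conditioning on the colour sequence gives independence of the two sides given that sequence, but not independence from $\rho_0$.
\end{itemize}

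The missing idea is the factorisation you already used for the two sides, applied recursively. Split each insertion step hierarchically:
\begin{itemize}
\item first the side of the centre, with weights $\theta+L$ and $\zeta+U$;
\item then, on the lower side, below $R_1$ or in $[R_1,X_1)$;
\item then below $R_2$ or in $[R_2,R_1)$, and so on.
\end{itemize}
Every conditional probability depends only on counts inside the sub-interval being split. One checks that the telescoping product reproduces Proposition~\ref{prop:insertion-rule}.

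Hence the process can be generated from independent sources: a $(\theta,\zeta)$ colour urn, and for each $k$ a $(\theta,1)$ urn consumed, in its own time, only by arrivals below $R_{k-1}$. The upper side is handled the same way with $\zeta$. Each $\rho_0$, $T_k$ and $Z_k$ is the limit of its own source, so the full independence structure of the theorem is immediate.

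Equivalently, the lower side in its own time is a Chinese restaurant process with parameter $\theta$. Its tables, in order of creation, are the intervals $[R_k,R_{k-1})$ with $R_0:=X_1$. A table's weight is its number of gaps, which equals its number of points, and a new record opens a table with weight $\theta$. The classical $\GEM(\theta)$ limit, with $1-W_j\sim{\rm B}(\theta,1)$ independent, then gives the lower half of the theorem directly. No moment computation is needed.
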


\begin{remark}
Conditionally on \(\rho_0\), the lower side alone reproduces exactly the \(\GEM(\theta)\) stick-breaking of Section~\ref{sec:ewens} on \([0,\rho_0]\), and the upper side the \(\GEM(\zeta)\) stick-breaking on \([\rho_0,1]\). Setting \(\zeta=1\) specialises \(\PP^{(\theta,1)}\) to Ewens' distribution, matching Section~\ref{sec:ewens} at the level of the one-sided law; \(\theta=1\) gives the mirror specialisation \(\PP^{(1,\zeta)}\).
\end{remark}

\begin{theorem}[Canonicity, {\cite[Prop.~6]{GnedinRecords2007}}]
\label{thm:canonicity}
Every coherent sequence of laws on rankings of \([n]\), \(n=1,2,\dots\), under which \(\pi_n\) is uniform conditionally on \((\ell,u)\) for every \(n\), is a unique mixture of the \(\PP^{(\theta,\zeta)}\) (including the degenerate boundary cases at \(\theta=0\) or \(\zeta=0\)).
\end{theorem}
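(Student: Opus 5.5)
We prove Theorem~\ref{thm:canonicity} as a boundary statement. The coherent laws with the stated sufficiency form a convex set, and we identify its extreme points with the $\PP^{(\theta,\zeta)}$. The engine is the Martin-boundary scheme of the Law of Succession.

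The first step is to encode the problem as a branching graph. Since $\pi_n$ is uniform given $(\ell,u)$, a coherent family is determined by the marginal laws of $(\ell_n,u_n)$. Coherence says these marginals form a central measure on the graph with vertices $(n,\ell,u)$, $0\le \ell+u\le n-1$. From $(n,\ell,u)$ there are three kinds of edge:
\begin{itemize}
\item to $(n+1,\ell+1,u)$, with multiplicity $1$ (insertion at the bottom);
\item to $(n+1,\ell,u+1)$, with multiplicity $1$ (insertion at the top);
\item to $(n+1,\ell,u)$, with multiplicity $n-1$ (insertion in an interior gap).
\end{itemize}
This follows from Proposition~\ref{prop:insertion-rule}: appending $X_{n+1}$ to a ranking of $[n]$ changes $(\ell,u)$ exactly as listed, and the number of rankings reaching a given extension is counted by these multiplicities. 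Consequently
\[
\PP_n(\ell,u)=\varphi(n,\ell,u)\,d(n,\ell,u),
\]
where $d(n,\ell,u)$ is the number of rankings of $[n]$ with $\ell$ lower and $u$ upper records. The function $\varphi$ is harmonic:
\[
\varphi(n,\ell,u)=\varphi(n+1,\ell+1,u)+\varphi(n+1,\ell,u+1)+(n-1)\varphi(n+1,\ell,u).
\]
The mixture statement therefore reduces to describing the minimal nonnegative harmonic functions, by Choquet theory on a compact convex metrisable set together with uniqueness of barycentric decomposition (the set is a simplex because the graph is a projective system).

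The second step is to exhibit the candidates. By Proposition~\ref{prop:record-family}, $\varphi_{\theta,\zeta}(n,\ell,u)=\theta^\ell\zeta^u/(\theta+\zeta)_{n-1}$ is harmonic, since the recursion reduces to $(\theta+\zeta+n-1)=\theta+\zeta+(n-1)$. To handle the boundary cases $\theta=0$ or $\zeta=0$, we reparametrise by $p=\theta/(\theta+\zeta)$ and $c=\theta+\zeta$, and take limits.

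The third step, the main obstacle, is to show there are no other extremes. We follow Vershik--Kerov ergodic approximation. For fixed $m$ and $n\to\infty$, we compute the Martin kernel
\[
\frac{\#\{\text{paths }(m,\ell',u')\to(n,\ell,u)\}\;d(m,\ell',u')}{d(n,\ell,u)}.
\]
We then show that along any sequence with $\ell_n/\log n\to\theta$ and $u_n/\log n\to\zeta$, it converges to $\varphi_{\theta,\zeta}(m,\ell',u')\,d(m,\ell',u')$. The tool is the generating function
\[
\sum_{\ell,u} d(n,\ell,u)\,x^\ell y^u=x y\,(x+y)(x+y+1)\cdots(x+y+n-2)\cdot(\text{normalisation}),
\]
read off from the insertion multiplicities. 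This reduces the path counts to Stirling-type coefficients of $(x+y)_{n-1}$ split binomially between $x$ and $y$. The asymptotics of unsigned Stirling numbers in the regime $\ell+u\sim c\log n$ (Hwang's / Moser--Wyman saddle point), combined with the binomial split concentrating at $\ell/(\ell+u)\to p$, gives the limit kernel. The delicate parts are uniformity in this regime and the edge regimes $\ell+u=o(\log n)$ and $\ell+u\gg\log n$. These produce the degenerate points $c=0$ and $c=\infty$, and in the latter, $p\in\{0,1\}$ or a mixture; such points must be checked to be either excluded by coherence or included as the stated degenerate cases.

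The final step is to conclude. Every extreme law is a limit of such kernels, hence equals some $\PP^{(\theta,\zeta)}$, and the mixing measure is unique. As an alternative route for this last part, Theorem~\ref{thm:two-sided-gem} gives an a.s. limit $(\rho_k)$ of the record values under each $\PP^{(\theta,\zeta)}$. From this limit, $\theta$ and $\zeta$ are recovered as the limits of $\ell_n/\log n$ and $u_n/\log n$. This shows the $\PP^{(\theta,\zeta)}$ are mutually singular and ergodic, which yields both extremality and uniqueness of the mixture.
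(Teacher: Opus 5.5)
The paper gives no proof of this statement; it only cites \cite[Prop.~6]{GnedinRecords2007}, so your argument has to stand on its own. The frame is right: branching graph with multiplicities $1,1,n-1$, harmonic $\varphi$, Choquet simplex, ergodic method. For $0<\theta+\zeta<\infty$ your kernel limit is also correct.

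That step becomes much cheaper once the generating function is fixed. There is no $xy$ prefactor: $\sum_{\ell,u} d(n,\ell,u)x^\ell y^u=(x+y)(x+y+1)\cdots(x+y+n-2)$. Hence $d(n,\ell,u)=\binom{\ell+u}{\ell}|s(n-1,\ell+u)|$, with $|s|$ the unsigned Stirling numbers of the first kind. Writing $r=\ell+u$, the Martin kernel then factorises exactly as (Stirling-triangle kernel in $r$) $\times$ (hypergeometric kernel $\binom{r'}{\ell'}\binom{r-r'}{\ell-\ell'}/\binom{r}{\ell}$). The known boundary of the Stirling triangle (convergence iff $r_n/\log n\to c\in[0,\infty]$), together with the hypergeometric-to-binomial limit $\ell_n/r_n\to p$, replaces all the saddle-point work. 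It also shows that the right coordinates are $(c,p)$: your coordinates $(\lim\ell_n/\log n,\lim u_n/\log n)$ cannot see $p$ when $c=0$.

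The genuine gap is the step you postpone, and your guess about its outcome is wrong. For $c=\infty$ the kernels converge, for \emph{every} $p\in[0,1]$, to the law $Q^p$ under which each $X_j$, $j\ge2$, is a record, lower with probability $p$ independently. Equivalently, $\PP(\pi_n=\sigma)=p^\ell(1-p)^u$ on rankings with $\ell+u=n-1$.
\begin{itemize}
\item $Q^p$ is coherent and $(\ell,u)$-dependent.
\item $Q^p$ is extreme: any central measure dominated by it lives on the all-record subgraph, where $(\ell,u)$-dependence is just exchangeability of the lower/upper labels, and de Finetti applies.
\item $Q^p$ is not a mixture of $\PP^{(\theta,\zeta)}$ with $\theta+\zeta<\infty$, the cases $\theta=0$ or $\zeta=0$ included. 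Each of those makes $X_3$ interior with probability $1/(\theta+\zeta+1)>0$, while $Q^p$ gives $0$.
\end{itemize}
So these points can be neither ``excluded by coherence'' nor ``included as the stated degenerate cases'', and your concluding step fails.

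The description ``$p\in\{0,1\}$ or a mixture'' fits $c=0$ instead. There the kernel limits are ``$X_2$ lower with probability $p$, no later records''. These are extreme only for $p\in\{0,1\}$, which are the limits of $\PP^{(\theta,0)}$ as $\theta\to0$ and of $\PP^{(0,\zeta)}$ as $\zeta\to0$.

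Carried through correctly, your method yields as extremes:
\begin{itemize}
\item the $\PP^{(\theta,\zeta)}$ with $0<\theta+\zeta<\infty$;
\item the two corner laws at $c=0$;
\item the segment $\{Q^p:p\in[0,1]\}$ at $\theta+\zeta=\infty$.
\end{itemize}
The canonicity statement therefore holds only with the parameter set closed up by this segment at infinity, which its parenthesis omits. Your mutual-singularity argument must then also separate the two corner laws by the type of $X_2$, and the $Q^p$ from each other by $\lim\ell_n/n$.
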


This is the sense in which \((\theta,\zeta)\) is a genuine, distinguished two-parameter family, in the same spirit as the matched-rate uniqueness of Section~\ref{sec:matched}. It should be stressed that \((\theta,\zeta)\) is \emph{not} the EPR pair \((\alpha,\theta)\): it governs the two-sided \emph{count} of extremes, not a discount reshaping block sizes. 

\begin{remark}[Record-dependent measures]
Theorem~\ref{thm:canonicity} conditions only on the pair of counts \((\ell,u)\). A finer sufficiency statistic -- the actual set of upper-record positions rather than just their number -- is analysed in \cite{GnedinBiasRecord}, where the extreme record-dependent measures on \(\mathfrak S_n\) are classified for every \(n\) coherently; this is the natural common refinement of Theorem~\ref{thm:canonicity} above and of the central-measure classification for the factorial (insertion) graph taken up in Section~\ref{sec:vershik-lodkin}.
\end{remark}

\section{\(r\)-records and constrained partitions} 

\label{sec:constrained}

The choice operator of Section~\ref{sec:one-benchmark} retains the \emph{minimum} of each slot.  Following \cite{GnedinConstrained2006} we show what happens when the choice operator instead waits for a prescribed number of points before committing to a benchmark -- the \(r\)-record idea.

\begin{definition}[Constrained hierarchy]
Fix positive integers \(\rho=(\rho_1,\rho_2,\dots)\). A benchmark hierarchy \(H_0=1>H_1>H_2>\cdots\) on \([0,1]\) is \(\rho\)-constrained if \(H_k\) is obtained from \(H_{k-1}\) by screening future points independently and uniformly on \([0,H_{k-1})\) until \(\rho_k\) of them have arrived, and setting \(H_k\) to the value of the founding point closest to \(H_{k-1}\) among these \(\rho_k\).
\end{definition}

In strength language: rather than taking the single minimum of a slot as its benchmark (\(\rho\equiv1\), Section~\ref{sec:one-benchmark}), the choice operator now waits for \(\rho_k\) points to fall below the previous threshold and reports the largest of them as the new benchmark -- an \(r\)-record.

\begin{proposition}[Stick-breaking with varying parameters, {\cite[eqs.~(6)--(8)]{GnedinConstrained2006}}]
\label{prop:constrained-stick}
If the residual fractions \(W_k=H_k/H_{k-1}\) are independent with \(W_k\stackrel{\rm d}{=}{\rm B}(a_k,b_k)\), the induced composition of a future sample has an explicit product-form law built from Polya--Eggenberger decrement matrices determined by \((a_k,b_k)_{k\ge1}\), generalising the constant-parameter law of Section~\ref{sec:ewens}, which is the case \(\rho\equiv1\), \(a_k\equiv\theta\), \(b_k\equiv1\).
\end{proposition}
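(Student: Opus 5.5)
Since the residual fractions are independent, I would write the law of a sample of size \(n\) as a mixture of multinomials, compute it by conditioning on one level of the hierarchy at a time, and let the Beta moments assemble into decrement-matrix factors. Put \(H_k=W_1\cdots W_k\). A future point (uniform on \([0,1]\)) lands in cell \(k\), namely \([H_k,H_{k-1})\), with probability \(P_k=(1-W_k)\prod_{j<k}W_j\). This is the same stick-breaking structure as in Proposition~\ref{prop:gem} and in the proof of Theorem~\ref{thm:general}. Given \((W_k)\), the occupancy vector \((A_1,A_2,\dots)\) of a future sample of size \(n\) is multinomial with parameters \(P_k\).

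The key step is to expand \(\prod_k P_k^{a_k}=\prod_k(1-W_k)^{a_k}W_k^{a_{k+1}+a_{k+2}+\cdots}\). The exponent of \(W_k\) is the tail count \(A_{k+1}:=a_{k+1}+a_{k+2}+\cdots\), which is the comparison count of Section~\ref{sec:algebra}. Independence of the \(W_k\) lets the expectation factor. The Beta moment \(\EE[(1-W)^pW^q]=\mathrm B(a+q,b+p)/\mathrm B(a,b)\) then gives
\[
\PP[A=a]=\binom{n}{a_1,a_2,\dots}\prod_{k\ge1}\frac{\mathrm B(a_k'+A_{k+1},\,b_k'+a_k)}{\mathrm B(a_k',b_k')},
\]
where \((a_k',b_k')\) denote the Beta parameters of \(W_k\), written with primes to avoid a clash with the occupancies \(a_k\). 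The product is effectively finite: once the tail count vanishes, the factors with \(A_{k+1}=0=a_k\) equal \(1\).

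To reach the decrement-matrix form, I read the factor for level \(k\) as the probability that, of the \(A_k\) points still below \(H_{k-1}\), exactly \(A_{k+1}\) survive the next screening. That probability is the P\'olya--Eggenberger (beta-binomial) transition \(A_k\to A_{k+1}\) with parameters \((a_k',b_k')\), so the law is the product along the chain \(n=A_1\ge A_2\ge\cdots\) of the matrix entries \(\binom{A_k}{A_{k+1}}\mathrm B(a_k'+A_{k+1},b_k'+A_k-A_{k+1})/\mathrm B(a_k',b_k')\). The multinomial coefficient is absorbed by the chain of binomials, exactly as in the Dimension formula of Definition~\ref{def:dimension}.

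As a check, the specialization \(a_k'\equiv\theta\), \(b_k'\equiv1\) gives the \(\GEM(\theta)\) case of Proposition~\ref{prop:gem}. There the factor \(\theta\,\mathrm B(\theta+A_{k+1},a_k+1)\) matches Remark~\ref{rem:esf-check}, and symmetrising over cell order returns \eqref{eq:esf}.

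The main obstacle is the link between the \(\rho\)-constrained screening and the Beta hypothesis. The proposition assumes independent Beta fractions rather than deriving them. The definition makes \(W_k\) the maximum of \(\rho_k\) independent uniforms on \([0,1)\), rescaled by \(H_{k-1}\), which gives \(W_k\sim\mathrm B(\rho_k,1)\). I would verify this, and verify independence across \(k\), by memorylessness of uniform screening under rescaling. That shows the constrained hierarchy is an instance of the proposition, with \(a_k'=\rho_k\) and \(b_k'=1\).
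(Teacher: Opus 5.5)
Your argument is correct. The paper gives no proof of its own here, only the pointer to \cite[eqs.~(6)--(8)]{GnedinConstrained2006}, so the useful comparison is with the proof of Theorem~\ref{thm:general}. You run exactly that computation: conditional multinomial, expansion of $\prod_kP_k^{a_k}$ into $\prod_k(1-W_k)^{a_k}W_k^{A_{k+1}}$, factorisation by independence, and Beta moments. The difference is that the Beta parameters are attached to positions rather than to benchmark identities, so no sum over discovery orders appears and a single product survives. Your extra step, telescoping the multinomial coefficient into $\prod_k\binom{A_k}{A_{k+1}}$ so that each factor is the beta-binomial (P\'olya--Eggenberger) transition $A_k\to A_{k+1}$, is what turns the statement's informal phrase ``built from decrement matrices'' into a precise formula.

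A few small points to tidy up.
\begin{enumerate}
\item The conditional-multinomial step needs $\sum_kP_k=1$, i.e.\ $H_k\to0$ a.s. For independent Beta fractions this holds if and only if $\sum_kb_k'/(a_k'+b_k')=\infty$. Otherwise the points below $\lim_kH_k$ need a cell of their own.
\item Your product is the law of the composition with empty cells kept and indexed by level, which matches the paper's convention that parts may be $0$. If empty cells are deleted, as for the decrement matrices of Section~\ref{sec:power-product}, level-dependent parameters in general give a sum over the positions of the silent levels rather than one product. For constant parameters this sum does collapse to a homogeneous decrement matrix conditioned on non-emptiness.
\item For the same reason, your check against \eqref{eq:esf} requires removing the empty cells before symmetrising; permuting cells alone is not enough.
\item Your last paragraph is not an obstacle. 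The Beta law is a hypothesis of the proposition, so that verification can be dropped. Moreover, screening only produces $a_k'=\rho_k$, $b_k'=1$, so $\rho\equiv1$ gives ${\rm B}(1,1)$, i.e.\ $\theta=1$. The general-$\theta$ case named in the statement therefore comes from the Beta hypothesis, not from the screening mechanism.
\end{enumerate}
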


\begin{theorem}[Ewens--Pitman--Yor embedding, {\cite[\S6]{GnedinConstrained2006}}]
\label{thm:ewens-pitman}
Take \(\rho\equiv1\) and let the benchmark weights grow linearly in \(k\): \(a_k=\theta+k\alpha\), \(b_k=1-\alpha\), for \(\alpha\in[0,1)\), \(\theta>-\alpha\). The resulting composition is the two-parameter Ewens--Pitman--Yor partition with discount \(\alpha\) and concentration \(\theta\) -- the same family whose power-law tail makes it, among other things, the standard prior for word-frequency distributions in hierarchical Bayesian language modelling \cite{teh2006hierarchical}.
\end{theorem}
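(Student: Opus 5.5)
The plan is to reduce the statement to Pitman's stick-breaking characterisation of the two-parameter family. The key fact is that the cell frequencies induced by the hierarchy form a $\GEM(\alpha,\theta)$ sequence, and the exchangeable partition "same cell" of a future sample is then the Ewens--Pitman--Yor partition. By the hypothesis of Proposition~\ref{prop:constrained-stick}, the residual fractions $W_k=H_k/H_{k-1}$ are independent. The conditional probability that a future uniform point falls into the $k$-th cell $(H_k,H_{k-1})$ is
\[
P_k=(1-W_k)\prod_{j<k}W_j,
\]
which is the same telescoping as in Proposition~\ref{prop:gem}. Put $\widetilde V_k:=1-W_k$. Since $W_k\stackrel{\rm d}{=}{\rm B}(\theta+k\alpha,1-\alpha)$, we get $\widetilde V_k\stackrel{\rm d}{=}{\rm B}(1-\alpha,\theta+k\alpha)$, independently for $k=1,2,\dots$. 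This is exactly Pitman's residual allocation model for $\GEM(\alpha,\theta)$. At $\alpha=0$ it reduces to ${\rm B}(1,\theta)$ sticks, i.e.\ to Proposition~\ref{prop:gem} with $\theta_j\equiv\theta$, which serves as a consistency check against Theorem~\ref{thm:esf}.

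\textbf{Step 1 (reduction to the frequencies).} Conditionally on the hierarchy, the sample is i.i.d.\ with cell probabilities $(P_k)$. This is a paintbox, so the induced partition of $[n]$ is exchangeable, and its EPPF depends only on the law of $(P_k)$. The order of the cells is irrelevant for the unordered partition.

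\textbf{Step 2 (the EPPF).} I would compute the EPPF directly rather than cite Pitman. Label the blocks in order of first appearance in the sample, with sizes $n_1,\dots,n_b$, so that the $i$-th block occupies some cell $k_i$, with the $k_i$ distinct. The probability of a given labelled partition is
\[
\sum_{k_1,\dots,k_b\ \text{distinct}}\EE\prod_i P_{k_i}^{n_i}.
\]
I would rewrite this through the size-biased permutation of $(P_k)$. Concretely, I would use the invariance of $\GEM(\alpha,\theta)$ under size-biased permutation, proved by checking that the first size-biased pick from $(P_k)$ has law ${\rm B}(1-\alpha,\theta+\alpha)$ and that the renormalised remainder is again of the same type with $\theta\mapsto\theta+\alpha$. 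This gives the probability as
\[
\EE\Bigl[\prod_{i=1}^b \widetilde P_i^{\,n_i-1}\prod_{i=1}^{b-1}\Bigl(1-\sum_{j\le i}\widetilde P_j\Bigr)\Bigr],
\]
with $(\widetilde P_i)$ again stick-broken from ${\rm B}(1-\alpha,\theta+i\alpha)$ variables. Expanding in the independent sticks and using Beta moments, as in the proof of Theorem~\ref{thm:general}, the product telescopes to
\[
\frac{\prod_{i=1}^{b-1}(\theta+i\alpha)}{(\theta+1)_{n-1}}\prod_{i=1}^b(1-\alpha)_{n_i-1}.
\]
This is the Ewens--Pitman--Yor EPPF.

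\textbf{Main obstacle.} The hard step is the size-biased invariance in Step 2, i.e.\ the passage from the fixed cell order of the hierarchy to the order of first appearance. For $\alpha=0$ it is classical. For general $\alpha$ I would first try the one-step computation: the density of the size-biased pick equals $\sum_k\EE[P_k;\,P_k\in dp]$, and I would evaluate this sum in closed form via the Beta densities of the $\widetilde V_k$. If that computation resists, I would fall back on citing Pitman's theorem that $\GEM(\alpha,\theta)$ is invariant under size-biased permutation, and on \cite[\S6]{GnedinConstrained2006} for the identification. A remark is also needed on the stated case $\rho\equiv1$ with growing weights. Slot minima alone give ${\rm B}(\theta_k,1)$ factors, so $b_k=1-\alpha\neq1$ is not realised by plain minima. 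The proof should therefore rest on the hypothesis of Proposition~\ref{prop:constrained-stick}, that the $W_k$ are independent Betas, rather than on any particular choice operator.
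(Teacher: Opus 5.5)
The paper gives no argument of its own for this statement. It defers entirely to the cited \S6, and the surrounding text suggests a specialisation of the product-form composition law of Proposition~\ref{prop:constrained-stick}. Your proposal is therefore a genuinely different, essentially self-contained route, and it is correct. Independent factors $1-W_k\sim{\rm B}(1-\alpha,\theta+k\alpha)$ are exactly $\GEM(\alpha,\theta)$ in cell order. Your telescoping also checks out: the exponent of $1-\widetilde V_j$ is $N_{j+1}=n_{j+1}+\cdots+n_b$, the $j$-th factor is $(1-\alpha)_{n_j-1}(\theta+j\alpha)_{N_{j+1}}/(1+\theta+(j-1)\alpha)_{N_j-1}$, and consecutive Pochhammer ratios collapse to $\theta+j\alpha$.

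The one imported fact is size-biased invariance of $\GEM(\alpha,\theta)$, and you are right that it is the crux. The ``renormalised remainder'' half is not a routine check in cell order, and summing $\EE[P_k;P_k\in{\rm d}p]$ over $k$ is the hard way in. A cheaper elementary closure is the Chinese restaurant:
\begin{itemize}
\item table~1 of the $(\alpha,\theta)$ restaurant is a two-colour P\'olya urn started from weights $(1-\alpha,\theta+\alpha)$;
\item the customers not at table~1 form an independent $(\alpha,\theta+\alpha)$ restaurant;
\item hence its frequencies in order of appearance are $\GEM(\alpha,\theta)$.
\end{itemize}
The restaurant's EPPF can be read off from the seating probabilities. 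An exchangeable partition is the paintbox of its own frequencies, and a paintbox law depends only on the law of the frequency sequence. Together these make your Step~2 unnecessary.

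What your route buys over the bare citation is clarity about what is actually true. It proves the claim for the unordered partition, and that is the only tenable reading. For $\alpha>0$ the stick factors are not identically distributed, and one checks already at $n=3$ that the cell-ordered composition is not regenerative, since $p(1,2)/p(1,1,1)\neq p(2)/p(1,1)$. So it is not the regenerative $(\alpha,\theta)$ composition invoked in Section~\ref{sec:regenerative}.

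Your caveat about $\rho\equiv1$ is also correct. The $\rho$-constrained mechanism with $\rho\equiv1$ produces uniform factors, and slot minima produce ${\rm B}(\theta_k,1)$. So $b_k=1-\alpha$ enters only through the Beta hypothesis of Proposition~\ref{prop:constrained-stick}.
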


\begin{remark}
This identifies the missing ingredient for reaching the full Ewens--Pitman--Yor family: not insertion, and not a varying \(\rho\), but benchmarks of \emph{linearly growing} weight \(a_k=\theta+k\alpha\) along a single slot-ordered hierarchy, in place of the constant weight \(\theta\) of Section~\ref{sec:ewens}. Setting \(\alpha=0\) gives \(a_k\equiv\theta\), \(b_k\equiv1\), the i.i.d.\ \({\rm B}(\theta,1)\) construction of Proposition~\ref{prop:gem}: the Ewens sampling formula of Section~\ref{sec:ewens} is the \(\alpha=0\) member of this larger family. The second parameter \(\alpha\) is a rate of growth of benchmark weight along the hierarchy, unrelated to the \((\theta,\zeta)\) pair of Section~\ref{sec:record-model}, which instead doubles a constant-weight hierarchy on two sides rather than letting one side grow.
\end{remark}

\begin{theorem}[Number of blocks, {\cite[Prop.~8]{GnedinConstrained2006}}]
Under the i.i.d.\ stick-breaking \(H_k=W_1\cdots W_k\) with \(\mu=\EE[-\log W_1]\), \(\sigma^2=\operatorname{Var}[-\log W_1]\) finite and \(\log(\sum_{j\le k}\rho_j)=o(k)\), the number of occupied cells \(K_n\) among a future sample of size \(n\) satisfies \(K_n\sim\mu^{-1}\log n\) a.s., and \((K_n-\EE K_n)/\sqrt{\operatorname{Var}K_n}\) converges to a standard Gaussian, with \(\EE K_n\sim\mu^{-1}\log n\), \(\operatorname{Var}K_n\sim\sigma^2\mu^{-3}\log n\).
\end{theorem}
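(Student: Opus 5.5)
The plan is to treat $K_n$ as the number of occupied cells in a regenerative stick-breaking scheme: a renewal-theoretic Bernoulli sieve in logarithmic coordinates. The approach follows \cite{GnedinConstrained2006} and the Bernoulli-sieve limit theory \cite{GnedinIksanovMarynych2010}.

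\emph{Renewal coordinates.} Write $S_k=-\log H_k=\sum_{j\le k}(-\log W_j)$, a random walk with i.i.d.\ increments of mean $\mu$ and variance $\sigma^2$. Conditionally on the hierarchy, the future sample is an i.i.d.\ uniform sample, equivalently i.i.d.\ $\Exp(1)$ points $E_i=-\log U_i$. These are dropped into the cells $[S_{k-1},S_k)$. The constraint $\rho$ only affects how each $H_k$ is constructed from screened points. The growth condition $\log\sum_{j\le k}\rho_j=o(k)$ ensures that the founding points consumed up to level $k$ do not interfere with the future sample at the relevant scale, so the sieve may be treated as driven by $(S_k)$ alone.

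\emph{Reduction to a counting process.} The maximum of $n$ exponentials is $\log n+O_P(1)$. Hence all cells with $S_k\le \log n-c$ are occupied with high probability, and cells beyond $\log n+c$ are empty. Set $N(t)=\#\{k:S_k\le t\}$. Then I would show
\[
K_n=N(\log n)+O_P(1),
\]
with the error having bounded moments. For this I would use the standard decomposition of $K_n$ into the count of all cells up to the last occupied one minus the empty cells below it. The number of empty cells is controlled by a uniform-in-$n$ bound on expected empty cells among the deep ones, as in the Bernoulli-sieve analysis, which needs only finite $\mu$ and a nonlattice or lattice-adapted renewal argument.

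\emph{Limit theorems.} The renewal SLLN gives $N(t)/t\to\mu^{-1}$ a.s., hence $K_n\sim\mu^{-1}\log n$ a.s. For a.s.\ convergence one also needs $K_n$ monotone in $n$, which is immediate since adding points cannot empty a cell, together with sandwiching along $n$. The renewal CLT gives
\[
\frac{N(t)-t/\mu}{\sqrt{\sigma^2\mu^{-3}t}}\Rightarrow\mathcal N(0,1),
\]
and the $O_P(1)$ error is negligible against $\sqrt{\log n}$. The moment asymptotics $\EE K_n\sim\mu^{-1}\log n$ and $\operatorname{Var}K_n\sim\sigma^2\mu^{-3}\log n$ follow from the corresponding renewal moment asymptotics plus uniform integrability of the error. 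This then justifies centring by $\EE K_n$ and scaling by $\sqrt{\operatorname{Var}K_n}$.

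\emph{Main obstacle.} The hardest step is the uniform control of empty cells and of the variance of the error term, together with confirming that the $\rho$-dependent screening does not break independence. I would first attempt a Poissonisation in $n$ and a conditional-variance bound given $(S_k)$, then de-Poissonise using monotonicity of $K_n$.
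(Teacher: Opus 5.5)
The overall architecture matches the paper's sketch: pass to $S_k=-\log H_k$, compare $K_n$ with the renewal count $N(\log n)$, and apply the renewal SLLN and CLT. The gap is in the comparison step, which is false at the stated generality.

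From $\max_iE_i=\log n+O_P(1)$ you infer that every cell with $S_{k-1}\le\log n-c$ is occupied with high probability. This does not follow, because whether a deep cell is occupied depends on its \emph{length}, not on its position. A cell at depth $t=\log n-S_{k-1}$ receives on average $e^{t}(1-W_k)$ points. So it is empty with probability at least $\tfrac14\PP[-\log(1-W_1)\ge t]$. Summing over the roughly $\mu^{-1}$ cells per unit of depth gives
\[
\EE\bigl[N(\log n)-K_n\bigr]\;\gtrsim\;\frac{1}{4\mu}\int_0^{\log n}\PP\bigl[-\log(1-W_1)>y\bigr]\,{\rm d}y-O(1).
\]
This is bounded only when $\EE[-\log(1-W_1)]<\infty$, not under ``only finite $\mu$'' as you claim. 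The extra condition concerns the law of $-\log W_1$ near $0$, which $\mu,\sigma^2<\infty$ do not control. For example, take $\PP[-\log W_1\le x]=(\log(1/x))^{-1/4}$ for $0<x\le e^{-1}$. Then $-\log W_1$ is bounded, yet $N(\log n)-K_n$ has mean of order $(\log n)^{3/4}\gg\sqrt{\log n}$. In this example three of your steps fail:
\begin{itemize}
\item $K_n-N(\log n)$ is not $O_P(1)$.
\item Your step ``the error is negligible against $\sqrt{\log n}$'' fails. Your argument would give the CLT centred at $\mu^{-1}\log n$, which is false here.
\item The uniform-integrability route to $\operatorname{Var}K_n$ collapses.
\end{itemize}

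The theorem survives only because it centres at $\EE K_n$. So what you must prove is that the error has variance $o(\log n)$, not that it is bounded. One repair is to compare $K_n$ with the perturbed-walk count $N^*(t)=\#\{k\ge1:S_{k-1}+|\log(1-W_k)|\le t\}$, which counts cells of mass at least $1/n$. Then:
\begin{itemize}
\item $K_n-N^*(\log n)$ is bounded in $L^2$ with no extra moment assumption.
\item $N(t)-N^*(t)=\sum_k1\{S_{k-1}\le t<S_{k-1}+|\log(1-W_k)|\}$ is a renewal shot noise with response $\PP[|\log(1-W_1)|>y]\to0$. Its mean may grow, but its variance is $o(t)$ when $\sigma^2<\infty$.
\end{itemize}
Together these give $\operatorname{Var}K_n\sim\sigma^2\mu^{-3}\log n$ and the CLT. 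Alternatively, add the hypothesis $\EE|\log(1-W_1)|<\infty$, which holds automatically for Beta factors; then your decomposition works for the plain sieve.

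Separately, you never actually use the hypothesis $\log\sum_{j\le k}\rho_j=o(k)$. The claim that the founding points ``do not interfere at the relevant scale'' is asserted, not shown. The paper's sketch needs a separate large-deviation argument to close the gap in its squeeze, and that is where this growth condition has to enter quantitatively. Nothing in your plan plays that role.
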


\begin{proof}[Proof idea]
\(K_n\) is squeezed between the number of renewal epochs of the walk \(-\log W_k\) below \(\log n\) and a perturbation controlled by the future points falling near the current threshold; both are asymptotically Gaussian by the renewal CLT, and a large-deviation argument closes the gap. See \cite{GnedinConstrained2006} for the full argument.
\end{proof}

\section{Regenerative hierarchies and conjugate updating}
\label{sec:regenerative}

We now answer directly which benchmark weights admit simple, conjugate predictive updates.

\subsection{Fixed weights: only equality is conjugate}

\begin{proposition}
\label{prop:only-equal}
In the finite exponential race of Section~\ref{sec:race}, with a fixed multiset of weights \(w_1,\ldots,w_k\) discovered in a random order, the residual weight \(W_2(\sigma)=\sum_iw_i-w_{i_1}\) is independent of \(\sigma\) -- and hence, by the same argument at every level, the general composition law~\eqref{eq:general} collapses from a sum of \(k!\) terms to a single term -- if and only if \(w_1=\cdots=w_k\).
\end{proposition}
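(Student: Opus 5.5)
The proof has two directions. The first statement is elementary: \(W_2(\sigma)=s-w_{i_1}\), with \(s=\sum_i w_i\), is a function of \(\sigma\) only through the first discovered index \(i_1\). By Proposition~\ref{prop:size-biased}, \(\PP[i_1=i]=w_i/s>0\) for every \(i\). So every index occurs as \(i_1\) with positive probability.

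For the ``only if'' direction, suppose \(W_2(\sigma)\) does not depend on \(\sigma\). We read this as being almost surely constant, that is, constant over all orders of positive probability, which here means all orders. Taking \(\sigma\) with \(i_1=i\) and \(\sigma'\) with \(i_1=j\) gives \(s-w_i=s-w_j\), hence \(w_i=w_j\) for all \(i,j\). If one instead reads the statement as ``\(W_2(\sigma)\) independent of \(\sigma\) as random variables'', the same conclusion follows. A random variable that is a deterministic function of \(\sigma\) and independent of it must be a.s.\ constant, and the previous argument then applies.

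For the ``if'' direction, let \(w_1=\cdots=w_k=w\). Then \(W_r(\sigma)=(k-r+1)w\) for every \(r\) and every \(\sigma\). This is the ``same argument at every level'': at stage \(r\) the residual weight is \(s\) minus the sum of the first \(r-1\) discovered weights, which is \((k-r+1)w\) no matter which benchmarks were discovered. In the proof of Theorem~\ref{thm:general}, the conditional law \(\PP[A=a\mid\sigma]\) then equals
\[
\binom{n}{a_0,\dots,a_k}\prod_{j=1}^k (k-j+1)w\,{\rm B}\bigl((k-j+1)w+a_j+\cdots+a_k,\,a_{j-1}+1\bigr).
\]
This expression is identical for all \(\sigma\). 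Since \(\PP[\sigma]=1/k!\), the sum over \(\mathfrak S_k\) collapses to this single term. The same collapse can be seen directly in \eqref{eq:general}, read with its first Beta argument as the residual weight \(W_r(\sigma)\) as in the proof: its \(k!\) summands coincide and combine to one term with \(\prod_i w_i\cdot k!=\prod_{j}(k-j+1)w\). This is the single-chamber form of Lemma~\ref{lem:DI} and Remark~\ref{rem:DI-general}.

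Conversely, if the collapse to a single term holds in the sense that the conditional laws \(\PP[A=a\mid\sigma]\) do not depend on \(\sigma\), then in particular the \(r=1\) Beta factor does not depend on \(\sigma\). That factor encodes \(W_2(\sigma)\) through the law of \(V_2\), so we are back in the first case.

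The main obstacle is interpretive rather than computational. We have to make precise what ``collapses to a single term'' means, because the unconditional composition law is always some function of \(a\). The natural formalisation is that the conditional law given \(\sigma\), equivalently the residual-weight sequence \((W_r(\sigma))_r\), is the same for all \(\sigma\). With that reading, both directions reduce to the first-level computation above. If one insisted instead on distinguishing distinct weights from the unconditional law alone, one would compare factorial moments, for instance \(n=1\), \(k=2\). We would not pursue this, since the statement ties the collapse to independence of \(W_2(\sigma)\) from \(\sigma\).
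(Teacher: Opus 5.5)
Your proof is correct and is essentially the paper's argument: independence of \(W_2(\sigma)\) from \(\sigma\) forces \(w_{i_1}=s-W_2(\sigma)\), with \(s=\sum_i w_i\), to be the same whichever benchmark is discovered first, and equal weights give \(W_r(\sigma)=(k-r+1)w\) for all \(r\) and \(\sigma\), so the \(k!\) summands coincide (your remark that each index is first with positive probability \(w_i/s\) is care the paper leaves implicit). One small slip, in your optional converse paragraph only: the factor carrying \(W_2(\sigma)\) is the \(j=2\) factor, since the \(j=1\) factor involves only \(W_1(\sigma)=s\); moreover, deducing that a single factor is \(\sigma\)-free from a \(\sigma\)-free conditional law of \(A\) would need an identifiability argument --- but neither the statement nor the paper's proof requires that paragraph.
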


\begin{proof}
If \(W_2(\sigma)\) does not depend on \(\sigma\), then \(w_{i_1}=\bigl(\sum_iw_i\bigr)-W_2(\sigma)\) does not depend on which benchmark is discovered first, i.e.\ \(w_1=\cdots=w_k\). Conversely, with all weights equal, \(W_r(\sigma)=(k-r+1)w\) for every \(\sigma\), as already noted after Theorem~\ref{thm:general}.
\end{proof}

So within a fixed, finite collection of identity-bearing benchmarks, matched rates (Proposition~\ref{prop:matched}) are not a convenient special case: they are the \emph{only} case admitting a one-term, conjugate composition law. Richer conjugate families must come from changing the architecture of the hierarchy, not from retuning weights within a fixed finite race.

\subsection{Positional hierarchies: regenerative composition structures}

In Sections~\ref{sec:ewens} and~\ref{sec:constrained} the weight of the \(r\)-th benchmark is assigned to \emph{position} \(r\) directly (constant \(\theta\), or linearly growing \(\theta+r\alpha\)), so there is no discovery order to sum over. In \cite{GnedinPitman2005} exactly the composition laws with this property are classified.

\begin{definition}[Regenerative hierarchy]
\label{def:regenerative}
A benchmark hierarchy is regenerative if, conditionally on the size \(m\) of the first cell, the remaining hierarchy and future points form an independent copy of the same model on a population of size \(n-m\).
\end{definition}

\begin{theorem}[{\cite[Prop.~3.1, Thm.~5.2]{GnedinPitman2005}}]
\label{thm:regenerative}
A composition law is regenerative if and only if it has the product form
\[
p(n_1,\ldots,n_k)=\prod_{j=1}^k q(N_j:n_j),\qquad N_j=n_j+\cdots+n_k,\qquad q(n:m):=\frac{\Phi(n:m)}{\Phi(n)},
\]
for a nonnegative array \(\Phi(n:m)\) arising from the Laplace exponent \(\Phi\) of a subordinator; equivalently, if and only if the benchmarks arise as a standard exponential sample separated by the closed range of a subordinator with that Laplace exponent. (Here \(\Phi\) is unrelated to the comparison-count coordinates \(W\) of Section~\ref{sec:algebra}.)
\end{theorem}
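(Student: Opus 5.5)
The proof has two directions. For the easy one we compute the composition law from a subordinator and check the product form. For the hard one we start from regeneration, read off the product form, and then recover a subordinator from the array $q$.

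\emph{Step 1: subordinator $\Rightarrow$ product form.} Let $S=(S_t)$ be a subordinator with Laplace exponent $\Phi(\lambda)=\int_0^\infty(1-e^{-\lambda x})\,\nu({\rm d}x)$, possibly with drift and killing. Let $\mathcal R$ be the closed range of $S$, and drop $n$ i.i.d.\ $\Exp(1)$ points on $\RR_+$. The cells are the gaps of $\mathcal R$. Points falling in $\mathcal R$ itself, which has positive measure when there is drift, are singletons (dust in the sense of Section~\ref{sec:dust}). To find the first block, run $S$ until the first jump that straddles at least one sample point. By the strong Markov property at that passage time and memorylessness of the exponentials, everything beyond the straddling jump is a fresh copy of the same model on the surviving $n-m$ points. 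This gives regeneration in the sense of Definition~\ref{def:regenerative}, hence the product form.

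The probability that the first block has size $m$ comes from the compensation formula. It is a ratio whose numerator collects the events ``a jump from level $y$ of length $x$ covers exactly $m$ of the $n$ points, all others lying beyond it'':
\[
\Phi(n:m)=\binom nm\int_0^\infty (1-e^{-x})^m e^{-(n-m)x}\,\nu({\rm d}x)\;+\;n\,{\rm d}\,\mathbf 1_{m=1}.
\]
The drift term accounts for dust. The denominator is the total rate of blocks, $\sum_m\Phi(n:m)=\Phi(n)$, which is immediate from the binomial expansion of $1-e^{-nx}=1-\bigl((1-e^{-x})+e^{-x}\bigr)^n$. This is the same exponential-race computation as in Theorem~\ref{thm:general}, with a single Beta-type integral against $\nu$ in place of a finite sum.

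\emph{Step 2: regenerative $\Rightarrow$ product form.} If the law is regenerative, conditioning on $n_1=m$ and iterating Definition~\ref{def:regenerative} gives $p(n_1,\dots,n_k)=\prod_j q(N_j:n_j)$ directly. Here $q(n:\cdot)$ is the law of the first block size at sample size $n$. This is purely formal.

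\emph{Step 3: from $q$ to a subordinator (main obstacle).} We must show that the decrement matrix $q$, constrained by sampling consistency, is necessarily of the form $\Phi(n:m)/\Phi(n)$. Consistency means that deleting a uniformly chosen ball from a sample of size $n+1$ yields the law at size $n$. First I would write this condition as a linear recursion in $q$. It relates $q(n+1:\cdot)$ to $q(n:\cdot)$ through the factor $q(n+1:1)$ and the event that a block of size $1$ disappears. Next I would solve for the unnormalised rates $\Phi(n:m)$, and show that the differences in $m$ are completely monotone in $n$, a Hausdorff moment sequence. I would then invoke Hausdorff's moment theorem, in the spirit of de Finetti and of the $[0,1]$-boundary discussion of Section~\ref{sec:prelim}, to produce a measure on $[0,1]$, namely $\nu$ pushed forward by $x\mapsto1-e^{-x}$, together with drift and killing from the boundary atoms.

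The key identity to extract is that $n\mapsto\Phi(n)$ has completely monotone differences, which characterises Bernstein functions, i.e.\ Laplace exponents. I expect the bookkeeping of the deletion recursion to be the delicate part: in particular, handling the normalisation $\Phi(1)$ and identifying the $m=1$ atom with the drift. An alternative route is to pass to the infinite-sample limit, where the ordered paintbox of \cite{GnedinRepresentationofCompositionStructures} gives a random closed set. Regeneration then makes this set regenerative in the sense of Maisonneuve, and hence the range of a subordinator. Finally, Step 1 shows the two descriptions coincide.
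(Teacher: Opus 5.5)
The paper gives no argument for this statement; it imports it from \cite{GnedinPitman2005}, so there is no internal proof to compare against. Judged on its own, your plan is correct, and the first route you sketch for Step~3 closes into a short, self-contained proof once the normalisation you flagged is fixed.

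Two small points on Step~1. With drift, the first block can be a dust singleton, a sample point hit by the range before any straddling jump occurs. Your formula contains the corresponding term $n{\rm d}\,\mathbf 1_{m=1}$, but then $\Phi(\lambda)$ must include ${\rm d}\lambda$ for $\sum_m\Phi(n:m)=\Phi(n)$ to hold.

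For Step~3, the deletion recursion is
\[
q(n:m)\Bigl(1-\tfrac{q(n+1:1)}{n+1}\Bigr)=\tfrac{m+1}{n+1}\,q(n+1:m+1)+\tfrac{n+1-m}{n+1}\,q(n+1:m),\qquad 1\le m\le n .
\]
This is not linear in $q$. It becomes linear exactly under the choice
\[
\Phi(0)=0,\quad \Phi(1)=1,\quad \Phi(n+1)=\frac{\Phi(n)}{1-q(n+1:1)/(n+1)},\quad \Phi(n:m)=\Phi(n)\,q(n:m).
\]
Then $\phi(n,m)=\Phi(n:m)/\binom nm$ satisfies the Pascal relation $\phi(n,m)=\phi(n+1,m)+\phi(n+1,m+1)$ for $1\le m\le n$. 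Set $c_n:=\Phi(n+1)-\Phi(n)$; the normalisation gives $c_n=\phi(n+1,1)$, and iterating the Pascal relation gives $(-1)^j\Delta^jc_n=\phi(n+j+1,j+1)\ge0$. This is the complete monotonicity you were after. Hausdorff's theorem then yields a probability measure $\mu$ on $[0,1]$ with
\[
\Phi(n:m)=\binom nm\int_{[0,1]}x^{m-1}(1-x)^{n-m}\,\mu({\rm d}x).
\]
Writing $\mu={\rm d}\,\delta_0+x\,\tilde\nu({\rm d}x)$ on $(0,1]$ settles both points you flagged:
\begin{itemize}
\item the atom at $0$ contributes only at $m=1$ and is the drift;
\item an atom of $\tilde\nu$ at $1$ is the killing;
\item the finite Hausdorff measure is $x\tilde\nu$, not $\tilde\nu$, which may be infinite. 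Pulling $\tilde\nu$ back along $y\mapsto1-e^{-y}$ gives a L\'evy measure with $\int(1\wedge y)\,\nu({\rm d}y)<\infty$.
\end{itemize}
Uniqueness of $\Phi$ up to the scale $\Phi(1)$ comes for free, and your Step~1 then identifies the structure with the subordinator.

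Your second route, the ordered paintbox plus Maisonneuve's characterisation, is the probabilistic one. It matches the regenerative-set picture in which the cited result is phrased and produces the random closed set directly. However, finite-$n$ regeneration only shows that the paintbox regenerates at one kind of random time: the first point of the set at or beyond the smallest of $n$ independent exponential sample points. Upgrading regeneration at these special times to regeneration in Maisonneuve's sense is a genuine lemma, and your sketch does not supply it. The Hausdorff route avoids this entirely. It needs only the deletion recursion, Hausdorff's theorem on the Pascal graph, and Step~1, so it is the one to write out.
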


\begin{remark}[Our two infinite hierarchies are the regenerative examples]
Section~\ref{sec:ewens}'s construction is exactly the \((0,\theta)\) case of \cite{GnedinPitman2005}, \S8.2: the ordered Ewens formula from \(\operatorname{beta}(1,\theta)\) stick-breaking. Section~\ref{sec:constrained}'s construction is exactly the general \((\alpha,\theta)\) case, \S8, with \(\Phi(s)=s{\rm B}(1-\alpha,s+\theta)\). By \cite[Thm.~8.1]{GnedinPitman2005}, \((\alpha,\theta)\) is -- with only degenerate exceptions -- the \emph{unique} two-parameter regenerative family. Our two infinite hierarchies are therefore the one- and two-parameter extremes of the entire class of benchmark hierarchies admitting conjugate updating, not ad hoc choices.
\end{remark}

\begin{remark}[Answer to the motivating question]
A collection of benchmark weights admits a nice conjugate form with easy updates precisely when either (a) it is a finite collection of \emph{equal} weights (Proposition~\ref{prop:matched}), or (b) it is an infinite, positionally indexed sequence arising as decrements of a subordinator's Laplace exponent -- of which the \((\alpha,\theta)\) family exhausts the two-parameter cases. The general finite race of Section~\ref{sec:race} has no richer conjugate sub-family hiding inside it: its \(k!\)-term sum is the exact combinatorial price of fixing benchmark \emph{identities} rather than benchmark \emph{positions}.
\end{remark}

\section{The nonlinear seating chain \(W(y)=c\,y^\gamma\): a genuine departure from the Beta world}
\label{sec:three-param}

The two-parameter families obtained so far remain inside the gamma--beta algebra. Racing raw \(\Exp(1)\) sample points directly against an inhomogeneous Poisson process with intensity \(cy^\gamma\) on the strength axis does \emph{not} admit a closed product formula for \(\gamma\neq0\): the discovery-sequence probability is a Poisson-gap integral whose integrand mixes a linear exponential (the sample) with a degree-\((\gamma+1)\) exponential (the benchmark mean measure), and these recombine into a single tractable exponent only at \(\gamma=0\). What survives is the seating rule itself, defined directly as a Markov chain rather than derived from an exchangeable partition.

\subsection{Model} Consider the composition-valued Markov chain (on strict compositions, no zero parts) \(\Lambda_n=(n_1,\dots,n_k)\), \(\sum_in_i=n\), with transition probabilities
\begin{eqnarray*}
\PP\bigl[(n_1,\dots,n_j,\dots,n_k)\to(n_1,\dots,n_j+1,\dots,n_k)\bigr]&=&\frac{n_j^{1/\gamma}}{S(\lambda)+c},\qquad  \\
\PP\bigl[(n_1,\dots,n_k)\to(n_1,\dots,n_k,1)\bigr]&=&\frac c{S(\lambda)+c},
\end{eqnarray*}
where
\begin{equation}
\label{eq:nlCRP}
S(\lambda)=\sum_in_i^{1/\gamma}
\end{equation}
replaces the total mass \(n\) that drives the ordinary CRP. At \(\gamma=1\) this is exactly the seating rule of Section~\ref{sec:ewens}.

\subsubsection{What it is not} This is not the CRP, not Pitman--Yor, and not a Gibbs partition \cite{gnedin2006gibbs}: a Gibbs partition's transition probabilities depend on the composition only through \((n,k)\) and the size of the block being joined, but here the normalising denominator \(S(\lambda)+c\) depends on the \emph{whole} shape of \(\lambda\) through the nonlinear sum \(\sum_in_i^{1/\gamma}\). Consequently there is no homogeneous decrement matrix \(q_n(m)=\PP[\text{first part has size }m\mid n]\) of the regenerative type: the chain is well-defined at every step -- it is an honest Markov chain on compositions, with no consistency issue, since nothing claims it is the restriction of a fixed infinite object -- but it is not Gibbs, not regenerative, and not (for \(\gamma\neq1\)) reducible to a size-\(n\)-only sampling formula.

\subsubsection{Self-similarity of the underlying intensity.} The Lévy measure of the pure power is \(\rho({\rm d}x)=c\,x^{-1-\gamma}\,{\rm d}x\). Restricting to \((0,x)\) and rescaling \(y=xz\) gives
\[
\rho_x({\rm d}y)=c\,x^{-\gamma}\,z^{-1-\gamma}\,{\rm d}z\qquad(z=y/x\in(0,1)),
\]
so the shape (the exponent \(-1-\gamma\)) is preserved and only the scale parameter flows, \(c\mapsto cx^{-\gamma}\): the construction is quasi-invariant under restriction and rescaling, with the multiplicative cocycle \(x^{-\gamma}\) taking the place of the identity that strict self-similarity would require \cite{GnedinOlshanskiQuasiInvariance2010}.

\subsubsection{The special case \(\gamma=1\).} Here \(S(\lambda)=\sum_in_i=n\) identically, so the chain reduces to \(\PP[\text{join }j]=n_j/(n+c)\), \(\PP[\text{new}]=c/(n+c)\) -- the ordinary CRP, yielding the Ewens partition structure of Section~\ref{sec:ewens}. The scale flow degenerates, \(c\mapsto c\), since \(x^{-\gamma}=x^{-1}\) no longer enters the relevant statistic: the cocycle is trivial, self-similarity holds outright rather than up to a correction, and \(\gamma=1\) is the unique value at which the chain is Gibbs.

\subsubsection{The case \(\gamma\neq1\)} Deleting the first block leaves a composition from the same family but with a transformed scale \(c'=c\,X^{-\gamma}\), \(X\) the random scaling factor associated with the first cut, so decrement probabilities \(q_n(m\mid c)\) are expected to satisfy an integral recursion of the schematic form
\[
q_n(m\mid c)=\int K_c({\rm d}x)\,q_{n-m}(\cdot\mid cx^{-\gamma})
\]
rather than a stationary decrement matrix -- a genuinely different, non-regenerative kind of consistency, not yet worked out in closed form.

\subsubsection{Interpretation.} The process is best viewed as: a Markov chain on compositions, well-defined at every \(n\) with no consistency question to resolve because none is claimed; a nonhomogeneous composition structure in the sense of \cite{PitmanTran2015}; a quasi-invariant deformation of the Ewens chain, self-similar up to the cocycle \(x^{-\gamma}\), with the cocycle trivial and the Gibbs property recovered only at \(\gamma=1\). The statistic \(S(\lambda)=\sum_in_i^{1/\gamma}\) of~\eqref{eq:nlCRP} replaces the total mass \(n\) that drives the ordinary Ewens case, and is the reason this family sits outside every closed-form class classified elsewhere in this paper.

\section{The \((\gamma,\alpha,\theta)\) hierarchical splitting}
\label{sec:power-product}

The tournament choice operator of Section~\ref{sec:insertion} shows that a multi-round comparison rule among past points can produce a residual weight with a density on \((0,1)\) that is not simply Beta. Tournament structures of this kind, varied over the number and shape of their rounds, generate a wide range of such densities; the pure-power seating chain of Section~\ref{sec:three-param} is one instance, and the following is another, chosen for the closed form it admits.

Let \(W_1,W_2,\dots\) be i.i.d.\ with density \({\rm B}(\gamma,\theta)\) on \((0,1)\), and build the stick-breaking sequence \(P_j=W_j\prod_{i<j}(1-W_i)\) as in Section~\ref{sec:ewens}. The decrement matrix
\[
q_{\gamma,\theta}(n:m)=\binom nm\,\frac{(\gamma)_m(\theta)_{n-m}}{(\gamma+\theta)_n-(\theta)_n}
\]
is a ratio of Pochhammer symbols, properly normalised at every \(n\) by construction, and reduces to the Ewens rising factorial at \(\gamma=1\). It is not the raw law of the count below \(W_1\) alone: a first break can land with none of the \(n\) points below it, and such an empty cell is not a part of the composition, which by convention lists only its nonzero blocks. The subtracted term \((\theta)_n\) is exactly this empty-cell mass -- the \(m=0\) term of the Vandermonde--Chu identity \(\sum_{m=0}^n\binom nm(\gamma)_m(\theta)_{n-m}=(\gamma+\theta)_n\) -- so \(q_{\gamma,\theta}(n:m)\) is the law of the first break's count conditioned on landing nonempty, matching the size of the first \emph{recorded} part however many silent empty breaks preceded it.

This combines with the \((\alpha,0)\) self-similar (\(\alpha\)-stable) mechanism by sliced splitting \cite{GnedinIksanovMarynych2010,gnedin2005markov}: split \(n\) first by the outer \((\gamma,\theta)\) stick-breaking, then fragment each resulting block independently by the inner \((\alpha,0)\) mechanism, decrement \(h(m)=\alpha(1-\alpha)_{m-1}/m!\) for \(m\) less than the block size, \(q(k:k)=(1-\alpha)_{k-1}/(k-1)!\) for the whole block. This defines a three-parameter regenerative composition structure, in the same spirit as the self-similar continuum-tree constructions of \cite{pitman2009regenerative}. At \(\alpha=0\) the inner step is the identity, since \(h(\cdot,0)\equiv0\) forces immediate termination once renormalised, recovering the pure \((\gamma,\theta)\) family; at \(\gamma=1\) the first-part law reduces to the two-parameter decrement matrix
\[
q_{\alpha,\theta}(n:m)=\binom nm\frac{(1-\alpha)_{m-1}}{(\theta+n-m)_m}\cdot\frac{(n-m)\alpha+m\theta}n
\]
of the ordinary \(\mathrm{EPY}(\alpha,\theta)\) family. The \((\gamma,\theta)\) family and the sliced-splitting mechanism are both due to \cite{GnedinIksanovMarynych2010}; placing \(\gamma\) at the outer level of the splitting, rather than a bare Poisson skeleton, gives the three-parameter object above. Related constructions, replacing independence in the stick-breaking factors by a Markov dependence while retaining the same marginal law, are studied by \cite{PitmanTran2015}.

\subsection{The tournament family of Lévy measures}

The general tournament of Section~\ref{sec:insertion} -- group \(m\) points, retain the minimum of each group, then retain the maximum of \(\gamma\) such minima -- generates a whole family of Lévy measures on \((0,1]\) once the tournament is fed into a benchmark hierarchy rather than a single comparison. Writing \(x\) for the residual weight and allowing an additional singularity \(\alpha\le0\) at the origin and an intensity tilter \(\theta>0\),
\begin{eqnarray}\nonumber
\tilde\nu_{m,\gamma,\alpha,\theta}({\rm d}x)
=\theta\,x^{\alpha-1}\,m\gamma\,(1-x)^{m-1}\bigl(1-(1-x)^m\bigr)^{\gamma-1}\,{\rm d}x,\\
\qquad 
m\in\NN,\ \gamma>0,\ \alpha\le0,\ \theta>0.
\end{eqnarray}
At \(m=2\), \(\alpha=1\), \(\theta=1\) this coincides with the Topp--Leone density of Section~\ref{sec:insertion} with \(\gamma=\nu\). The measure is finite at \(\alpha=0\) and infinite at \(\alpha<0\), the regime of heavy-tailed benchmarks in the sense of \cite{markovich2008}; the associated binomial moments \(\Phi(n:m')\), and hence the decrement matrix \(q(n:m')=\Phi(n:m')/\Phi(n)\), are always a finite sum of Beta functions, becoming purely rational exactly when \(m\) and \(\gamma\) are both positive integers.

\subsubsection{Minima and maxima exchanged} Reflecting \(x\mapsto1-x\) about the centre of \([0,1]\) exchanges the two rounds of the tournament: minimum-within-group followed by maximum-across-groups becomes maximum-within-group followed by minimum-across-groups. Writing \(y=1-x\) for the residual weight of this reversed tournament,
\[
\tilde\nu^*_{m,\gamma,\alpha,\theta}({\rm d}y)=\theta\,(1-y)^{\alpha-1}\,m\gamma\,y^{m-1}\bigl(1-y^m\bigr)^{\gamma-1}\,{\rm d}y,
\]
obtained from \(\tilde\nu_{m,\gamma,\alpha,\theta}\) by substituting \(x=1-y\) throughout, tilter included. The singularity moves with the reflection, from the origin to \(y=1\): benchmarks piling up under repeated minimisation accumulate near \(0\), while benchmarks piling up under repeated maximisation accumulate near \(1\), and the two constructions are mirror images of one another rather than independent families.

\subsubsection{The base case \(\gamma=2\)} For general group size \(m\), setting \(\gamma=2\) collapses the density to a two-term polynomial difference:
\[
\tilde\nu_{m,2,\alpha,\theta}({\rm d}x)=2m\theta\,x^{\alpha-1}\Bigl[(1-x)^{m-1}-(1-x)^{2m-1}\Bigr]\,{\rm d}x,
\]
obtained by expanding \(1-(1-x)^m\) inside the \(\gamma=2\) power. Combinatorially this is a depth-two tournament tree: a single maximum-node of arity \(2\) at the root, with two independent minimum-nodes of arity \(m\) as its children -- the smallest genuine tournament tree beyond a single round, in the sense already used for minimax game trees.

\subsubsection{Non-integer and non-positive \(\gamma\)} The density above is defined for every real \(\gamma>0\), integer or not, by the same formula; only the rationality of the decrement matrix requires \(\gamma\in\NN\). At \(\gamma=0\) the round of maxima disappears and the champion degenerates to a point mass at the origin. Continuing to \(\gamma<0\) no longer describes a tournament with a max-round of that many groups, but the resulting object appears closely related to the stick-breaking-with-atom structure of the \((\alpha,0)\) case in Section~\ref{sec:regenerative}, whose Lévy measure carries an atom \(\delta_1({\rm d}x)\) alongside its continuous part; we record this connection without pursuing it further here.

\subsubsection{Rational cases} For \(m,\gamma\in\NN\) the density is a polynomial after the singularity factors out, and every decrement probability is a ratio of two polynomials in \(n\) and \(m'\):
\begin{center}
\begin{tabular}{@{}cccl@{}}
\toprule
\(m\) & \(\gamma\) & \(\alpha\) & Lévy density (factor \(\theta\) omitted) \\
\midrule
2 & 1 & \(0\) & \(2(1-x)/x\) \\
2 & 1 & \(\alpha\le0\) & \(2\,x^{\alpha-2}(1-x)\) \\
2 & 2 & \(0\) & \(4(1-x)(2-x)\) \\
2 & 2 & \(\alpha\le0\) & \(4\,x^{\alpha-1}(1-x)(2-x)\) \\
2 & 3 & \(\alpha\le0\) & \(6\,x^{\alpha}(1-x)(2-x)^2\) \\
2 & \(\gamma\in\NN\) & \(\alpha\le0\) & \(2\gamma\,x^{\alpha+\gamma-2}(1-x)(2-x)^{\gamma-1}\) \\
3 & 1 & \(\alpha\le0\) & \(3\,x^{\alpha-2}(1-x)^2\) \\
3 & 2 & \(\alpha\le0\) & \(6\,x^{\alpha-1}(1-x)^2(x^2-3x+3)\) \\
3 & \(\gamma\in\NN\) & \(\alpha\le0\) & \(3\gamma\,x^{\alpha-1}(1-x)^2\bigl(1-(1-x)^3\bigr)^{\gamma-1}\) \\
4 & \(\gamma\in\NN\) & \(\alpha\le0\) & \(4\gamma\,x^{\alpha-1}(1-x)^3\bigl(1-(1-x)^4\bigr)^{\gamma-1}\) \\
\bottomrule
\end{tabular}
\end{center}
and every higher integer pair \((m,\gamma)\) continues the pattern.

\subsubsection{Non-rational cases} As soon as \(\gamma\) is a positive real rather than an integer, with \(m\) fixed, the binomial moments remain closed-form but involve the hypergeometric function \({}_2F_1\), or equivalently a non-terminating series of Beta functions; the decrement matrix stays explicit but is no longer a ratio of polynomials. The full continuous Topp--Leone family, \(m=2\), \(\gamma>0\) arbitrary,
\[
\tilde\nu({\rm d}x)=\theta\cdot2\gamma\,x^{\alpha+\gamma-2}(1-x)(2-x)^{\gamma-1}\,{\rm d}x,
\]
is the first instance; \(m=3,4,\dots\) continue with
\[
\tilde\nu({\rm d}x)=\theta\,m\gamma\,x^{\alpha-1}(1-x)^{m-1}\bigl(1-(1-x)^m\bigr)^{\gamma-1}\,{\rm d}x.
\]
In every case the survival-function form
\[
\tilde\nu({\rm d}x)=\theta\,x^{\alpha-1}\,\frac{1-F_{m,\gamma}(x)}x\,{\rm d}x,\qquad F_{m,\gamma}(x)=\bigl(1-(1-x)^m\bigr)^\gamma,
\]
holds, tying the tournament family directly to the  decrement matrix of Section~\ref{sec:regenerative}.

\subsection{Reciprocal-integer shape: \(\gamma=\theta=1/k\)}
\label{sec:reciprocal}

Return to the \((\gamma,\theta)\) stick-breaking decrement matrix of Section~\ref{sec:power-product},
\[
q_{\gamma,\theta}(n:m)=\binom nm\,\frac{(\gamma)_m(\theta)_{n-m}}{(\gamma+\theta)_n-(\theta)_n},
\]
and set \(\gamma=\theta=1/k\) for an integer \(k\ge1\). Since \(\Gamma(1/k)\) is not an elementary constant for \(k\ge3\), there is no reason to expect \(q_{1/k,1/k}(n:m)\) to be rational; it is.

\begin{proposition}
\label{prop:reciprocal}
For every integer \(k\ge1\) and every \(1\le m\le n\),
\[
q_{1/k,1/k}(n:m)=\binom nm\cdot\frac{\displaystyle\prod_{i=1}^{m-1}(1+ik)\,\prod_{j=1}^{n-m-1}(1+jk)}{\displaystyle\prod_{i=0}^{n-1}(2+ik)-\prod_{i=0}^{n-1}(1+ik)}\,,
\]
with the convention that an empty product equals \(1\); in particular the formula is a ratio of integer polynomials in \(k\), hence rational at every integer \(k\).
\end{proposition}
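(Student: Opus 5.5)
The plan is a direct substitution into the \((\gamma,\theta)\) decrement matrix of Section~\ref{sec:power-product},
\[
q_{\gamma,\theta}(n:m)=\binom nm\,\frac{(\gamma)_m(\theta)_{n-m}}{(\gamma+\theta)_n-(\theta)_n},
\]
at \(\gamma=\theta=1/k\). Every Pochhammer symbol is then rewritten as a power of \(k^{-1}\) times an integer polynomial in \(k\), and the powers of \(k\) cancel between numerator and denominator. No probabilistic input is needed beyond the formula itself; the statement is purely algebraic.

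The key identity is, for a rational \(a/k\) with \(a\in\{1,2\}\) and any \(p\ge0\),
\[
\Bigl(\frac ak\Bigr)_p=\prod_{i=0}^{p-1}\Bigl(\frac ak+i\Bigr)=k^{-p}\prod_{i=0}^{p-1}(a+ik).
\]
Applying it to the four symbols:
\begin{itemize}
\item \((1/k)_m=k^{-m}\prod_{i=1}^{m-1}(1+ik)\), since the \(i=0\) factor equals \(1\);
\item \((1/k)_{n-m}=k^{-(n-m)}\prod_{j=1}^{n-m-1}(1+jk)\);
\item \((2/k)_n=k^{-n}\prod_{i=0}^{n-1}(2+ik)\), using \(\gamma+\theta=2/k\);
\item \((1/k)_n=k^{-n}\prod_{i=0}^{n-1}(1+ik)\).
\end{itemize}
The numerator thus carries \(k^{-m}k^{-(n-m)}=k^{-n}\), and the denominator is \(k^{-n}\) times \(\prod(2+ik)-\prod(1+ik)\). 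Cancelling \(k^{-n}\) gives the displayed formula.

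The only step needing care is the boundary bookkeeping.
\begin{itemize}
\item When \(m=n\), \((\theta)_0=1\), and the product \(\prod_{j=1}^{-1}\) must be read as empty.
\item When \(m=1\), \(\prod_{i=1}^{0}\) is empty, consistent with \((1/k)_1=1/k\).
\end{itemize}
I will check these against the stated empty-product convention.

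For the final clause, every factor \(1+ik\) and \(2+ik\) is an integer polynomial in \(k\), so the expression is a ratio of integer polynomials. The denominator is nonzero for every integer \(k\ge1\): factor by factor \(2+ik>1+ik>0\), hence \(\prod_{i=0}^{n-1}(2+ik)>\prod_{i=0}^{n-1}(1+ik)\). The value is therefore rational at every integer \(k\).

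As a sanity check I will compare with \(k=1\). There \(\gamma=\theta=1\), and the formula should reduce to \(\binom nm m!(n-m)!/\bigl((n+1)!-n!\bigr)=1/n\), which is the uniform first-part law expected from the matched-rate case.

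I expect no genuine obstacle. The only pitfall is an off-by-one in the product ranges coming from the \(i=0\) factor \(1+0\cdot k=1\).
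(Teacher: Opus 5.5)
Your proposal is correct and follows essentially the same route as the paper: write each Pochhammer symbol at $1/k$ or $2/k$ as $k^{-p}$ times an integer polynomial in $k$, then cancel the common factor $k^{-n}$ between numerator and denominator. Your extra checks are correct and go slightly beyond the paper's proof: the empty-product boundary cases, the strict inequality $\prod(2+ik)>\prod(1+ik)$ showing the denominator is nonzero, and the $k=1$ reduction to $1/n$.
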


\begin{proof}
Unrolling the Pochhammer symbol at \(\gamma=1/k\) separates a trivial leading factor from the rest:
\[
(1/k)_m=\prod_{r=0}^{m-1}\Bigl(\frac1k+r\Bigr)=\frac1{k^m}\prod_{r=0}^{m-1}(1+rk)=\frac1{k^m}\prod_{i=1}^{m-1}(1+ik),
\]
since the \(r=0\) term equals \(1\) identically. The same unrolling applies to \((1/k)_{n-m}\), \((2/k)_n\), and \((1/k)_n\) (the last two without the special \(r=0\) cancellation, since their products run over all of \(i=0,\ldots,n-1\) starting from \(2/k\) and \(1/k\) respectively). Substituting into \(q_{1/k,1/k}(n:m)\), every factor of \(k^{-1}\) is common to numerator and denominator and cancels, leaving the stated expression.
\end{proof}

The transcendence of \(\Gamma(1/k)\) never enters: the identity is proved entirely inside the ring of polynomials in \(k\), with the passage through Gamma functions serving only as motivation. At \(k=1\) every factor \((1+ik)\) becomes \((1+i)\) and the formula collapses to the uniform law \(q_{1,1}(n:m)=1/n\) of Eq.~\eqref{eq:uniform}; at \(k=2\) it reduces, after simplification, to the central-binomial form of Section~\ref{sec:power-product}. No closed form of this kind was found for general \((\gamma,\theta)\) away from this reciprocal-integer family; the mechanism appears specific to \(\gamma=\theta\).

\begin{corollary}[Predictive rule]
\label{cor:reciprocal-predictive}
Under \(\gamma=\theta=1/k\), given that the currently open block has already absorbed \(c\) of the \(n\) points tested against it, the next point extends the same block with probability
\[
\PP[\text{extend}\mid c,n]=\frac{1+ck}{2+nk},
\]
and escapes to test the next slot with the complementary probability \(\bigl(1+(n-c)k\bigr)/(2+nk)\). This is the posterior mean of a \({\rm B}(1/k,1/k)\) prior updated by \(c\) successes in \(n\) Bernoulli trials, and reduces at \(k=1\) to the classical add-one rule \((1+c)/(2+n)\).
\end{corollary}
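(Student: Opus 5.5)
The plan is to derive the rule from Beta--binomial conjugacy for the single break that defines the currently open block, and only afterwards specialise to \(\gamma=\theta=1/k\). In the \((\gamma,\theta)\) stick-breaking of Section~\ref{sec:power-product}, the block currently being filled corresponds to one factor \(W\stackrel{\rm d}{=}{\rm B}(\gamma,\theta)\). Given \(W\), each future point tested against this slot is captured independently with probability \(W\); otherwise it escapes to the next slot. The \(n\) points already tested, of which \(c\) were captured, are therefore an exchangeable Bernoulli sequence directed by \(W\).

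The key step is to write the probability of any specific ordered outcome with \(c\) captures among \(n\) trials as a Beta integral:
\[
\EE\bigl[W^c(1-W)^{n-c}\bigr]=\frac{(\gamma)_c(\theta)_{n-c}}{(\gamma+\theta)_n}.
\]
This is exactly the summand underlying \(q_{\gamma,\theta}(n:m)\) before the binomial coefficient and the empty-cell correction are applied. The predictive probability of one more capture is the ratio of the corresponding expressions at \((c+1,n+1)\) and \((c,n)\). The Pochhammer symbols telescope to
\[
\frac{\gamma+c}{\gamma+\theta+n},
\]
which is the posterior mean of \(W\) given the data. Substituting \(\gamma=\theta=1/k\) and multiplying the numerator and the denominator by \(k\) then gives \((1+ck)/(2+nk)\). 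The escape probability is the complement,
\[
\frac{\theta+n-c}{\gamma+\theta+n}=\frac{1+(n-c)k}{2+nk}.
\]
At \(k=1\) both expressions reduce to the add-one rule of Laplace, consistent with \eqref{eq:uniform} and with the \(k=1\) case of Proposition~\ref{prop:reciprocal}.

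The one point needing care, and the step I expect to be the main obstacle, is reconciling this unconditioned computation with the conditioning on nonemptiness built into \(q_{\gamma,\theta}\) through the subtracted term \((\theta)_n\). I would treat the two cases separately.
\begin{itemize}
\item When \(c\ge1\), the block is already known to be nonempty. The event that it is nonempty then has conditional probability one and drops out of the ratio, so the predictive rule is exactly the Beta--binomial one above.
\item When \(c=0\), the phrase ``the currently open block'' refers to the slot being tested. Its prior is the unconditioned \({\rm B}(\gamma,\theta)\) law, because silent empty breaks are simply passed over and the next slot carries a fresh independent \({\rm B}(\gamma,\theta)\) factor. Hence no renormalisation by \((\gamma+\theta)_n-(\theta)_n\) enters the sequential rule either.
\end{itemize}
Finally, as a consistency check, I would verify that multiplying these sequential extend/escape probabilities along a path reproduces \(q_{1/k,1/k}(n:m)\) of Proposition~\ref{prop:reciprocal} once the empty-cell mass is removed.
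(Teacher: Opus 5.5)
Your argument is correct and matches the paper's own route. The paper states the corollary without a separate proof; its justification is exactly the Beta--binomial conjugacy you use, with the predictive probability obtained as the ratio $(\gamma+c)/(\gamma+\theta+n)$ of consecutive path weights $(\gamma)_c(\theta)_{n-c}/(\gamma+\theta)_n$ and then specialised to $\gamma=\theta=1/k$. Your separate check that the nonemptiness normalisation $(\gamma+\theta)_n-(\theta)_n$ in $q_{\gamma,\theta}$ leaves the sequential rule unaffected is a point the paper leaves implicit, and you handle it correctly.
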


\begin{remark}[A superficially similar rational case that is a different family]
\label{rem:gnedin-yakubovich}
Gnedin and Yakubovich \cite{gnedin2006recursive} study a genuinely different generalisation of stick-breaking: not a one-sided splitting but a true fragmentation, tree-indexed rather than line-indexed. In the Bessel-bridge instance of their construction a crumb splits into bridge--excursion--bridge, and \emph{both} bridge pieces continue splitting independently at every subsequent generation by an independent copy of the same rule, so that generation \(k\) carries \(2^k\) live crumbs; the count of solids produced this way grows polynomially in \(n\) with a Malthusian exponent \(\alpha^*\) governed by Crump--Mode--Jagers branching theory, solving a transcendental equation \(\psi(\alpha)=1\). One split is a genuinely bivariate (Dirichlet-distributed) draw, but the process across generations is supercritical branching, not any fixed-dimensional subordinator. They isolate a rational case of their own: when the Dirichlet parameters sum to an integer \(r\), \(\psi\) is a rational function and the exponent equation becomes polynomial of degree \(r\) in \(\alpha\).

The comparison to a one-sided family such as ours is meaningful only because of a nontrivial reduction they prove: collecting the solids by \emph{size-biased} pick, rather than in their raw genealogical (tree) order, collapses the branching structure exactly onto ordinary stick-breaking (their Lemma~9 and Proposition~8). Only after this reduction does a same-color probability \(p_{\rm GY}(n)\) directly comparable to our \(q_{\gamma,\theta}\) exist at all. At \(r=1\) it reads
\[
p_{\rm GY}(n)=\frac{(1-\gamma)_n}{(1+\gamma)_n-2(\gamma)_n}\,.
\]
This telescopes exactly, for every \(n\) and every \(\gamma\), to
\[
p_{\rm GY}(n)=\frac{(1-\gamma)_{n-1}}{(1+\gamma)_{n-1}}\,,
\]
via the three shift identities \((1+\gamma)_n=(\gamma)_{n+1}/\gamma\), \((1-\gamma)_n=(1-\gamma)_{n-1}(n-\gamma)\), and \((\gamma)_n=\gamma(1+\gamma)_{n-1}\) -- which is exactly the classical Ewens--Pitman same-color probability \(p_{\alpha,\theta}(n)=(1-\alpha)_{n-1}/(1+\theta)_{n-1}\) at \(\alpha=\theta=\gamma\). Their rational case, once unwound from a branching process to a line-indexed one, is therefore the ordinary Ewens--Pitman family, which grows benchmark weight with \emph{position}; our \(\gamma=\theta=1/k\) family is i.i.d.\ across position throughout, and was never tree-indexed to begin with. The two rational cases share a mechanism -- an integer relation among Dirichlet or Beta parameters collapsing a ratio of Gamma functions to something elementary -- without being instances of one another, and arise from constructions (fragmentation versus one-sided splitting) that are not directly comparable except after their size-biased reduction.
\end{remark}

\section{Chain records, cones, and general choice operators}
\label{sec:chain-records}

Every construction so far has taken the choice operator {\rm C} to retain minima
of intervals on a single strength axis. We now let {\rm C} act by
\emph{coordinatewise domination} in \(\RR^d\), or more generally by domination
in an arbitrary ordered probability space, and show that this single change of
geometry both explains the pure-power law of Section~\ref{sec:three-param} from
first principles and supplies the missing operational answer to when the
choice operator should stop trusting its current benchmark.

\subsection{Order types of the induced comparison order}
\label{sec:order-types}

Return to the single-axis race of Section~\ref{sec:race}, now allowing the
weight sequence \(w=(w_i)_{i\in I}\) on a countable ground set to be arbitrary,
not merely summable. Ranking the strengths \(X_i\stackrel{\rm d}=\Exp(w_i)\)
defines a random linear order \(i\triangleleft j\iff X_i<X_j\) on \(I\); this
is the size-biased order of \cite{GnedinSizeBiased2023}, already invoked in
Section~\ref{sec:intro}. Writing \(\mu(x,y)=\sum_i(e^{-w_ix}-e^{-w_iy})\) with
density \(\varphi(x)=\sum_iw_ie^{-w_ix}\) for \(x\) beyond the convergence
abscissa \(\beta\) of the underlying Dirichlet series, the order type of
\(\triangleleft\) is classified completely by the summability and
accumulation behaviour of \(w\):
\begin{itemize}[leftmargin=*]
\item \(w_i\to0\), summable (\(\Lambda:=\sum_iw_i<\infty\)): type \(\ZZ_{>0}\)
-- the regime of Sections~\ref{sec:one-benchmark}--\ref{sec:three-param};
\item \(w_i\to0\), non-summable, or \((w_i)\) with an accumulation point in
\((0,\infty)\): type \(\mathbb Q\), dense on all of \((0,\infty)\) in the
latter case even though the accumulation is local
;
\item \(w_i\to\infty\) with \(\beta:=\limsup_i(\log i)/w_i=0\): type
\(\ZZ_{<0}\), a left-sided accumulation of benchmarks at the origin with no
least element.
\end{itemize}
The last case is the precise sense in which a divergent weight sequence still
produces a coherent hierarchy of benchmarks: infinitely many past strengths
pile up at \(0\) without a minimum, so every finite future point is beaten by
cofinitely many of them, while only finitely many benchmarks exceed any fixed
positive threshold. It is this regime, not the summable one, that the chain
record mechanism below produces canonically.

\subsection{Chain records and the Mellin transform}
\label{sec:chain-mellin}

Let the past population be re-sampled as i.i.d.\ marks \(Y_1,Y_2,\dots\) in
the cube \([0,1]^d\) rather than on a single strength axis, and let {\rm C}
retain the \emph{chain records}: \(Y_n\) is retained if it is dominated
coordinatewise by every previously retained mark. Write \(R_1,R_2,\dots\) for
the successive chain records and \(H_k=h(R_k)\) for the Lebesgue measure of
the box they dominate.

\begin{theorem}[Multiplicative renewal, {\cite{GnedinChainRecords2007}}]
\label{thm:chain-renewal}
\((H_k)_{k\ge1}\stackrel{\rm d}{=}(W_1\cdots W_k)_{k\ge1}\) for i.i.d.\ copies
\(W_j\) of \(H_1\), with
\[
\PP[W\in{\rm d}s]=\frac{(-\log s)^{d-1}}{(d-1)!}\,{\rm d}s,\qquad s\in[0,1],
\qquad g(\lambda):=\EE[W^\lambda]=(\lambda+1)^{-d}.
\]
Given \((H_k)\), sojourns between records are conditionally independent
geometric with parameter \(H_k\) -- the discrete analogue of the memoryless
step of Theorem~\ref{thm:spacings} -- and the law of record occurrences given
\((H_k)\) is the \emph{same as the one-dimensional case, for every \(d\)}.
\end{theorem}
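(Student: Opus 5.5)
The proof rests on the scaling invariance of the uniform law on the cube under coordinatewise dilation. Retention is decided only by domination of the current record, so after $R_k$ the next record is the first later mark falling in the box $[0,R_k]=\prod_i[0,R_{k,i}]$. I would carry out the following steps in order.

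\emph{Law of $H_1$.} The first mark $Y_1$ is the first chain record (vacuously dominated), so $R_1=Y_1$ is uniform on $[0,1]^d$ and $H_1=\prod_{i=1}^d Y_{1,i}$. Each $-\log Y_{1,i}$ is $\Exp(1)$, so $-\log H_1$ is a sum of $d$ independent $\Exp(1)$ variables, i.e. Gamma$(d,1)$. Changing variables gives the density $(-\log s)^{d-1}/(d-1)!$ on $[0,1]$, and $\EE[H_1^\lambda]=\EE[e^{-\lambda\,\mathrm{Gamma}(d,1)}]=(1+\lambda)^{-d}$.

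\emph{Strong Markov step.} Condition on $R_k=r$ and on the index $n_k$ at which this record occurred. The later marks $Y_{n_k+1},Y_{n_k+2},\dots$ are i.i.d. uniform and independent of the past. Each lands in the box $[0,r]$ independently with probability $h(r)=H_k$, so the waiting time until the next record is geometric with parameter $H_k$. Conditionally on landing there, a mark is uniform on $[0,r]$. Writing the next record as $R_{k+1}=r\odot Y'$ (coordinatewise product) with $Y'$ uniform on $[0,1]^d$ and independent of everything before, we get $H_{k+1}=H_k\,h(Y')$, where $h(Y')$ is a fresh copy of $H_1$. Marks falling outside the box are discarded and carry no information about $Y'$.

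\emph{Conclusion.} Induction on $k$ then gives $H_k\stackrel{\rm d}{=}W_1\cdots W_k$ jointly in $k$. It also shows that, given $(H_k)$, the sojourns are independent geometric$(H_k)$, because the retention events are i.i.d. Bernoulli$(H_k)$ trials independent of where the retained point lands inside the box. In dimension one the same description holds with $H_k=R_k$ (lower records of uniforms). Hence, given $(H_k)$, the record-occurrence process is built from identical geometric sojourns for every $d$.

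\emph{Main obstacle.} The one point needing care is the conditional-independence claim: the index at which a record occurs must not bias the position of the new record inside the box. I would handle this by a thinning decomposition. Generate each future mark as a Bernoulli$(H_k)$ indicator together with an independent uniform point inside the box and an independent uniform point outside it. Then the record time depends only on the indicators, while $Y'$ depends only on the inside points.
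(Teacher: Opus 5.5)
Your proof is correct and takes essentially the approach the paper relies on. The paper itself gives no proof here; it cites \cite{GnedinChainRecords2007} and points to lower-homogeneity in Section~\ref{sec:cones}. Your argument is that uniform-on-$[0,R_k]$ is the coordinatewise rescaling of uniform on the cube, so $H_{k+1}=H_k\,h(Y')$ with a fresh uniform $Y'$, and the thinning decomposition then gives conditionally independent geometric$(H_k)$ sojourns. One small correction of wording: $Y'$ is the inside point at the first success index, so it does depend on the indicators; it is nonetheless uniform and independent of them, because the inside points are i.i.d.\ and independent of the indicator sequence.
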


At \(d=1\), \(g(\lambda)=(\lambda+1)^{-1}\), and \(H_1\stackrel{\rm d}=W\sim
{\rm B}(1,1)\) is exactly \(V_1=1-e^{-\beta_1}\) of Proposition~\ref{prop:gem}
with \(\theta=1\): the single-slot benchmark race of Section~\ref{sec:ewens}
is the memoryless, deterministic-residual-rate special case of chain records.
General \(d\) replaces the deterministic residual weight \(\theta\) by a
genuinely random, history-dependent one, at the price of losing the
elementary closed form of Theorem~\ref{thm:esf} beyond \(d=1\).

\begin{theorem}[Universal logarithmic growth]
\label{thm:log-growth}
With \(\mu=\EE[-\log W]\), \(\sigma^2=\operatorname{Var}[-\log W]<\infty\) and
\(K_n:=\max\{k:H_k>1/n\}\),
\[
K_n\sim\frac{\log n}\mu\ \text{a.s.},\qquad \EE K_n\sim\frac{\log n}\mu,\qquad
\operatorname{Var}K_n\sim\frac{\sigma^2}{\mu^3}\log n,
\]
and \(K_n\) is asymptotically Gaussian.
\end{theorem}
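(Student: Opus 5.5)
The plan is to reduce $K_n$ to a first-passage (renewal counting) functional of the random walk $S_k:=-\log H_k=\sum_{j\le k}(-\log W_j)$. By Theorem~\ref{thm:chain-renewal} this walk has i.i.d.\ increments $\xi_j=-\log W_j\ge0$ with law $\PP[\xi\in{\rm d}t]=t^{d-1}e^{-t}/(d-1)!\,{\rm d}t$, i.e.\ $\xi\sim\mathrm{Gamma}(d,1)$. Hence $\mu=d$ and $\sigma^2=d$, and both are finite, so the moment hypotheses hold automatically; they can also be read off from $g(\lambda)=(\lambda+1)^{-d}$ by differentiating $\log g$ at $0$. The event $\{H_k>1/n\}$ is exactly $\{S_k<\log n\}$. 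Since $S_k$ is strictly increasing ($\xi>0$ a.s.), we get
\[
K_n=\max\{k:S_k<\log n\}=N(\log n)-1,\qquad N(t):=\min\{k:S_k\ge t\},
\]
so $K_n$ is the renewal counting process of $(S_k)$ evaluated at the deterministic level $t=\log n$.

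With this reduction, the three asymptotic claims follow from classical renewal theory applied at $t=\log n\to\infty$:
\begin{itemize}
\item The strong law for renewal processes gives $N(t)/t\to1/\mu$ a.s. Since $\log n$ is a deterministic sequence tending to infinity, this yields $K_n\sim\log n/\mu$ a.s.
\item The elementary renewal theorem gives $\EE N(t)\sim t/\mu$. In fact here $N(t)-1$ is exactly Poisson-type for $d=1$, and a renewal function in general.
\item The renewal CLT (Anscombe/Feller) gives $(N(t)-t/\mu)/\sqrt{\sigma^2t/\mu^3}\Rightarrow\mathcal N(0,1)$.
\item The variance statement $\operatorname{Var}N(t)\sim\sigma^2t/\mu^3$ requires convergence of second moments, not just of distributions.
\end{itemize}

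The variance step is the only point that needs care. I would handle it through uniform integrability of $\bigl((N(t)-t/\mu)^2/t\bigr)_t$. One route is the standard bound that $N(t)$ has exponential moments uniformly in $t$ (Lorden/Gut). Here this is trivially true because the increments are Gamma: $N(t)-1$ is dominated by a Poisson-type count, since $S_k\ge$ a sum of $\mathrm{Exp}(1)$ variables. Alternatively, the Laplace transform of the renewal function can be written explicitly via $g$: $\sum_k g(\lambda)^k=1/(1-(\lambda+1)^{-d})$, and a Tauberian expansion near $\lambda=0$ gives the first two moments with the constants $1/\mu$ and $\sigma^2/\mu^3$. Either route closes the argument. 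As a cross-check, I would verify that the constants specialise at $d=1$ to the harmonic-number asymptotics of the number of ordinary records, mean $\sim\log n$ and variance $\sim\log n$.
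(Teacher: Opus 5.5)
Your reduction \(K_n=N(\log n)-1\) is correct, and it is a more direct route than the paper's. The paper gives no standalone argument. It identifies the statement with the ``Number of blocks'' theorem of Section~\ref{sec:constrained} at \(\rho\equiv1\). That theorem concerns the number of \emph{occupied cells of a future sample}: its proof squeezes this count between the renewal count of \(-\log W_k\) below \(\log n\) and a perturbation from sample points near the current threshold, then closes the gap by a large-deviation estimate. Here \(K_n=\max\{k:H_k>1/n\}\) is defined directly from the hierarchy, so it is that renewal count itself, and only the renewal half of the paper's argument is needed. Your use of the renewal SLLN, the elementary renewal theorem and the renewal CLT then settles the a.s., mean and Gaussian claims for any i.i.d.\ \(\xi=-\log W\ge0\) with \(\mu>0\) and \(\sigma^2<\infty\). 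This is the generality the statement intends, not only the Gamma\((d,1)\) case. The paper's route, in exchange, covers the sample-based count, which is the observable quantity.

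The step that does not work as written is the variance.
\begin{itemize}
\item The coupling \(N(t)-1\le\Pi(t)\), with \(\Pi(t)\sim\mathrm{Poisson}(t)\), bounds only the upper tail, and at the wrong centre (mean \(t\) rather than \(t/d\)). For \(d\ge2\) it gives no control of fluctuations of order \(\sqrt t\) around \(t/\mu\), so it cannot yield uniform integrability of \((N(t)-t/\mu)^2/t\).
\item \(N(t)\) has no exponential moments bounded uniformly in \(t\), since \(\EE N(t)\to\infty\).
\item A Tauberian theorem delivers leading-order terms only. It cannot isolate the \(O(t)\) term of \(\EE N(t)^2\) that survives after subtracting \((\EE N(t))^2\).
\end{itemize}

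There are two clean repairs.
\begin{itemize}
\item In general, invoke the classical fact \(\operatorname{Var}N(t)=\sigma^2t/\mu^3+o(t)\) under \(\sigma^2<\infty\). It follows from Wald's second identity \(\EE(S_{N(t)}-\mu N(t))^2=\sigma^2\EE N(t)\), together with \(\EE R_t=O(1)\) and \(\EE R_t^2=o(t)\) for the overshoot \(R_t=S_{N(t)}-t\).
\item In the chain-record case, sharpen your coupling to an identity. Write each \(\xi_j\) as a sum of \(d\) consecutive unit exponentials. Then \(S_k=\Gamma_{kd}\), the \(kd\)-th point of a unit-rate Poisson process \(\Pi\), and hence \(K_n=\lfloor\Pi(\log n)/d\rfloor\) exactly, for every \(d\). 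The a.s.\ limit, the mean \(\log n/d+O(1)\), the variance \(\log n/d^2+O(\sqrt{\log n})\) and asymptotic Gaussianity are then immediate, consistent with \(\sigma^2/\mu^3=1/d^2\).
\end{itemize}
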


This represents the constrained-hierarchy CLT already proved as the
``Number of blocks'' theorem of Section~\ref{sec:constrained}, specialised to
the i.i.d.\ case \(\rho\equiv1\); here \(\mu=\sigma^2=d\) exactly, since
\(-\log W\sim\operatorname{Gamma}(d,1)\). Thus \emph{any} log-type weight
family arising as a multiplicative renewal with finite log-moments obeys the
same \(\log n\) growth law, independent of the fine shape of \(W\) -- a
genuine universality that removes the need to hand-fit a decaying weight
sequence \(w_i=c(\log i)^p\) to a target growth rate for the number of active
benchmarks \(K_n\): dimension \(d\) (or, more generally, any renewal law
\(W\)) supplies it directly.

Poissonising the record process and solving for the moments
\(m_\beta(t)=\EE[B_t^\beta]\) of the current record height gives
$$m_\beta(t)=\sum_{k\ge0}\frac{(-t)^k}{k!}\prod_{j=0}^{k-1}\bigl(1-g(j+\beta)\bigr);$$
depoissonising at \(\beta=1\) yields the exact record probability
\begin{equation}
\label{eq:chain-pn}
p_n=\sum_{k=0}^{n-1}\binom{n-1}k(-1)^k\prod_{j=0}^{k-1}\bigl(1-g(j+1)\bigr).
\end{equation}
At \(d=1\), \(p_n=1/n\) (classical records, matching
\(\PP[A_1=n)=1/(n+1)\)-type single-slot computations up to reindexing); at
\(d=2\), \(p_n=1/(2n)\) for \(n>1\), a second exact rational case; for \(d>2\)
\eqref{eq:chain-pn} becomes a generalised \({}_dF_d\) hypergeometric series in
\(t\), with \(p_n\sim1/(dn)\) throughout.

\subsection{General cones and lower-homogeneous spaces}
\label{sec:cones}

Coordinatewise domination is domination by the positive orthant. Replace it
by an arbitrary punctured cone \(K\subset\RR^d\): the choice operator retains
\(Y_n\) whenever it lies in the \(k\)th layer
\(\mathcal L^{(k)}=\{Y_i:\#(K_{Y_i}\cap\{Y_1,\dots,Y_n\})=k-1\}\), \(K_x=x+K\).
The first layer with \(K\) the positive orthant is exactly the set of
Pareto-optimal points -- the south-east record example of
Section~\ref{sec:setup} is the case \(d=2\), \(K\) a quarter-plane -- and in
\cite{GnedinConicalExtremes1994} it is shown that, for sampling
distributions with asymptotically independent radial and angular parts and a
regularly varying radial tail, the layer counts \(V_n^{(k)}\) obey power laws
governed jointly by the tail index and by the opening angle of \(K\): cone
shape is a second free parameter, orthogonal to \(d\), controlling how often
domination occurs.

More generally, call an ordered probability space \((\mathcal Z,\mu,\prec)\)
\emph{lower-homogeneous} if the lower section \(L_v=\{u:u\prec v\}\), with
conditional measure \(\mu(\cdot)/\mu(L_v)\), is isomorphic to
\((\mathcal Z,\mu,\prec)\) itself. Records under \(\prec\) then undergo the
renewal of Theorem~\ref{thm:chain-renewal} verbatim, with factor
\(X\stackrel{\rm d}=\mu(L_V)\), \(V\sim\mu\) \cite{GnedinLastRecord2006}. The
cube with coordinatewise order is one instance; another is:

\begin{example}[Interval space]
Let \(\mathcal Z=\{\,]a,b[\,\subset[0,1]\}\), \(\prec\) inclusion, and
$$(\mu({\rm d}a\,{\rm d}b)=\alpha(\alpha-1)(b-a)^\alpha\,{\rm d}a\,{\rm d}b$$
for \(\alpha>1\). Then \(\PP[X\in{\rm d}x]/{\rm d}x=(\alpha-1)(x^{-1/\alpha}-1)\),
a one-parameter power-law family distinct from the \(\log\)-power families
above, arising from a domination geometry rather than by curve-fitting.
\end{example}
Every such \(X\) satisfies \(\PP[X\le x]\ge x\) for \(x\in[0,1]\), a necessary
compatibility condition inherited from lower-homogeneity that any candidate
weight family built this way must satisfy.

\subsection{The choice operator as a self-similar process}
\label{sec:stopping}

The benchmark ladder of Section~\ref{sec:chain-mellin} is the jump chain of a
continuous-time self-similar Markov process \(R=(R_t)_{t\ge0}\): from current
benchmark \(r\), jump at rate \(r\) to \(rX\), so that
\((R_t\mid R_0=r)\stackrel{\rm d}=(rR_{rt}\mid R_0=1)\). Unwound, this is once
more the exponential race, with \(r_0X_1\cdots X_{k-1}\) playing the role of
the residual weight \(W_r(\sigma)\) of Theorem~\ref{thm:spacings}. This self-similar representation is a genuine, if narrow, connection to the surrounding chain-record material; the associated theory of when to stop the process before a horizon \(T\) \cite{GnedinLastRecord2006} belongs to optimal-stopping rather than to partition or sampling theory and is not developed here.

\begin{remark}[The beta case closes the loop]
For \(X\sim{\rm B}(\theta,1)\) -- exactly the \(V_j\) of
Proposition~\ref{prop:gem} -- both the
range of \(R\) and the point process of record times are Poisson with
intensity \(\theta\,{\rm d}z/z\), the same self-similar Poisson process
underlying the \(\GEM(\theta)\) construction of Section~\ref{sec:ewens}. Every cone or lower-homogeneous family of
Section~\ref{sec:cones} carries the same self-similar structure verbatim, with \(X\)
replaced by that family's induced stick-breaking factor.
\end{remark}

\section{Algebraic aspects: relation to Vershik--Lodkin theory}
\label{sec:vershik-lodkin}

The planar PPP carries two classical graded graphs at once, according to which coordinate of an atom \((t,x)\) we emphasize and which stays concomitant: the Pascal graph of Section~\ref{sec:prelim}, and its factorial-graph counterpart. This section records only the algebraic content of the correspondence; the two graphs themselves, and the adic transformation identifying the choice-operator sweep of Section~\ref{sec:ewens} with a \(\theta\)-weighted deformation of the classical Pascal automorphism, are not re-derived here.

Proposition~\ref{prop:matched} (matched-rate benchmarks) is exactly the finite level-\(k\) occupation law of the Pascal graph at \(k=1\); \cite{VershikEquippedGraphs2015} develops the \((k+1)\)-colour generalisation, in the language of \emph{equipped graded graphs} and projective limits of simplices. The complementary, factorial interpretation is organised instead by rank: the factorial code of a permutation \cite{Laisant1888} records at each step which of the \(n\) available positions a new element occupies, underlying the insertion apparatus of Sections~\ref{sec:insertion}--\ref{sec:record-model}, with the central-measure classification restricted to the sufficient statistic \((\ell,u)\) by Proposition~\ref{prop:record-family} and refined to the full record set by \cite{GnedinBiasRecord}. A different, cycle-based projective system -- deleting the largest element from a permutation's cycle decomposition rather than its sequence of positions -- gives the space of \emph{virtual permutations} \cite{KerovOlshanskiVershik1993}, with central measures classified by \cite{Tsilevich1999}; the Ewens measure, for every \(\theta\ge0\), is the distinguished point of this classification, matching the \(\GEM(\theta)\) construction of Section~\ref{sec:ewens}.

These two structures are not merely analogous: they are the two projections of one bivariate object, already recorded algebraically in Section~\ref{sec:algebra}. The monomial basis \(M_\mu\), adapted to occupancy \(\mu\), is the Pascal-graph interpretation; the fundamental (Gessel) basis \(F_W\), adapted to tail counts \(W\), is the factorial-graph interpretation, since a tail/comparison count is the datum a permutation's record or rank structure carries. Theorem~\ref{thm:general} (the composition law, summed over \(\sigma\in\mathfrak S_k\)) is the statement that a path in the joint bivariate structure projects two ways: forgetting discovery order gives the occupation vector \(\mu\), forgetting magnitude gives the discovery order \(\sigma\) itself. Lemma~\ref{lem:DI} -- the Dirichlet integral over the single ordered chamber \(\Delta_k\) -- is the ``forget order, keep magnitude'' direction; summing over all \(k!\) chambers, as in Theorem~\ref{thm:general}, is the ``forget magnitude, keep order'' direction. The monomial/fundamental duality of quasisymmetric functions is thus the algebraic shadow of the Pascal-graph/factorial-graph duality.

Both bases carry, beyond their finite (probability-measure) boundary theory, a genuinely unbounded extension. An infinite Lévy measure is a semifinite measure: it assigns rates rather than probabilities, finite on most events but unbounded on some. Harmonic functions of this semifinite kind, on the graphs of ordered partitions built from the monomial and fundamental bases above, are classified in \cite{Safonkin2021GK,Safonkin2021Zigzag}, extending the finite (probability) classification of \cite{GnedinRepresentationofCompositionStructures,gnedin2006coherent} to representations of type \(I_\infty\) and \(II_\infty\), not \(II_1\), of the associated AF-algebra. Such semifinite rates are a construction tool, not only a classification target: attaching an independent rate to each side of a hierarchy rather than one scalar builds the two-sided stick-breaking of the \((\theta,\zeta)\) record family of Section~\ref{sec:record-model} directly, and the same device extended coordinatewise builds a multivariate stick-breaking beyond it, not otherwise obtained from a single choice operator.

\end{document}